\title[{}]{Center of the affine $\gl_{n|1}$ at the critical level and pseudo-differential operators}
\author{Dražen Adamović}
\address[D.A.]{Faculty of Science, Department of Mathematics, University of Zagreb, Bijenicka 30, 10 000 Zagreb, Croatia}
\email{adamovic@math.hr}
\author{Boris Feigin}
\address[B.F]{Hebrew University of Jerusalem, Einstein Institute of Mathematics, Givat Ram. Jerusalem, 9190401, Israel}
\email{borfeigin@gmail.com}
\author{Shigenori Nakatsuka}
\address[S.N.]{Department Mathematik, FAU Erlangen–Nürnberg, Cauerstraße 11, 91058, Erlangen, Germany}
\email{shigenori.nakatsuka@fau.de}
\definecolor{rouge}{rgb}{0.85,0.1,.4}
\definecolor{bleu}{rgb}{0.1,0.2,0.9}
\definecolor{violet}{rgb}{0.7,0,0.8}
\tikzset{%
  symbol/.style={
    draw=none,
    every to/.append style={
      edge node={node [sloped, allow upside down, auto=false]{$#1$}}
    },
  },
}
\newtheorem{definition}{Definition}[section]
\newtheorem{proposition}[definition]{Proposition}
\newtheorem{theorem}[definition]{Theorem}
\newtheorem{ThmLetter}{Theorem}
\newtheorem{corollary}[definition]{Corollary}
\newtheorem{lemma}[definition]{Lemma}
\newtheorem{remark}[definition]{Remark}
\newtheorem{conjecture}[definition]{Conjecture}
\numberwithin{equation}{section}
\newcommand{\Z}{\mathbb{Z}}
\newcommand{\R}{\mathbb{R}}
\newcommand{\C}{\mathbb{C}}
\newcommand{\rH}{\mathrm{H}}
\newcommand{\End}{\operatorname{End}}
\newcommand{\Com}{\operatorname{Com}}
\newcommand{\Ker}{\operatorname{Ker}}
\renewcommand{\Im}{\operatorname{Im}}
\newcommand{\ch}{\operatorname{ch}}
\newcommand{\str}{\operatorname{str}}
\newcommand{\pd}{\partial}
\newcommand{\gr}{\operatorname{gr}}
\newcommand{\ad}{\operatorname{ad}}
\newcommand{\wt}{\operatorname{wt}}
\newcommand{\A}{\mathbb{A}}
\newcommand{\wun}{\mathbf{1}}
\newcommand{\kk}{\kappa}
\newcommand{\aff}{\mathrm{aff}}
\renewcommand{\geq}{\geqslant}
\renewcommand{\leq}{\leqslant}
\newcommand{\hwt}[1]{\mathrm{e}^{{#1}}}
\newcommand{\HH}{\mathrm{H}}
\newcommand{\DS}{\mathrm{H}_{\mathrm{DS}}}
\newcommand{\D}{\mathcal{D}}
\newcommand{\sA}{\mathcal{A}}
\newcommand{\OO}{\mathbb{O}}
\newcommand{\heis}[1]{\pi_\mathfrak{h}^{#1}}
\newcommand{\w}[1]{\widehat{#1}}
\newcommand{\op}[1]{\check{#1}}
\newcommand{\affWak}[1]{\mathbb{W}^{\kk}_{{#1}}}
\newcommand{\subW}[1]{\W^{#1}(\sll_n,\OO_{\mathrm{sr}})}
\newcommand{\Q}{\mathbb{Q}}
\newcommand{\GL}{\mathrm{GL}}
\newcommand{\g}{\mathfrak{g}}
\newcommand{\h}{\mathfrak{h}}
\newcommand{\gl}{\mathfrak{gl}}
\newcommand{\sll}{\mathfrak{sl}}
\newcommand{\lm}[2]{[#1 {}_\lambda #2]}
\newcommand{\plm}[2]{\{#1 {}_\lambda #2\}}
\newcommand{\cent}{\mathfrak{z}}
\newcommand{\nil}{\mathfrak{n}}
\newcommand{\n}{\mathfrak{n}}
\newcommand{\tori}{\mathfrak{t}}
\newcommand{\tp}{{\scalebox{0.5}{$+$}}}
\newcommand{\tn}{{\scalebox{0.5}{$-$}}}
\newcommand{\tpn}{{\scalebox{0.5}{$\pm$}}}
\newcommand{\dz}{\mathrm{d}z}
\newcommand{\dC}{\mathrm{h}^{\scalebox{0.5}{$\vee$}}}
\newcommand{\res}[1]{\underset{z=0}{\mathrm{Res}} #1\frac{\dz}{z}}
\newcommand{\tmath}[1]{\texorpdfstring{#1}{ABC}}
\newcommand{\dx}{\check{\mathbf{x}}}
\newcommand{\dso}{ {\check{\mathbf{O}}} }
\newcommand{\x}{\mathbf{x}}
\newcommand{\dO}{ {{\mathbf{O}}} }
\newcommand{\sub}{\mathrm{sr}}
\newcommand{\bea}{\begin{eqnarray}}
\newcommand{\eea}{\end{eqnarray}}
\newcommand{\Mod}{\text{-}\mathrm{mod}}
\newcommand{\W}{\mathcal{W}}
\newcommand\doi[2]{\href{http://dx.doi.org/#1}{#2}}
\begin{document}
\maketitle

\begin{abstract}
We prove that the center of the affine Lie algebra $\w{\gl}_{n|1}$ at the critical level is generated by the coefficients in the expansion of the pseudo-differential operator $(\partial_z-u_1(z))\cdots (\partial_z-u_n(z))(\partial_z+u_{n+1}(z))^{-1}$ taking values in the Cartan subalgebra. This is an affine analogue of the Harish-Chandra isomorphism in the finite case.

The key ingredient of the proof is the identification of the center with the Heisenberg coset of the regular $\W$-superalgebra of $\gl_{n|1}$ at the critical level, whose associated graded algebra is realized as the affine supersymmetric polynomials.
Based on this, we derive a character formula for the center, which coincides with the generating function of plane partitions with a pit
condition.
We also prove that the Heisenberg coset at generic levels has a similar interpretation in terms of pseudo-differential operators that deform the one at the critical level.

\end{abstract}


\section{Introduction}\label{Intro}

The description of the center of the universal enveloping algebra
of a finite-dimensional reductive Lie algebra has played a crucial role in the study of representation theory. The Harish-Chandra isomorphism identifies the center
$\cent(\g)$ with the algebra of Weyl group invariants in the symmetric
algebra of a Cartan subalgebra $\h$. 
This provides fundamental insights into the structure of the BGG category,
such as the linkage principle.

The affine analogue was conjectured by Drinfeld and established by one of the authors and E. Frenkel \cite{FF1}. 
More precisely, one considers the affine Lie algebras $\w{\g}_\kk$ at level $\kk$ and studies the center of the completed enveloping algebra $U_\kk(\w{\g})$. The completion reflects the fact that $\w{\g}_\kk$ is a one-dimensional central extension of the loop algebra $\g(\!(z)\!)$ parametrized by the level $\kk$. The center turns out to be non-trivial at one distinguished level, called the critical level $\kk_c$. When $\kk=\kk_c$, the center is identified with the regular functions on the moduli of certain differential operators, called $\check{\g}$-opers, over the punctured disc, where $\check{\g}$ is the Langlands dual of $\g$.
This gives a ``spectral decomposition" of the category of smooth $\w{\g}_{\kk_c}$-representations parametrized by $\check{\g}$-opers, which was one of the earliest results on the geometric Langlands program \cite{F}.

The category of smooth $\w{\g}_{\kk_c}$-modules is canonically identified with the category of modules over the affine vertex algebra $V^{\kk_c}(\g)$. 
Indeed, the formulation of vertex algebras fits well in this context: both $U_{\kk_c}(\w{\g})$ and its center are naturally identified with the enveloping algebras of $V^{\kk_c}(\g)$ and its center $\cent(V^{\kk_c}(\g))$ as vertex algebras. In this formulation, we have the affine analogue of the Harish-Chandra isomorphism
\begin{align*}
    \cent(V^{\kk_c}(\g)) \simeq \W^{\kk_c}(\g)\subset \pi^0_\h,
\end{align*}
where $\pi^0_\h$ is the Heisenberg vertex algebra associated with the Cartan subalgebra $\h$ at level zero, and $\W^{\kk_c}(\g)$ is the subalgebra characterized by the joint kernel of screening operators, the analogue of the Weyl group invariants in the finite setting. 
When $\g=\gl_n$, $\W^{\kk_c}(\gl_n)$ is generated by the coefficients of the expansion of the differential operator 
$$(\partial_z- u_1(z))\cdots (\partial_z-u_{n-1}(z))(\partial_z- u_n(z))$$
or symbolically $(\partial- u_1)\cdots (\partial-u_{n-1})(\partial- u_n)$ in the language of vertex algebras where $u_i$'s are the basis of the Cartan subalgebra $\h$.  
The algebra $\W^{\kk_c}(\g)$ is called \emph{the $\W$-algebra} defined as the BRST reduction of $V^\kk(\g)$
$$\W^{\kk}(\g)=\HH_{\OO_{\mathrm{reg}}}^0(V^\kk(\g))$$
associated with the regular nilpotent orbit $\OO_{\mathrm{reg}}\subset \g$ at the critical level $\kk=\kk_c$.
This is the quantum affine analogue of the Kostant slice \cite{Kos78}, which is naturally isomorphic to the GIT quotient $\g/\!/G$ of the action of the adjoint group $G$ on $\g$. 

The $\W$-algebras enjoy the duality, known as the Feigin-Frenkel duality \cite{FF1}, 
$$\W^{\kk}(\g)\simeq \W^{\check{\kk}}(\check{\g})$$
where the levels $\kk,\check{\kk}$ satisfy a duality relation. 
This is a deformation of the one ar the critical level $(\kk,\check{\kk})=(\kk_c,\infty)$, which recovers the description of $\cent(V^{\kk_c}(\g))$ by $\check{\g}$-opers mentioned above, and plays an important role in the quantum geometric Langlands program e.g. \cite{AF, Gaits}.

While the representation theory of the $\W$-algebras $\W^{\kk}(\g)$ serves as the Whittaker models of $\w{\g}_\kk$-modules, the generalization to $\W$-algebras $\W^{\kk}(\g,\OO)$ associated with general nilpotent orbits $\OO$ are related to degenerate Whittaker models. 
The generalization of the Feigin-Frenkel duality for $\W^{\kk}(\g,\OO)$ or its geometric/categorical analogues has been studied in various contexts of mathematics and physics (e.g. \cite{BFGT, CGN21, CL4, GR}). It turns out that they meet the $\W$-superalgebras $\W^{\kk}(\g,\OO)$ associated with basic-classical Lie \emph{superalgebras}. This motivates revisiting the problem of describing the center $\cent(V^{\kk_c}(\g))$ via differential operators, which has remained open for decades for basic-classical Lie superalgebras. 

\vspace{1ex}
The main results of this paper, below, resolve this problem by using \emph{pseudo}-differential operators and establish related combinatorial results for the affine Lie superalgebras $\w{\gl}_{n|1}$, which are the first nontrivial family in the super case.

\begin{ThmLetter}[Theorem \ref{them: Detecting the FF center}/\ref{THM: FF center for gln1 vis pD op}/\ref{thm: character formula of the center}]\label{thm: A} \hspace{0mm}\\
\begin{enumerate}[leftmargin=*, labelsep=1em]
    \item There is an embedding of vertex algebras 
$$\cent(V^{\kk_c}(\gl_{n|1}))\hookrightarrow \pi_\h^0,$$
whose image is generated by the coefficients of the pseudo-differential operator
$$(\partial_z- u_1(z))\cdots (\partial_z- u_n(z)) (\partial_z+u_{n+1}(z))^{-1}.$$
 \item There is a filtration on $\cent(V^{\kk_c}(\gl_{n|1}))$ whose associated graded differential algebra is isomorphic to the algebra of affine supersymmetric polynomials:
 \begin{align*}
 \gr\cent(V^{\kk_c}(\gl_{n|1}))\simeq \Lambda^{n|1}_{\aff}.
\end{align*}
 \item The $q$-character of the center $\cent(V^{\kk_c}(\gl_{n|1}))$ is given by
    \begin{align*}
    \ch[\cent(V^{\kk_c}(\gl_{n|1}))](q)=\frac{\sum_{m=0}^\infty (-1)^mq^{\frac{1}{2}m(m+1)}(q^{m+1};q)_{n-1}}{(q;q)_\infty^2(q,\cdots,q^{n-1};q)_\infty}.
\end{align*}
\end{enumerate}
\end{ThmLetter}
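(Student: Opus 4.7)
The plan is to reduce all three parts to a detailed study of the Heisenberg coset $\Com(\pi_\h^0, \W^{\kk_c}(\gl_{n|1}))$, which (by the identification cited in the paper's abstract) coincides with $\cent(V^{\kk_c}(\gl_{n|1}))$. The structural input is a free-field realization adapted to $\gl_{n|1}$: the even simple roots span a $\sll_n$-subsystem and contribute ordinary differential factors, whereas the unique odd simple root between the $n$-th and $(n+1)$-th node contributes a factor that must be inverted, so that the Miura-type transform becomes genuinely pseudo-differential, namely $(\pd_z - u_1)\cdots(\pd_z - u_n)(\pd_z + u_{n+1})^{-1}$.

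For part (1), I would start from a Wakimoto-type embedding $\W^{\kk}(\gl_{n|1}) \hookrightarrow \pi_\h^\kk \otimes F$, where $F$ is the free-field algebra built from $\beta\gamma$-systems attached to the even positive roots and a $bc$-system for the odd positive root. The screening operators are analyzed one simple root at a time: the even screenings reproduce the Feigin-Frenkel computation for $\sll_n$ and yield the factor $(\pd_z - u_1)\cdots(\pd_z - u_n)$, while the odd screening is non-local and, by a Dickey/Khesin-Zakharevich Lax-operator formalism adapted to $\gl_{n|1}$, its joint kernel is captured only after inverting $(\pd_z + u_{n+1})$. Specializing to $\kk = \kk_c$ and intersecting the resulting pseudo-differential symbol with $\pi_\h^0$ identifies the image of $\cent(V^{\kk_c}(\gl_{n|1}))$.

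For part (2), I would transport the standard PBW/conformal-weight filtration from $V^{\kk_c}(\gl_{n|1})$ to its center via the embedding of part (1). The associated graded is controlled by classical invariants: Sergeev's Harish-Chandra isomorphism identifies $\cent(U(\gl_{n|1}))$ with the supersymmetric polynomial algebra $\Lambda^{n|1}$, and passing to jets yields $\Lambda^{n|1}_{\aff}$. Surjectivity is then checked by verifying that the symbols of the pseudo-differential coefficients from part (1) realize the standard elementary/power-sum supersymmetric generators together with all their $\pd_z$-derivatives; injectivity is automatic once the embedding in part (1) is established.

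For part (3), the character becomes a Hilbert series computation for $\Lambda^{n|1}_{\aff}$. The denominator $(q;q)_\infty^{2}(q,\cdots,q^{n-1};q)_\infty$ reflects the free jet generators (two bosonic towers together with the power-sum symmetric generators), while the alternating numerator $\sum_{m\geq 0}(-1)^m q^{\frac{1}{2}m(m+1)}(q^{m+1};q)_{n-1}$ encodes the supersymmetry relations through an inclusion-exclusion argument; the final identification with the plane-partition-with-pit generating function can be made via known product formulas or a bijective interpretation. The hardest step will be part (1) at the critical level: the interaction between even and odd screenings degenerates at $\kk = \kk_c$, and one must show that the Heisenberg coset is \emph{exactly} the algebra generated by the pseudo-differential coefficients, not merely contained in it, which requires ruling out extra central fields produced by the non-local odd screening.
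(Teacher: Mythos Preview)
Your broad outline (free-field realization, screening operators, Miura-type pseudo-differential operator) is on the right track, but the proposal is missing the key structural mechanism the paper uses, and part (2) as stated is circular.

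The paper does \emph{not} establish part (1) by a direct kernel analysis of the odd screening at $\kk_c$. The decisive ingredient---entirely absent from your plan---is the Feigin--Semikhatov (Kazama--Suzuki type) duality $\W^\kk(\sll_{n|1}) \simeq \Com(\pi_{H_\Delta}, \W^{\check{\kk}}(\sll_n,\OO_{\mathrm{sr}})\otimes V_\Z)$, which imports explicit strong generators from the \emph{subregular} $\W$-algebra of $\sll_n$ (where they are already known by Genra--Kuwabara). With these generators in hand, the paper proves the critical-level decomposition $\W^{\kk_c}(\gl_{n|1}) \simeq \C[J_\infty\A^{n-1}]\otimes V^{\kk_c}(\gl_{1|1})$, which reduces everything to the $n=1$ case already settled by Molev--Mukhin. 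Your ``Dickey/Khesin--Zakharevich'' route gives no concrete mechanism for the exactness statement you correctly flag as the hardest step. (Also, the relevant coset is $\Com(\pi_J,\W^\kk)$ for a rank-one Heisenberg $J$, not $\Com(\pi_\h^0,\W^{\kk_c})$.)

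For part (2), ``Sergeev's Harish-Chandra isomorphism plus passing to jets yields $\Lambda^{n|1}_{\aff}$'' is precisely the statement to be proved (the Molev--Mukhin conjecture); it does not follow formally from the finite case. The paper obtains $\Lambda^{n|1}_{\aff}\subset \gr\cent(V^{\kk_c})$ from the Molev--Ragoucy Segal--Sugawara vectors (absent from your plan), and the reverse inclusion $\gr\cent(\W^{\kk_c})\subset \Lambda^{n|1}_{\aff}$ by explicitly checking, via the decomposition above and the known $\gl_{1|1}$ result, that the generators' symbols satisfy the supersymmetry condition. The identification $\cent(V^{\kk_c})\simeq\cent(\W^{\kk_c})$ and hence part (1) are then \emph{consequences} of this sandwich, not inputs. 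For part (3), the paper does not compute the Hilbert series of $\Lambda^{n|1}_{\aff}$ by inclusion--exclusion: it uses that the Heisenberg coset $C^\kk(\gl_{n|1})$ has level-independent character, then works on the dual subregular side and extracts the constant term of $\ch\W^{\check{\kk}}(\gl_n,\OO_{\mathrm{sr}})$ via Andrews' unary-false-theta expansion of $1/(zq,z^{-1}q;q)_\infty$.
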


We note that (2) was conjectured by Molev and Mukhin \cite{MM} (established for $n=1,2$ \cite{AN-2024, MM}) and implies that the center $\cent(V^{\kk_c}(\gl_{n|1}))$ is strongly generated by higher Segal-Sugawara vectors constructed by Molev and Ragoucy \cite{MR}.
The $q$-series appearing on the right-hand side in (3) agrees with the generating function of \emph{plane partitions} with the pit condition $(2,n+1)$ (Corollary \ref{cor: characters as plane partition counting}). This confirms a conjecture by Molev and Mukhin, generalizing the case $n=1$ \cite{A, MM}.

The key observation of the proof of Theorem \ref{thm: A} is the identification of the centers 
$$\cent(V^{\kk_c}(\gl_{n|1})) \simeq \cent(\W^{\kk_c}(\gl_{n|1}))$$
(Theorem \ref{them: Detecting the FF center}). This is a generalization of the case $n=2$ in our earlier work \cite{AN-2024}, which played the main role in identifying $\cent(V^{\kk_c}(\gl_{2|1}))$ with the large level limit of the parafermion vertex algebra $K^\infty(\gl_2)$ of $\gl_2$. We note that the identification of the center $\cent(\W^{\kk_c}(\g,\OO))$ with $\cent(V^{\kk_c}(\g))$ in the non-super case is well-known in general \cite{Ar12b, AM21}. 
The center $\cent(\W^{\kk_c}(\gl_{n|1}))$ can be \emph{deformed} to other levels, as it is the specialization of the Heisenberg coset subalgebra
\begin{align*}
    C^\kk(\gl_{n|1})=\Com\left(\pi_J,\W^\kk(\gl_{n|1}) \right)\subset \W^\kk(\gl_{n|1})
\end{align*}
at the critical level (Theorem \ref{thm: Wakimoto for sCoset}), see Section \ref{sec: Heisenberg coset} for details. This coset subalgebra $C^\kk(\gl_{n|1})$ at generic levels has several properties similar to those of the regular $\W$-algebras, such as a variant of the Feigin-Frenkel duality \cite{CGN21, FS}.
Indeed, the results in Theorem \ref{thm: A} can be deformed outside the critical levels. 
Among them is the following result originally conjectured by T. Procházka in physics (e.g. \cite{PR}).
\begin{ThmLetter}[Theorem \ref{prop: coset for Wsalg}]\label{thm: main B}
There is an embedding of vertex algebras 
$$C^\kk(\gl_{n|1})\hookrightarrow \pi_{\tori}^{k+\dC}$$
for $k \neq -\dC+1$ whose image at the generic levels is generated by the coefficients of the pseudo-differential operator 
$$(\partial_z+\varepsilon t_1(z))\cdots (\partial_z+\varepsilon t_n(z))(\partial_z-\varepsilon t_{n+1}(z))^{\varepsilon},$$
where $\varepsilon=1/(-1+k+\dC)$.
\end{ThmLetter}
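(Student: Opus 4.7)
The plan is to derive Theorem \ref{prop: coset for Wsalg} by starting from a Miura realization of $\W^\kk(\gl_{n|1})$ at generic level, identifying the Heisenberg subalgebra $\pi_J$ inside it, and then tracking how the generating pseudo-differential operator transforms when one projects onto the coset. The construction should specialize at $k=-\dC$ (i.e.\ $\varepsilon=-1$) to the critical-level formula of Theorem A.

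First I would invoke a free field realization of $\W^\kk(\gl_{n|1})$ as a subalgebra of the rank-$(n+1)$ Heisenberg vertex algebra $\pi_\h^{\kk}$, strongly generated by the coefficients of a Miura operator of the form
$$L^\kk(z)=\bigl(\partial_z+a_1(z)\bigr)\cdots\bigl(\partial_z+a_n(z)\bigr)\bigl(\partial_z-a_{n+1}(z)\bigr)^{-1},$$
with the $a_i(z)$ an appropriately shifted basis of Heisenberg generators and with the inverse factor in position $n+1$ reflecting the location of the odd simple root of $\gl_{n|1}$. Such a realization—stated at the critical level as Theorem \ref{thm: Wakimoto for sCoset}—admits a deformation to generic $\kk$ that produces a one-parameter family of Miura operators. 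I would then identify $\pi_J\subset\W^\kk(\gl_{n|1})$ as the Heisenberg subalgebra generated by the supertrace-like linear combination $J(z)=\sum_i \epsilon_i a_i(z)$ and verify that its commutant inside $\pi_\h^{\kk}$ is precisely $\pi_\tori^{k+\dC}$, with the level shift $k\mapsto k+\dC$ being the standard one for the passage between the affine and W-algebra normalizations.

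The heart of the argument is a change of variables $a_i(z)=\varepsilon t_i(z)+c_iJ(z)$ with the constants $c_i$ chosen so that the $t_i(z)$ are $J$-orthogonal. Substituting into $L^\kk(z)$ and applying the standard conjugation identity for pseudo-differential operators to push all $J$-dependence to the outer ends of the product, the Miura operator rewrites as a conjugate, by a vertex operator $\Phi(z)^{\alpha}$ built from a primitive of $J(z)$, of
$$\bigl(\partial_z+\varepsilon t_1(z)\bigr)\cdots\bigl(\partial_z+\varepsilon t_n(z)\bigr)\bigl(\partial_z-\varepsilon t_{n+1}(z)\bigr)^{\varepsilon},$$
for an explicit constant $\alpha$. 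The fractional exponent $\varepsilon=1/(-1+k+\dC)$ arises as the nontrivial rescaling required for the $J$-contributions from the $n$ factors on the left to balance against the $J$-contribution from the inverse factor on the right. Projecting onto $\pi_J$-invariants strips the conjugation dressing, leaving an operator whose coefficients lie in $\pi_\tori^{k+\dC}$ and produce the desired embedding. At generic levels I would then match the resulting strong generators of conformal weights $1,2,\ldots$ with the known strong generating type of $C^\kk(\gl_{n|1})$—either via a graded character comparison or via the Heisenberg-coset duality of \cite{CGN21,FS}—to conclude that the image exhausts the full coset.

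The main obstacle is making rigorous sense of the factor $(\partial_z-\varepsilon t_{n+1}(z))^{\varepsilon}$ for non-integer $\varepsilon$ and verifying that its formal coefficients, when expanded via the binomial series $\sum_{k\geq 0}\binom{\varepsilon}{k}(-\varepsilon t_{n+1})^k\partial_z^{\varepsilon-k}$ together with OPE corrections, combine with the polynomial prefactor to give honest vertex-algebra elements of $\pi_\tori^{k+\dC}$. This boils down to a family of combinatorial identities involving the generalized binomial coefficients $\binom{\varepsilon}{k}$, which govern the cancellation of the $J$-dependent pieces after conjugation. Verifying these identities order-by-order in the pseudo-differential expansion, and checking that the whole construction is compatible with the vertex-algebra structure rather than merely with formal symbols, is the technical core of the proof; the exclusion $k\neq -\dC+1$ corresponds exactly to the pole of $\varepsilon$, where the conjugation manipulation degenerates.
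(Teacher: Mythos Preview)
Your proposal has a genuine gap at its starting point. You invoke a Miura realization of $\W^\kk(\gl_{n|1})$ inside a rank-$(n+1)$ Heisenberg vertex algebra via an operator $(\partial+a_1)\cdots(\partial+a_n)(\partial-a_{n+1})^{-1}$, but no such realization exists: $\W^\kk(\gl_{n|1})$ has odd strong generators $G_\pm$ and cannot embed into a purely bosonic Heisenberg algebra. The actual free field realization (Theorem \ref{thm: Wakimoto for sW}) lands in $\mathcal{A}_\psi\otimes\pi^{k+\dC}_\h$, and the Heisenberg field $J=x+\varpi_n$ involves the fermionic current $x=\psi\psi^*$, not a linear combination of the $u_i$ alone. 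More seriously, the conjugation step cannot work as you describe: conjugation by a vertex operator $\Phi^\alpha$ shifts the symbols inside a pseudo-differential factor but never changes its exponent, so there is no mechanism by which $(\partial-a_{n+1})^{-1}$ becomes $(\partial-\varepsilon t_{n+1})^{\varepsilon}$ after ``stripping the dressing''. The fractional exponent is not produced by any balancing of $J$-contributions; it has to be put in by hand from the outset.

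The paper proceeds quite differently. The embedding $C^\kk(\gl_{n|1})\hookrightarrow\pi^{k+\dC}_\tori$ is already supplied by Theorem \ref{thm: Wakimoto for sCoset} (by taking the $\pi_J$-commutant of the realization in Theorem \ref{thm: Wakimoto for sW}), together with its screening-operator characterization at generic level. The substantive claim is then that each coefficient $W_p$ of $\mathcal{L}_k=D_1\cdots D_n D_{n+1}^{\varepsilon}$ is annihilated by every screening $S_i$. Since $[S_i,D_j]=0$ for $j\neq i,i+1$, this reduces to the rank-two statements $[S_i,D_iD_{i+1}]=0$ for $i<n$ (the classical $\W^{\kk'}(\gl_2)$ Miura identity) and $[S_n,D_nD_{n+1}^{\varepsilon}]=0$, which is exactly the $\gl_{1|1}$ case established in Proposition \ref{prop: gen of gl11 coset} by a direct binomial-coefficient computation. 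Strong generation at generic $\kk$ is then obtained by exhibiting suitable combinations $\widetilde{W}_1,\widetilde{W}_2,\widetilde{W}_3$ whose $\lambda$-brackets match a $\W_\infty$-type algebra and invoking \cite{CL4}. Your instinct that combinatorial identities among $\binom{\varepsilon}{k}$ are the technical core is correct, but they enter through the screening check for the $\gl_{1|1}$ block, not through any conjugation argument.
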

This result implies that the center $\cent(V^{\kk_c}(\gl_{n|1}))$ realized as the regular functions of the pseudo-differential operator in Theorem \ref{thm: A} (1) inherits a natural Poisson vertex algebra structure restricting on the standard one on $\pi^0_{\h}$ (Proposition \ref{prop: Poisson str on the center}).

\vspace{1ex}
In future works, we plan to describe the center $\cent(V^{\kk_c}(\g))$ for basic-classical (simple) Lie superalgebras through pseudo-differential operators (hopefully related to superopers \cite{Z} and Gaudin models \cite{LM21, MVY}) and connect the category of spherical $V^{\kk_c}(\g)$-representations to quasi-coherent sheaves over such operators \cite{FG}. 

\vspace{1ex}
\paragraph{\textbf{Outline.}} 
The rest of the paper is organized as follows.
We start in Section \ref{sec: Wsuperalgebras} with a review of affine vertex superalgebras and $\W$-superalgebras.
In Section \ref{Wsalg and Heisenberg coset}, we study the Heisenberg coset of the regular $\W$-superalgebras $\W^\kk(\gl_{n|1})$ and prove Theorem \ref{thm: main B}. 
In Section \ref{sec: duality and strong generators}, we determine the strong generators of $\W^\kk(\gl_{n|1})$ based on the Feigin-Semikhatov duality, and describe the center of $\W^\kk(\gl_{n|1})$ at the critical level $\kk=\kk_c$. In Section \ref{sec: Wakimoto realization}, we review the Wakimoto realization of $\W$-superalgebras and relate the centers of $\W^{\kk_c}(\gl_{n|1})$ and $V^{\kk_c}(\gl_{n|1})$.
Based on them, we prove Theorem \ref{thm: A} in Section \ref{sec: Center of affine at the critical level} and \ref{sec: Character formula}.
We formulate a conjecture on the structure of regular $\W$-superalgebras $\W^\kk(\gl_{n|m})$ at the critical level and show the first non-trivial example in Section \ref{sec: Some conjectures}.

\vspace{1ex}
\paragraph{\textbf{Acknowledgements.}} 
We would like to thank T. Arakawa and M. Gorelik for their interest throughout this work.
S.N. thanks T. Procházka for useful discussion. 
D.A. is partially supported by the Croatian Science Foundation under the project IP-2022-10-9006 and   by the project “Implementation of cutting-edge research and its application as part of the Scientific Center of
Excellence for Quantum and Complex Systems, and Representations of Lie Algebras“, Grant No. PK.1.1.10.0004, co-financed by the European Union through the European Regional Development Fund--Competitiveness and Cohesion Programme.
2021-2027.
B.F. is partially supported by ISF 3037 2025 1 1.

\section{\tmath{$\W$}-superalgebras}\label{sec: Wsuperalgebras}
\subsection{Affine vertex superalgebras} Let $\g=\gl_{m|n}$ be the general linear Lie superalgebra associated with the vector superspace $V=\C^{m|n}$ and $e_{i,j}$ ($i,j=1,\dots, m+n$) be the $(i,j)$-th elementary matrix. 
We take the even supersymmetric invariant bilinear form on $\gl_{m|n}$ to be 
\begin{align*}
    \kk=(k+\dC)\kk_{V}- \frac{1}{2} \kk_{\g}
\end{align*}
where $\dC=n-m$ is the dual Coxeter number, and $\kk_V$ and $\kk_\g$ are the supertrace on $V$ and the Killing form, that is,
\begin{align*}
    \kk_V(u,v)=\str_V(uv),\quad \kk_\g=\str_\g(\ad_u \ad_v ).
\end{align*}
Let us identify $\kk$ with $k$ in the below. 
The affine Lie algebra $\widehat{\g}$ is the central extension of the loop algebra $\g[t^{\pm1}]$
\begin{align*}
    0\rightarrow \C K \rightarrow \widehat{\g}\rightarrow \g[t^{\pm1}] \rightarrow 0
\end{align*}
with Lie bracket 
\begin{align*}
    [u_{(p)},v_{(q)}]:=[u,v]_{(p+q)}+\kk(u,v) p\delta_{p+q,0}K,\quad [K,\widehat{\g}]=0
\end{align*}
where $u_{(p)}=ut^p$.
The (universal) affine vertex superalgebra associated with $\g$ at level $\kk$ is defined as the parabolic induction 
\begin{align*}
    V^\kk(\g):=U(\widehat{\g})\otimes_{U(\g[t]\oplus \C K)} \C
\end{align*}
induced from the $(\g[t]\oplus \C K)$-module $\C$ where $\g[t]$ acts by $0$ and $K$ by the identity, equipped with a unique vertex algebra structure with the unit $\wun=1\otimes1$, the translation operator $\pd$ satisfying $[\pd,u_{(p)}]=-p u_{(p-1)}$, and the state-field correspondence $Y(\cdot,z)$ satisfying $Y(u,z)=\sum u_{(p)} z^{-p-1}$ were $u$ is identified with $u_{(-1)}\otimes \wun$. It will be convenient to use the $\lambda$-bracket 
$$[a{}_\lambda b]=\sum_{n=0}^{\infty} \frac{1}{n!}a_{(n)}b\lambda^n$$
instead of the commutator formula 
$$[Y(a,z),Y(b,w)]=\sum_{n=0}^\infty Y(a_{(n)}b,w)\frac{1}{n!}\partial_w^n\delta(z-w)$$
by using the delta function $\delta(z-w)=\sum_{n=-\infty}^\infty z^nw^{-n-1}$.

Note that for $m \neq n$ as we will consider otherwise stated, the decomposition $\gl_{m|n}=\sll_{m|n}\oplus \C \Omega$ with $\Omega=\sum e_{i,i}$ induces an isomorphism of vertex superalgebras
\begin{align}\label{eq: decomp of gl}
    V^\kk(\gl_{m|n})\simeq  V^\kk(\sll_{m|n}) \otimes  \pi^{\kk}_{\Omega}.
\end{align}
Here $\pi^{\kk}_{\Omega}$ is the Heisenberg vertex algebra $V^{\kk}(\gl_1)$ with $\gl_1=\C \Omega$ as we have $\lm{u}{\Omega}=0$ for all $u\in \sll_{m|n}$. On the other hand, 
\begin{align*}
    &\lm{u}{v}=[u,v]+k\kk_{V}(u,v),\quad \lm{\Omega}{\Omega}=(m-n)(k+\dC)\lambda
\end{align*}
hold for $u,v \in \sll_{m|n}$.

The level $\kk=-\frac{1}{2}\kk_\g$, denoted by $\kk_c$ is called \emph{the critical level}; the bilinear form is explicitly given by  
\begin{align*}
    \kappa_c(e_{i,j},e_{p,q})=\delta_{i,j}\delta_{p,q}(-1)^{p(i)+p(q)}-(m-n)\delta_{i,j}\delta_{p,q}(-1)^{p(i)}
\end{align*}
where $p(i)$ is the parity, which is 0 (resp. 1) if $i\leq m$ (resp. $i>m$).

Given a vertex superalgebra $V$ and a subalgebra $W$, the coset $\Com(W,V)$ is the subalgebra defined by 
\begin{align*}
    \Com(W,V):=\{u \in V ; [Y(u,z),Y(v,w)]=0\ (\forall v\in W)\}.
\end{align*}
In particular, the coset subalgebra $\cent(V):=\Com(V,V)$
is called the center of $V$. 
For a simple Lie algebra $\g$, the center $\mathfrak{z}(V^\kk(\g))$ is known to be trivial $\C$ at $\kk\neq \kk_c$, whereas non-trivial at the critical level $\kk=\kk_c$.
The difference comes from the existence of the so-called Segal-Sugawara conformal vector $L$ given by 
$$L=\frac{1}{k+\dC}S,\qquad S=\frac{1}{2}\sum_{i=1}^{\dim \g} (-1)^{p(x_i)}x_ix^i$$ 
where $x_i$ is a basis of $\g$ with dual basis $x^i$ satisfying $\kk_V(x_i,x^j)=\delta_{i,j}$. 
While $S$ is defined for all levels and satisfies 
$$[S {}_\lambda u]=(k+\dC)(\partial u+ u \lambda)$$
for all $u \in \g$, $L$ is defined only outside the critical level.
The above formula implies that $S$ is central at $\kk=\kk_c$ while the rescaled one $L$ provides a conical internal grading structure, which implies $\cent(V^\kappa(\g))=\C$ for all $\kk \neq \kk_c$.

\subsection{\tmath{$\W$}-superalgebras}
In the finite case, the description of the center $\cent(\g)\subset U(\g)$ for basic-classical simple Lie superalgebras $\g$ is obtained through the Harish-Chandra homomorphism 
$$\xi\colon U(\g) \twoheadrightarrow U(\h)$$
where $\h$ is a Cartan subalgebra, and is called the Harish-Chandra isomorphism, see \cite{Musson}. 
In the even case, it gives rise to an isomorphism $\cent(\g)\simeq U(\h)^W$ where $W$ is the Weyl group. An alternative approach is to quantize the Kostant slice, which is defined as the subvariety $\mathscr{S}=f+\g^e\subset \g$ by using the regular (a.k.a principal) $\sll_2$-triple $\{e,h,f\}\subset \g$.
Such a triple exists uniquely up to conjugation by the adjoint group $G$ whose Lie algebra is $\g$. Then, $\mathscr{S}$ is isomorphic to the categorical quotient $\g/\!/G$. The slice $\mathscr{S}$ is obtained as a Hamiltonian reduction of $\g$ for the unipotent group $N_+$ associated with the upper nilpotent subalgebra $\nil_+$ and the character $\chi=\kappa_V(f,-)\colon \nil_+\rightarrow \C$. 
The Hamiltonian reduction is quantized to the so-called BRST reduction of $U(\g)$ to obtain $\cent(\g)$, whose vertex algebraic analogue was introduced \cite{FF1} to describe the center $\cent(V^{\kk_c}(\g))$, which by construction can be deformed to other levels and called the regular $\W$-algebras $\W^\kk(\g)$.

More generally, the $\W$-superalgebras are vertex superalgebras \cite{KRW03, KW04} obtained from affine vertex superalgeras $V^\kk(\g)$ associated with basic-classical Lie superalgebras $\g$ at level $\kk$ through the BRST reduction associated with an $\sll_2$-triple $\{e,h,f\}$ and a good $\frac{1}{2}\Z$-grading $\Gamma\colon \g=\oplus_d \g_d$. For simplicity, we consider the case where the grading $\Gamma$ is called even, i.e., a $\Z$-grading. 
Let us consider $\g_+=\bigoplus_{d>0}\g_d$ and take a (parity-homogeneous) basis $e_\alpha$ ($\alpha \in S_+$) with structure constants $c_{\alpha\beta}^\gamma$, i.e.  $[e_\alpha,e_\beta]=\sum c_{\alpha,\beta}^\gamma e_\gamma$.

Let $\bigwedge{}^{\bullet}(\g_+)$ be the charged fermion vertex superalgebra generated by $\psi_{\alpha}, \psi_{\alpha}^*$ ($\alpha \in S_+$) which are of opposite parity to $e_\alpha$ and satisfy the $\lambda$-brackets
\begin{align*}
    \lm{\psi_{\alpha}}{\psi_{\beta}^*}=\delta_{\alpha,\beta},\quad \lm{\psi_{\alpha}}{\psi_{\beta}}=\lm{\psi_{\alpha}^*}{\varphi_{\beta}^*}=0.
\end{align*}
Then, we associate the BRST complex 
\begin{align}\label{BRST complex}
C^\bullet_f(V^\kk(\g)):=V^\kk(\g)\otimes \bigwedge{}^{\bullet}(\g_+)
\end{align}
equipped with the differential $d=\int Y(Q,z) \mathrm{d}z$ where
\begin{align*}
    Q=\sum_{\alpha \in S_+} (-1)^{p({e}_\alpha)}(e_\alpha+(f|e_\alpha)) \psi_\alpha^*-\frac{1}{2} \sum_{\alpha,\beta,\gamma \in S_+} (-1)^{p({e}_\alpha)p({e}_\gamma)} c_{\alpha\beta}^\gamma \psi_\gamma \psi_\alpha^* \varphi_\beta^*
\end{align*}
where $(\cdot|\cdot)$ denote the normalized invariant bilinear form on $\g$.

By \cite{KW04}, one has the cohomology vanishing $\HH^{\neq0}(C_{f}^\bullet(V^\kk(\g)))=0$ and the vertex superalgebra obtained at the zero-th cohomology is called the $\W$-superalgebra associated with $(\g,f,\Gamma)$ at level $\kappa$. Since it only depends on the orbit $\OO$ containing $f$ and usually $\Gamma$ is clear, we will denote it as 
\begin{align*}
    \W^\kk(\g,\OO)=\HH^0(C_{f}^\bullet(V^\kk(\g))).
\end{align*}
Note that by replacing $V^\kk(\g)$ in \eqref{BRST complex} with $V^\kk(\g)$-modules $M$, we obtain $\W^\kk(\g,\OO)$-modules, giving rise to a functor
$$\HH_\OO(-)\colon V^\kk(\g)\Mod \rightarrow \W^\kk(\g,\OO)\Mod,\quad M \mapsto \HH^{0}(C_{f}^\bullet(M)).$$

\section{\tmath{$\W$}-superalgebra \tmath{$\W^{\kk}(\gl_{n|1})$} and Heisenberg coset}\label{Wsalg and Heisenberg coset}
\subsection{Regular \tmath{$\W$}-superalgebra} \label{sec: reg Wsalg}
Let $\g=\gl_{n|1}$ and $\g=\nil^+ \oplus \h \oplus \nil^-$ be the standard triangular decomposition. 
Then $\h$ is spanned by the $e_{1,1},\cdots, e_{n+1,n+1}$, $\nil^+$ is generated by $e_{1,2},\cdots,e_{n,n+1}$ which are root vectors corresponding to simple roots $\alpha_1,\cdots, \alpha_{n}$. 
We will identify $\h$ with its dual $\h^*$ by the bilinear form $\kk_V$ and thus 
$$\alpha_1=e_{1,1}-e_{2,2},\cdots, \alpha_{n-1}=e_{n-1,n-1}-e_{n,n},\quad \alpha_{n}=e_{n,n}+e_{n+1,n+1},$$
which are naturally identified with the coroots $h_i$, i.e., $h_i=\alpha_i$ for $i=1,\cdots,n$.

Let $n>1$ and take the regular nilpotent element
$$f=e_{2,1}+e_{3,2}+\cdots +e_{n,n-1}$$
in $\gl_{n|1}$ (resp. $\sll_{n|1}$)
and the good grading $\Gamma$ given by the weighted Dynkin diagram
\begin{center} 
\begin{tikzpicture}
{\dynkin[root
radius=.1cm,labels={\alpha_1,\alpha_2,\alpha_{n-1},\alpha_n},labels*={1,1,1,0},edge length=1cm]A{oo.ot}};
\end{tikzpicture}
\end{center}
and denote by $\W^\kk(\gl_{n|1})$ (resp. $\W^\kk(\sll_{n|1})$) the corresponding $\W$-superalgebra.

Let $\mathcal{A}_{\psi}$ denote the $bc$-system vertex superalgebra generated by odd fields $\psi, \psi^*$ satisfying the $\lambda$-brackets
\begin{align*}
\lm{\psi}{\psi^*}=1,\quad \lm{\psi}{\psi}=\lm{\psi^*}{\psi^*}=0.
\end{align*}
and $\pi_\h^{k+\dC}$ the Heisenberg vertex algebra associated with the Cartan subalgebra $\h$. By definition, it is generated by the fields $u_1,\cdots,u_{n+1}$ which correspond to $e_{1,1},\cdots, e_{n+1,n+1}$ and satisfy the $\lambda$-brackets
$$\lm{u_i}{u_j}=(k+\dC)\kk_V(u_i,u_j)\lambda.$$
For $\lambda \in \h^*$, let $\pi^{k+\dC}_{\h,\lambda}$ denote the Fock module over $\pi^{k+\dC}_{\h}$ of highest weight $\lambda$. 
For later convenience, we denote a non-zero highest weight vector by $\hwt{\frac{1}{k+\dC}\lambda}$ and thus it satisfies 
$$\lm{u_i}{\hwt{\frac{1}{k+\dC}\lambda}}=\lambda(u_i)\hwt{\frac{1}{k+\dC}\lambda}.$$
Let us introduce the following screening operators 
\begin{align}
    S_i:=\int Y(P_i \hwt{-\frac{1}{k+\dC}\alpha_i},z)\dz \colon \mathcal{A}_{\psi}\otimes \pi^{k+\dC}_{\h} \rightarrow \mathcal{A}_{\psi}\otimes \pi^{k+\dC}_{\h,-\alpha_i}
\end{align}
with $P_i=\wun$ for $i=1,\cdots,n-1$ and $P_{n}=\psi$ for $i=n$.
By \cite{CGN21}, $\W^\kk(\gl_{n|1})$ is realized as follows.
\begin{theorem}[\cite{CGN21}]\label{thm: Wakimoto for sW}
There is an embedding of vertex superalgebras
\begin{align*}
    \Psi\colon \W^\kk(\gl_{n|1}) \hookrightarrow \mathcal{A}_{\psi}\otimes \pi^{k+\dC}_{\h}
\end{align*}
which depends continuously on $\kk$ so that the generic image is characterized as the joint kernel of the screening operators
\begin{align*}
    \W^\kk(\gl_{n|1}) \overset{\Psi}{\simeq} \bigcap_{i=1}^n \Ker S_i \subset \mathcal{A}_{\psi}\otimes \pi^{k+\dC}_{\h}.
\end{align*}    
\end{theorem}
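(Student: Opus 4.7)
The plan is to obtain $\Psi$ as the image of a Wakimoto-type free field realization of $V^\kk(\gl_{n|1})$ under the BRST reduction defining $\W^\kk(\gl_{n|1})$, and then to read off the surviving free fields and residual screenings. The first step is to construct a Wakimoto embedding
\[
V^\kk(\gl_{n|1}) \hookrightarrow \mathcal{M} \otimes \pi^{k+\dC}_\h,
\]
where $\mathcal{M}$ is the tensor product of one $\beta\gamma$-system for each even positive root and one $bc$-system for each odd positive root of $\gl_{n|1}$, and whose image is characterized as the joint kernel of the simple-root Wakimoto screenings. This is the natural super analogue of the Wakimoto realization (see e.g.\ \cite{F} for the even case), obtained by polarizing the loop algebra along $\nil^+$ and applying the Gauss decomposition.

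Next, I would apply the BRST reduction functor $\HH^0(C^\bullet_f(-))$ with the prescribed good grading, in which $\alpha_1,\dots,\alpha_{n-1}$ have degree $1$ while the odd simple root $\alpha_n$ has degree $0$. Here $\g_0 = \h \oplus \C e_{n,n+1} \oplus \C e_{n+1,n}$, and $\g_+$ contains all non-simple even positive root vectors together with the odd vectors $e_{i,n+1}$ for $i<n$. By a standard spectral-sequence argument (as in \cite{KW04}), the cohomology cancels the $\beta\gamma$ and $bc$ factors of $\mathcal{M}$ attached to roots in $\g_+$ against their ghost partners, leaving only the $bc$-system for the degree-zero odd pair $(e_{n,n+1}, e_{n+1,n})$, which I identify with $\mathcal{A}_\psi$, tensored with $\pi^{k+\dC}_\h$. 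This yields the embedding $\Psi$; continuity in $\kk$ is manifest from the explicit formulas.

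Third, the Wakimoto simple-root screenings are chain maps and hence descend to operators on $\mathcal{A}_\psi\otimes \pi^{k+\dC}_\h$. For $i<n$ the ghost dependence drops out, giving $S_i = \int Y(\hwt{-\frac{1}{k+\dC}\alpha_i}, z)\,\dz$ with $P_i=\wun$; for $i=n$, the residual odd degree-zero field $\psi$ survives and produces $S_n = \int Y(\psi\,\hwt{-\frac{1}{k+\dC}\alpha_n}, z)\,\dz$. The inclusion $\Psi(\W^\kk(\gl_{n|1})) \subset \bigcap_i \Ker S_i$ is then formal, since the screenings commute with the BRST differential. For the reverse inclusion at generic $\kk$, I would compare graded characters: the character of $\W^\kk(\gl_{n|1})$ is computed by the Euler--Poincar\'{e} principle applied to the BRST complex, while that of $\bigcap_i \Ker S_i$ is obtained via Felder-type resolutions of each Fock summand, and the two agree outside a discrete set of levels.

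The main obstacle I anticipate is the bookkeeping of parities and signs throughout the super BRST spectral sequence, so as to verify that the residual odd screening inherits exactly $P_n=\psi$ rather than $\psi^*$ or a linear combination of both. A secondary difficulty is ruling out accidental character jumps in the joint-kernel comparison, which is why the identification with $\bigcap_i \Ker S_i$ is claimed only at generic $\kk$, while $\Psi$ itself is defined continuously in the level.
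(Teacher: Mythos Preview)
Your approach is correct and matches the paper's own derivation. The theorem is quoted from \cite{CGN21}, but the paper re-obtains it in Section~\ref{sec: Wakimoto realization} precisely as you outline: start from the affine Wakimoto realization $V^\kk(\gl_{n|1})\hookrightarrow \sA\otimes\pi^{k+\dC}_\h$ with simple-root screenings (Proposition~\ref{Wakimoto for affine I}), apply $\HH_\OO$, and use that the BRST cohomology of the free-field side collapses to $\sA_\OO\otimes\pi^{k+\dC}_\h$ with $\sA_\OO=\langle a_{\alpha_n},a_{\alpha_n}^*\rangle\simeq \mathcal{A}_\psi$ (Lemma~5.2, Proposition~\ref{Wakimoto for principal super I}); the residual screenings are exactly $[e_i^R]=\wun$ for $i<n$ and $[e_n^R]=a_{\alpha_n}\mapsto\psi$, resolving the sign/parity worry you flagged. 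One small inaccuracy: your description of $\g_+$ omits the simple even root vectors $e_{i,i+1}$ ($i<n$), which also lie in $\g_{>0}$ and are cancelled by the reduction; only $e_{n,n+1}$ sits in grade $0$ and survives.
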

We note that the decomposition \ref{eq: decomp of gl} naturally induces 
$$\W^\kk(\gl_{n|1})\simeq \W^\kk(\sll_{n|1})\otimes \pi^{\kk}_\Omega.$$

\subsection{Heisenberg coset}\label{sec: Heisenberg coset}
By the boson-fermion correspondence, the vertex superalgebra $\mathcal{A}_{\psi}$ is isomorphic to the lattice vertex superalgebra 
$V_\Z=\bigoplus_{n \in \Z}\pi_{x,nx}$
where $\Z=\Z x$ is an integral lattice equipped with a bilinear form $(n x,m x)=nm$.
The basis $x$ is identified with the Heisenberg field satisfying the $\lambda$-bracket $\lm{x}{x}=\lambda$ and $\pi_{x,nx}$ is the Fock module over $\pi_x$ with highest weight vector $\hwt{nx}$ satisfying $\lm{x}{\hwt{nx}}=n \hwt{nx}$. The isomorphism is given by
\begin{align}\label{eq: boson-fermion}
    \mathcal{A}_\psi \xrightarrow{\simeq} V_\Z,\quad \psi,\psi^* \mapsto \hwt{x}, \hwt{-x}, 
\end{align}
and maps $\psi\psi^*$ to $x$. For convenience, we will regard them as the same algebra. 

Thanks to the realization in Theorem \ref{thm: Wakimoto for sW}, it is clear that $\W^\kk(\gl_{n|1})$ contains a Heisenberg field 
\begin{align}
    J=x+\varpi_{n}
\end{align}
where $\varpi_{n}=-u_{n+1}$ is the $n$-th fundamental weight of $\gl_{n|1}$.
It satisfies the $\lambda$-bracket
\begin{align*}
    \lm{J}{J}=\left(1-(k+\dC)\right)\lambda.
\end{align*}
Hence, when $k \neq -\dC+1$, $J$ is non-degenerate and thus $\W^\kk(\gl_{n|1})$ is semisimple as a module over the Heisenberg vertex algebra $\pi_J$. Let us denote by
\begin{align}
    C^\kk(\gl_{n|1})=\Com\left(\pi_J,\W^\kk(\gl_{n|1}) \right)\subset \W^\kk(\gl_{n|1})
\end{align}
the Heisenberg coset subalgebra and decompose $\W^\kk(\gl_{n|1})$ into 
\begin{align}\label{eq: Heisenberg decomposition}
    \W^\kk(\gl_{n|1})\simeq \bigoplus_{n\in \Z} C^\kk_n(\gl_{n|1})\otimes \pi_{J,n}
\end{align}
as a module over $C^\kk(\gl_{n|1})\otimes \pi_J$ where $\pi_{J,n}$ is the Fock $\pi_J$-module generated by a highest weight vector $\hwt{n}$ satisfying $\lm{J}{\hwt{n}}=n \hwt{n}$. 

Thanks to the semisimplicity of weight modules over $\pi_J$, it is clear that  
$$C^\kk(\gl_{n|1})\hookrightarrow \Com\left(\pi_J, \mathcal{A}_{\psi}\otimes \pi^{k+\dC}_{\h} \right)\simeq \pi_{\tori}^{k+\dC}$$
where $\pi_{\tori}^{k+\dC}$ is the Heisenberg vertex algebra generated by 
\begin{align}\label{eq: def of t's}
    t_1=u_1,\cdots, t_n=u_n, t_{n+1}=u_{n+1}-(k+\dC)x,
\end{align}
which satisfy the $\lambda$-brackets
\begin{align}\label{eq: OPE for ti's}
    \lm{t_i}{t_j}=0,\quad (i\neq j),\qquad \lm{t_i}{t_i}=\begin{cases}
        (k+\dC)\lambda, & (i\neq n+1),\\
        (k+\dC)(k+\dC-1)\lambda, & (i=n+1).
    \end{cases}
\end{align}
Then, we obtain the following characterization of $C^\kk(\gl_{n|1})$ thanks to Theorem \ref{thm: Wakimoto for sW}. 
\begin{theorem}\label{thm: Wakimoto for sCoset}\hspace{0mm}\\
\textup{(1) (\cite{CGN21})}
For $k \neq -\dC+1$, the embedding $\rho$ induces the embedding
\begin{align*}
    \Psi\colon C^\kk(\gl_{n|1}) \hookrightarrow  \pi^{k+\dC}_{\tori}
\end{align*}
which depends continuously on $\kk$ so that the generic image is characterized as the joint kernel of the screening operators
\begin{align*}
    C^\kk(\gl_{n|1}) \overset{\Psi}{\simeq} \bigcap_{i=1}^n \Ker S_i \subset \pi^{k+\dC}_{\tori}.
\end{align*} 
\textup{(2)} For the critical level $\kk=\kk_c$, i.e., $k=-\dC$, 
\begin{align*}
    C^{\kk_c}(\gl_{n|1})= \cent(\W^{\kk_c}(\gl_{n|1})).
\end{align*}
\end{theorem}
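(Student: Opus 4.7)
The plan is to establish the equality in Part (2) by proving both inclusions separately, freely using the embedding $\Psi\colon C^{\kk_c}(\gl_{n|1})\hookrightarrow \pi^{0}_{\tori}$ supplied by Part (1) at the critical level (note $k=-\dC\neq -\dC+1$, so Part (1) does apply).

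The inclusion $\cent(\W^{\kk_c}(\gl_{n|1}))\subset C^{\kk_c}(\gl_{n|1})$ is tautological: the Heisenberg vector $J$ lies in $\W^{\kk_c}(\gl_{n|1})$ by construction, and a central element must commute with it and hence with the entire subalgebra $\pi_J$, placing it in the coset.

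For the opposite inclusion, I rely on a degeneration of the ambient Heisenberg at $\kk_c$. Setting $k+\dC=0$ in \eqref{eq: OPE for ti's} shows $\lm{t_i}{t_j}=0$ for all $i,j$, so $\pi^{0}_{\tori}$ is a commutative vertex subalgebra. Moreover, \eqref{eq: def of t's} reduces at the critical level to $t_i=u_i$ for every $i=1,\dots,n+1$, whence in fact $\pi^{0}_{\tori}=\pi^{0}_{\h}$ as subalgebras of $\mathcal{A}_{\psi}\otimes \pi^{0}_{\h}$. Since $\pi^{0}_{\h}$ is the commutative tensor factor and has trivial OPE against the fermionic factor $\mathcal{A}_{\psi}$, one gets
\[
\pi^{0}_{\tori}=\pi^{0}_{\h}\subset \cent\bigl(\mathcal{A}_{\psi}\otimes \pi^{0}_{\h}\bigr).
\]
Composing with $\Psi$, any $c\in C^{\kk_c}(\gl_{n|1})$ has vanishing $\lambda$-bracket against every element of $\mathcal{A}_{\psi}\otimes \pi^{0}_{\h}$. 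Since the Wakimoto embedding $\W^{\kk_c}(\gl_{n|1})\hookrightarrow \mathcal{A}_{\psi}\otimes \pi^{0}_{\h}$ of Theorem \ref{thm: Wakimoto for sW} is an injective vertex superalgebra morphism, this transports back: $[c_\lambda v]=0$ inside $\W^{\kk_c}(\gl_{n|1})$ for every $v$, giving $c\in \cent(\W^{\kk_c}(\gl_{n|1}))$.

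The only delicate point I anticipate is confirming that $\Psi$ from Part (1) specializes cleanly and remains injective at $\kk=\kk_c$, since the screening-kernel description in Part (1) is asserted only for generic $\kk$; here I rely on the continuous dependence of $\Psi$ on $\kk$ together with the fact that injectivity is claimed throughout the range $k\neq -\dC+1$. Once this is in hand, the whole non-trivial direction collapses to the one-line centrality observation above in the ambient algebra at $k+\dC=0$, with no further computation required.
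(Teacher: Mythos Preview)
Your proof is correct and follows essentially the same approach as the paper: both directions are obtained exactly as you describe, with the nontrivial inclusion coming from the observation that at $\kk=\kk_c$ one has $\pi^0_\tori=\pi^0_\h=\cent(\mathcal{A}_\psi\otimes\pi^0_\h)$, so the Wakimoto embedding forces $C^{\kk_c}(\gl_{n|1})$ into the center. Your extra care about the specialization of $\Psi$ at $\kk_c$ is well placed but not needed beyond what Part~(1) already asserts for all $k\neq -\dC+1$.
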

\begin{proof}
    (1) is clear from Theorem \ref{thm: Wakimoto for sW}. 
    (2) By construction, we have $C^{\kk_c}(\gl_{n|1})\supset \cent(\W^{\kk_c}(\gl_{n|1}))$. 
    At the critical level $\kk=\kk_c$, we have
    $$\pi^{h+\dC}_\tori=\pi^{h+\dC}_\h=\cent\left(\mathcal{A}_{\psi}\otimes \pi^{k+\dC}_{\h} \right)$$ 
    by \eqref{eq: def of t's}. Hence, the embedding in Theorem \ref{thm: Wakimoto for sW} implies $C^{\kk_c}(\gl_{n|1})\subset \cent(\W^{\kk_c}(\gl_{n|1}))$ and thus $C^{\kk_c}(\gl_{n|1})= \cent(\W^{\kk_c}(\gl_{n|1}))$.
\end{proof}
We note that the boson-fermion correspondence \eqref{eq: boson-fermion} implies that 
\begin{align}
   S_i=\begin{cases}\displaystyle{\int Y(\hwt{-\frac{1}{k+\dC}(t_i-t_{i+1})},z)\dz},&  (i=1,\cdots,n-1),\\
                     \displaystyle{\int Y(\hwt{-\frac{1}{k+\dC}(t_n+t_{n+1})},z)\dz},& (i=n).
   \end{cases}
\end{align}

\subsection{Case of \tmath{$\gl_{1|1}$}}
For $\g=\gl_{1|1}$, the $\W$-superalgebra $\W^\kk(\g)$ is just the affine vertex superalgebra $V^\kk(\gl_{1|1})$ and the free field realization is given by 
\begin{align}
\label{eq: Wakimoto for gl11}
\begin{array}{ccccc}
  \Psi&\colon&  V^\kk(\gl_{1|1})&\hookrightarrow &\mathcal{A}_\psi \otimes \pi^k_\h\\
    &&e_{1,1},\ e_{1,2} & \mapsto & u_1+x,\ \psi,\\
    &&e_{2,1},\ e_{2,2} & \mapsto &\psi^*(u_1+u_2)+k \partial \psi^*,\ u_2-x,
    \end{array}
\end{align}
whose image at $k\neq0$ agrees with the kernel of the screening operator
$$S=\int Y(\psi \mathrm{e}^{-\frac{1}{k}(u+v)},z)\dz.$$
Hence, the Heisenberg coset
$$C^\kk(\gl_{1|1})=\Com(\pi^\kk_{e_{2,2}}, V^\kk(\gl_{1|1}))$$
embeds into the Heisenberg vertex algebra
$$\Com (\pi_{e_{2,2}},\mathcal{A}_\psi \otimes \pi^k_\h)\simeq \pi_{t_1,t_2}$$
where 
$$t_1=u_1,\qquad t_2=u_2-k x$$
whose image at level $\kk\neq 0,1$ can be described by the kernel of the screening operator $S$, which is rewritten as
$$S=\int \mathrm{e}^{-\frac{1}{k}(t_1+t_2)}(z)dz.$$
From this expression, we can relate strong generators to the coefficients 
\begin{align}\label{eq: generators Wn}
    W_n=\binom{\varepsilon}{n-1}D_1 D_2^{n-1} \wun+\binom{\varepsilon}{n}D_2^n \wun,\qquad \varepsilon=\frac{1}{-1+k},
\end{align}
of the expansion of the pseudo-differential operator:
$$\mathcal{L}_k=D_1 D_2^{\varepsilon}=\left(\wun+\sum_{n=1}^\infty W_n \partial^{-n}\right)\partial^{1+\varepsilon} $$
where 
$$D_1=\partial+\varepsilon t_1,\quad D_2=\partial-\varepsilon t_2.$$
\begin{proposition}\label{prop: gen of gl11 coset}
For $k\neq1$, the elements $W_n$ ($n \geq 0$) lie in the coset $C^\kk(\gl_{1|1})$.
\end{proposition}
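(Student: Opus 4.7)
The plan is to reduce the claim to a single operator identity and then deduce it via a Miura-type intertwining. By the $\gl_{1|1}$ specialization of \Cref{thm: Wakimoto for sCoset}(1) together with the boson--fermion correspondence \eqref{eq: boson-fermion}, the coset is realized at generic $k$ as $C^\kk(\gl_{1|1}) = \Ker(S|_{\pi_{t_1,t_2}})$, where $S = \int V(z)\,\dz$ with $V = \mathrm{e}^{-\frac{1}{k}(t_1+t_2)}$. Since each $W_n$ is polynomial in $\varepsilon = 1/(k-1)$ and in the $t_i,t_i',\ldots$, continuity in $k$ propagates any generic-level conclusion to every $k\ne 1$. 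The task therefore reduces to showing $S \cdot W_n = 0$ for all $n\ge 0$.

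I would first compute the action of $S$ on the generators from the OPE data. The relations $\lm{t_1}{t_1} = k\lambda$ and $\lm{t_2}{t_2} = k(k-1)\lambda$ give simple-pole OPEs $t_i(z)V(w)$ with residues $-V(w)$ and $-(k-1)V(w)$; skew-symmetry of the $\lambda$-bracket then yields $S\cdot t_1 = V$, $S\cdot t_2 = (k-1)V$, and consequently $S\cdot D_1 = \varepsilon V$, $S\cdot D_2 = -V$ via $\varepsilon(k-1)=1$.

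The central step is to establish $S\mathcal{L}_k = 0$ in the algebra of pseudo-differential operators over $\pi_{t_1,t_2}$, where $S$ is extended by $S\partial = 0$; extracting the coefficient of $\partial^{1+\varepsilon-n}$ then delivers $SW_n = 0$. By the Leibniz rule,
\[
S\mathcal{L}_k = \varepsilon\,V\cdot D_2^\varepsilon + D_1\cdot S(D_2^\varepsilon),
\]
and the key input is a quantum Miura-type intertwining
\[
V\cdot \widetilde{D}_2 = \widetilde{D}_1\cdot V,\qquad \widetilde{D}_1 := \partial + t_1,\ \widetilde{D}_2 := \partial - t_2,
\]
obtained from $\partial V = -\frac{1}{k}{:}(t_1+t_2)V{:}$ together with the normal-ordering identities ${:}V t_i{:} - {:}t_i V{:} = n_i V'$ with $n_1 = 1$, $n_2 = k-1$. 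Iterating gives $V\widetilde{D}_2^m = \widetilde{D}_1^m V$ for every integer $m\ge 0$, and the affine combination $D_i = \varepsilon\widetilde{D}_i + (1-\varepsilon)\partial$ lets one transport this to the mixed operator $\mathcal{L}_k = D_1 D_2^\varepsilon$ to achieve the desired cancellation.

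The main obstacle is handling the non-integer exponent $\varepsilon$: the intertwining along integer powers of $\widetilde{D}_2$ is immediate, but extending it to $D_2^\varepsilon$ demands either a formal analytic continuation in the exponent (expanding $D_2^\varepsilon$ as a formal Laurent series in $\partial^{-1}$ with coefficients polynomial in $\varepsilon$ and matching against integer specializations) or an inductive verification using the recursion $W_n = \left[\binom{\varepsilon}{n-1}D_1 + \binom{\varepsilon}{n}D_2\right] D_2^{n-1}\wun$ read off from \eqref{eq: generators Wn}. Either route hinges on tracking normal-ordering corrections carefully, and this bookkeeping is the main technical task of the proof.
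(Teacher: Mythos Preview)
Your reduction is sound: at generic $k$ the coset coincides with $\Ker(S|_{\pi_{t_1,t_2}})$, and since the $W_n$ depend polynomially on $\varepsilon$ the conclusion propagates to all $k\neq 1$ by continuity. Your OPE computations $S\cdot D_1=\varepsilon V$, $S\cdot D_2=-V$ are correct, and the intertwining $V\widetilde{D}_2=\widetilde{D}_1 V$ does hold as an identity of pseudo-differential operators with vertex-algebra coefficients.

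The gap is in the step where you ``transport'' this intertwining to $\mathcal{L}_k=D_1 D_2^\varepsilon$. The affine decomposition $D_i=\varepsilon\widetilde{D}_i+(1-\varepsilon)\partial$ is a linear identity, but composition of operators and the fractional power $D_2^\varepsilon$ are highly nonlinear in the factors, so the intertwining for integer powers of $\widetilde{D}_2$ does not pass to $D_2^\varepsilon$ in any evident way. Moreover, your intertwining relates the field $V$ to the $\widetilde{D}_i$, whereas what you need is the vanishing of $S=Q_{(0)}=\int V$ on the coefficients of $D_1 D_2^\varepsilon$; the passage from one to the other is not supplied. The analytic-continuation idea (specialize $\varepsilon$ to positive integers and interpolate) also stalls: at $\varepsilon=m$ one has $D_i=\partial\pm m\,t_i$, which still does not factor through the $\widetilde{D}_i$, so the intertwining gives no leverage there either.

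The paper takes your second suggested route and actually carries it out: using the commutator identities $[Q_{(m)},D_1]=(m+\varepsilon)Q_{(m-1)}$ and $[Q_{(m)},D_2]=(m+1)Q_{(m-1)}$, it derives closed inductive formulas for $Q_{(0)}D_2^n$, $Q_{(-1)}D_2^n$, and $H_{(-1)}D_2^n$, expresses $S\,W_{n+1}$ as $\sum_{p=0}^n(-1)^p a_p\,D_2^{n-p}\partial^p Q$, and verifies the binomial identity $a_p=0$ for all $p$. This is not routine bookkeeping---the cancellation is a genuine combinatorial identity among binomial coefficients in $\varepsilon$ and $n$---and it is precisely the content you defer. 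So your sketch points in the right direction but stops short of the actual argument.
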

\begin{proof}
Since $W_n$ is defined for all $\kk\in \C$, it suffices to show it for $k \neq 1$ by continuity, and thus $S\ W_n=0$. Since $W_0=\wun$, we will show $S\ W_{n+1}=0$ for $n \geq 0$.
Let us denote 
$$Q=\mathrm{e}^{-\frac{1}{k}(t_1+t_2)}=\mathrm{e}^{\rH},\qquad \rH=-\frac{1}{k}(t_1+t_2).$$
By using the identities  
$$[Q_{(m)},D_1]=(m+\varepsilon)Q_{(m-1)},\qquad [Q_{(m)},D_2]=(m+1)Q_{(m-1)}$$
we obtain 
\begin{align*}
&Q_{(0)} D_1=D_1 Q_{(0)}+\varepsilon Q_{(-1)},\\
&D_1=D_2-(\varepsilon+1)\rH_{(-1)},
\end{align*}
and 
\begin{align}
   \label{eq: An} A_n=&Q_{(0)} D_2^n=\sum_{p=0}^{n-1} (-1)^{p+1}(p+1) \binom{n}{p+1}D_2^{n-1-p}\partial^p Q,\\
   \label{eq: Bn} B_{n}=&Q_{(-1)} D_2^n=\sum_{p=0}^{n} (-1)^{p}(p+1) \binom{n}{p} D_2^{n-p}\partial^p Q,\\
   \label{eq: Cn} C_{n}=&\rH_{(-1)}D_2^n=\sum_{p=0}^n(-1)^p(p+1)\binom{n}{p} D_2^{n-p}\partial^p \rH.
\end{align}
by induction.
It follows from these formulas that
\begin{align}
   \nonumber S\ W_{n+1} &= Q_{(0)}W_{n+1}\\ 
   \nonumber &=\binom{\varepsilon}{n}Q_{(0)}D_1D_2^{n}\wun+\binom{\varepsilon}{n+1}Q_{(0)}D_2^{n+1} \wun\\
   \nonumber &=\binom{\varepsilon}{n}[\underbrace{Q_{(0)},D_1]}_{\varepsilon Q_{(-1)}}D_2^{n}\wun+\binom{\varepsilon}{n}D_1 Q_{(0)} D_2^{n}\wun+\binom{\varepsilon}{n+1}Q_{(0)}D_2^{n+1} \wun\\
   \label{eq: SW intermediate} &=\varepsilon \binom{\varepsilon}{n} B_n
    + \binom{\varepsilon}{n}(D_2-(\varepsilon+1)\rH_{(-1)})A_n 
    +\binom{\varepsilon}{n+1} A_{n+1}.
\end{align}
By using \eqref{eq: An} and \eqref{eq: Cn} and the identity 
$$(p+1)\binom{n}{p+1}\binom{n-1-p}{q}=n \binom{n-1}{p+q}\binom{p+q}{p},$$
we rewrite the third term $\rH_{(-1)} A_n$ as 
\begin{align}
   \nonumber \rH_{(-1)} A_n&=\sum_{p=0}^{n}(-1)^{p+1}(p+1)\binom{n}{p+1}\\
   \nonumber &\hspace{2cm}\sum_{q=0}^{n-1-p-q}(-1)^q\binom{n-1-p}{q}D_2^{n-1-p-q}(\partial^q H)(\partial^p Q)\\
   \nonumber &=\sum_{p=0}^{n}(-1)^{p+q+1}n\binom{n-1}{p+q}\sum_{q=0}^{n-1-p-q}\binom{p+q}{p}D_2^{n-1-p-q}(\partial^q H)(\partial^p Q)\\
   \label{eq: SW intermediate 2}  &=\sum_{p=1}^{n}(-1)^{p}n\binom{n-1}{p-1}D_2^{n-p}\partial^p Q.
\end{align}
Combining \eqref{eq: SW intermediate} and \eqref{eq: SW intermediate 2}, we obtain 
\begin{align*}
    S\ W_{n+1}=\sum_{p=0}^n(-1)^p a_p D_2^{n-p} \partial^p Q
\end{align*}
where 
\begin{align*}
    a_p=&\varepsilon \binom{\varepsilon}{n}(p+1) \binom{n}{p}-\binom{\varepsilon}{n}(p+1) \binom{n}{p+1}\\
    &-\delta_{p\neq 1}(\varepsilon+1)\binom{\varepsilon}{n} n \binom{n-1}{p-1}-\binom{\varepsilon}{n+1}(p+1) \binom{n+1}{p+1}.
\end{align*}
We show $a_p=0$ for all $p$. 
The case $p=0$ follows directly from the identity 
$$\binom{\alpha}{n}=\binom{\alpha-1}{n}+\binom{\alpha-1}{n-1}.$$
The case $p>0$ follows from 
\begin{align*}
    a_p&=\binom{\varepsilon}{n} \Big(\varepsilon(p+1)\binom{n}{p}-(p+1) \binom{n}{p+1}\\
    &\hspace{2cm}-(\varepsilon+1)n \binom{n-1}{p-1}-(\varepsilon-n)\frac{p+1}{n+1}\binom{n+1}{p+1} \Big)\\
    &=\binom{\varepsilon}{n} \left(\varepsilon(p+1)\binom{n}{p}-(p+1) \binom{n}{p+1}-(\varepsilon+1)p \binom{n}{p}-(\varepsilon-n)\binom{n}{p} \right)\\
    &=\binom{\varepsilon}{n} \left(-(p+1)\binom{n}{p+1}+(n-p)\binom{n}{p} \right)\\
    &=0.
\end{align*}
Thus, we obtain $S\ W_{n+1}=0$ for all $n\geq0$.
\end{proof}

Now, we are ready to show the following main result in the case $\g=\gl_{1|1}$.
\begin{theorem}\label{thm: pdiff operator}
For generic $\kk\notin \Q$ or $\kk=\kk_c$, the elements $W_n$ ($n \geq 1$) strongly generate the coset $C^\kk(\gl_{1|1})$.
In particular, the center $\cent(V^{\kk_c}(\gl_{1|1}))=C^{\kk_c}(\gl_{1|1}) $ is strongly generated by the coefficients $W_n$ of the pseudo-differential operator
$$(\partial-u_1)(\partial+u_2)^{-1}=\wun+ \sum_{n=1}^\infty W_n \partial^{-n}.$$
\end{theorem}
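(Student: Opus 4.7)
The containment $\langle W_n : n \geq 1 \rangle \subseteq C^\kk(\gl_{1|1})$ is provided by Proposition \ref{prop: gen of gl11 coset}. To prove the reverse inclusion, I would first establish strong generation for generic $\kk \notin \Q$ via a character/associated-graded comparison, then deduce the critical level case by specialization.

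For generic $\kk$, Theorem \ref{thm: Wakimoto for sCoset}(1) identifies $C^\kk(\gl_{1|1}) \cong \Ker S \subset \pi_{t_1,t_2}$. The first step is to compute $\ch C^\kk(\gl_{1|1})$ via the Heisenberg coset decomposition of $V^\kk(\gl_{1|1})$: extracting the $e_{2,2}$-charge-zero sector from the bigraded character
$$\ch V^\kk(\gl_{1|1})(q,z) = \prod_{n \geq 1}\frac{(1+zq^n)(1+z^{-1}q^n)}{(1-q^n)^2}$$
and dividing by $\ch \pi^\kk_{e_{2,2}} = 1/(q;q)_\infty$, a Jacobi-triple-product identity yields
$$\ch C^\kk(\gl_{1|1})(q) = \frac{\sum_{m \geq 0} (-1)^m q^{m(m+1)/2}}{(q;q)_\infty^2},$$
matching part (3) of Theorem \ref{thm: A} at $n = 1$. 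The second step is to match this with the character of $\langle W_n \rangle$. Equip $\pi_{t_1,t_2}$ with the PBW-type filtration by total polynomial degree in the generators $\{\pd^j t_i\}$, so that $\gr \pi_{t_1,t_2}$ is the polynomial algebra $\C[\pd^j t_1, \pd^j t_2]$. From the explicit formula $W_n = \binom{\varepsilon}{n-1} D_1 D_2^{n-1}\wun + \binom{\varepsilon}{n} D_2^n \wun$, read off the leading degree-$n$ symbol of $W_n$—a non-zero combination of $:t_1 t_2^{n-1}:$ and $:t_2^n:$ with coefficients non-degenerate in $\varepsilon$—and identify the leading part of the screening $S$ as a classical Poisson-type derivation. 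Verify that $\gr \Ker S$ is generated, as a differential algebra, by these leading symbols together with $\pd$; by the standard lifting principle, strong generation passes to the filtered algebra $C^\kk$ itself.

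For the critical level ($k=0$, $\varepsilon=-1$), the $W_n$ specialize consistently and lie in $\cent(V^{\kk_c}(\gl_{1|1})) = C^{\kk_c}(\gl_{1|1})$ by Theorem \ref{thm: Wakimoto for sCoset}(2). Strong generation then persists by a deformation argument: both the character formula for $C^\kk$ and the structure constants expressing any fixed weight-$N$ basis of $C^\kk$ in terms of normally ordered monomials in $\{W_n\}$ depend polynomially on $\kk$, so the generic strong-generation relations remain valid at $\kk = \kk_c$.

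The principal obstacle is the associated-graded step at generic $\kk$: since $S = \int e^{-(t_1+t_2)/k}\dz$ involves an exponential whose expansion spans all polynomial degrees, the naive degree filtration is not preserved by $S$, and one must instead use a weighted filtration—tracking both $t$-degree and $\pd$-weight, adapted to the Heisenberg inner product—on which $S$ admits a tractable leading-order action. Alternatively, one may bypass the $\gr$-argument via a direct weight-by-weight dimension count, enumerating the OPE relations among the $W_n$'s sufficiently to reduce the character of the free normally-ordered polynomial algebra in $\{W_n\}$ down to $\ch C^\kk(\gl_{1|1})$.
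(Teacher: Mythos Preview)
Your approach differs substantially from the paper's, and both parts have genuine gaps.

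\textbf{Generic level.} You correctly identify the obstacle yourself: the screening operator $S=\int e^{-(t_1+t_2)/k}\,\dz$ does not respect any obvious polynomial-degree filtration, so the proposed passage to $\gr\Ker S$ is not justified. Your suggested fixes (a weighted filtration, or a direct dimension count on relations among the $W_n$) are not carried out, and in fact neither is straightforward here. The paper avoids this entirely: it constructs explicit corrections $\widetilde W_1,\widetilde W_2,\widetilde W_3$ of $W_1,W_2,W_3$, computes their $\lambda$-brackets to exhibit a Heisenberg~$\oplus$~Virasoro~$\oplus$~weight-$3$-primary structure, and then invokes the Creutzig--Linshaw result \cite{CL4} that such data strongly generate the coset. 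A triangular change of variables then shows the $W_n$'s themselves strongly generate.

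\textbf{Critical level.} Your specialization argument is not valid as stated. You claim that the structure constants expressing a weight-$N$ basis of $C^\kk$ in terms of normally ordered monomials in $\{W_n\}$ depend polynomially on $\kk$; but what you actually know is only that the monomials themselves depend polynomially on $\kk$ and span generically. The dimension of their span is upper-semicontinuous, so it can \emph{drop} at $\kk_c$, and nothing you have written prevents this. Concretely, a matrix of rank $d$ for generic $\kk$ may have rank $<d$ at a special point, and then the inverse expressions acquire poles there. The paper proceeds in the opposite order: it proves the critical case \emph{first}, directly, by computing the symbols $\overline W_n=(-1)^n(u_1+u_2)u_2^{\,n-1}$ in the associated graded of $\pi^0_\h$, recognizing them as generators of the affine supersymmetric polynomial ring $\Lambda_{1|1}^{\aff}$, and invoking the result of Molev--Mukhin \cite{MM} that $\gr\cent(V^{\kk_c}(\gl_{1|1}))\simeq\Lambda_{1|1}^{\aff}$. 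This gives strong generation at $\kk_c$ without any deformation argument.

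In short, both halves of your plan rely on steps you flag as incomplete or whose validity you assert without proof; the paper instead imports two external structural results (\cite{MM} for $\kk_c$, \cite{CL4} for generic $\kk$) that do the heavy lifting.
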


\begin{proof}
We first show the assertion for $\kk=\kk_c$, i.e., $k=0$.
We consider the PBW filtration on $V^{\kk}(\gl_{1|1})$, and the filtration on $\mathcal{A}_\psi \otimes \pi^k_\h$ induced from the filtration of the differential operators on $\mathcal{A}_\psi$ and the PBW filtration on $\pi^k_\h$. 
Then, it is clear that the embedding $\Psi$ in \eqref{eq: Wakimoto for gl11} preserves these filtrations and thus induces an embedding of their associated graded Poisson vertex superalgebras 
\begin{align*}
   \overline{\Psi}\colon \gr V^\kk(\gl_{1|1}) \hookrightarrow \gr \mathcal{A}_\psi \otimes \gr \pi^k_\h,
\end{align*}
and thus 
$\overline{\Psi}\colon \gr C^\kk(\gl_{1|1}) \hookrightarrow \pi_{t_1,t_2}$.
By \eqref{eq: generators Wn}, it is clear that the image $\overline{W}_n$ of $W_n$ in the associated graded space is
$$\overline{W}_n=(-\varepsilon)^n \left(\binom{\varepsilon}{n-1}t_1-\binom{\varepsilon}{n}t_2\right)t_2^{n-1},$$
which are obtained in the expansion 
\begin{align*}
    (z+\varepsilon t_1)(z-\varepsilon t_2)^\varepsilon=\left(\wun +\sum_{n=1}^\infty \overline{W}_nz^{-n}\right) z^{1+\varepsilon}
\end{align*}
By setting $\kk=\kk_c$, we have  
$$\overline{W}_n=(-1)^n(t_1+t_2) t_2^{n-1}=(-1)^n(u_1+u_2) v^{n-1}.$$
Hence, they are supersymmetric polynomials associated with the superspace $\C^{1|1}$ (see \S \ref{sec: supersymmetric polynomials} below for details) and thus generate the affine supersymmetric polynomials $\Lambda_{1|1}^{\aff}$ as a differential algebra, i.e., 
$$\langle \overline{W}_n \mid n\geq 0\rangle_{\partial\text{-alg}} \simeq \Lambda_{1|1}^{\aff} \subset \gr C^\kk(\gl_{1|1}).$$
Since $\Lambda_{1|1}^{\aff} \simeq \gr C^\kk(\gl_{1|1})$ by \cite{MM}, $\overline{W}_n$ $(n\geq0)$ generate $\gr C^\kk(\gl_{1|1})$. Thus, $W_n$'s generate $C^\kk(\gl_{1|1})$.
We have $t_1=u_2$, $t_2=u_2$ at $\kk=\kk_c$ and thus $C^{\kk_c}(\gl_{1|1})\subset \pi^0_{\h}=\cent(\mathcal{A}_\psi \otimes \pi^0_\h)$, which implies 
$$\cent(V^{\kk_c}(\gl_{1|1}))\supset C^{\kk_c}(\gl_{1|1})$$
and thus $\cent(V^{\kk_c}(\gl_{1|1}))= C^{\kk_c}(\gl_{1|1})$.
This completes the assertions for $\kk=\kk_c$.

Next, we show the assertion for $\kk\notin \Q$. 
The elements
\begin{align*}
    &\widetilde{W}_1=W_1,\\
    &\widetilde{W}_2=-\frac{1}{\varepsilon(\varepsilon+1)}\left(W_2-\frac{\varepsilon}{2(\varepsilon+1)}W_1^2-\frac{\varepsilon}{2}\partial W_1\right),\\
    &\widetilde{W}_3=W_3+(\varepsilon-1)\Big(\frac{1}{2}\varepsilon(\varepsilon+1)\partial W_2-\frac{\varepsilon}{6}\partial^2 W_1+\varepsilon W_2W_1\\
    &\hspace{4cm}-\frac{\varepsilon}{2(\varepsilon+1)}W_1 \partial W_1-\frac{\varepsilon}{6(\varepsilon+1)^2}W_1^3\Big)
\end{align*}
satisfy the $\lambda$-brackets
\begin{align*}
    &[\widetilde{W}_1{}_\lambda \widetilde{W}_1]=\varepsilon(\varepsilon+1)^2 \lambda,\quad   [\widetilde{W}_1{}_\lambda \widetilde{W}_2]= [\widetilde{W}_1{}_\lambda \widetilde{W}_3]=0,\\
    &[ \widetilde{W}_2 {}_\lambda \widetilde{W}_2]=\partial \widetilde{W}_2+2 \widetilde{W}_2 \lambda-\lambda^3,\\
    &[ \widetilde{W}_2{}_\lambda \widetilde{W}_3]=\partial \widetilde{W}_3+3 \widetilde{W}_3.
\end{align*}
Hence, the coset subalgebra $C^\kk(\gl_{1|1})$ is strongly generated by  
$$\widetilde{W}_1,\ \widetilde{W}_2,\ \widetilde{W}_3,\  \widetilde{W}_{n+1}=\widetilde{W}_{3(1)}\widetilde{W}_n\ (n\geq3)$$
by \cite{CL4}. Since $\kk \notin \Q$, $\overline{W}_n \neq0$.
Then  it follows from induction that 
$$\overline{W}_{3(1)}\overline{W}_{n}=a_{n}\overline{W}_{n+1}+f_n(\overline{W}_{1},\cdots,\overline{W}_{n})$$
for some $a_n\in \Q(k)$ and $f_n(z_1,\cdots,z_n)$ in $\Q(k)[z_1,\cdots,z_n]$. 
It follows by induction that  
$${W}_n=a_n \widetilde{W}_n+F_n(\widetilde{W_1},\cdots,\widetilde{W}_{n-1})$$
for some differential polynomial $F_n(z_1,\cdots,z_{n-1})$.
Thus ${W}_n$ ($n\geq1$) strongly generate $C^\kk(\gl_{1|1})$ as desired. 
This completes the proof.
\end{proof}
\begin{remark}
\textup{By \cite{CL4}, $\widetilde{W}_1$, $\widetilde{W}_2$, $\widetilde{W}_3$ already strongly generate $C^\kk(\gl_{1|1})$.} 
\end{remark}

\subsection{General case}\label{sec: general coset}
To describe the Heisenberg coset $C^\kk(\gl_{n|1})$ in general, we introduce the following pseudo-differential operator and the coefficients in the expansion:
\begin{align}\label{eq: gln1 ps diffop}
    \mathcal{L}_k=D_1\cdots D_n D_{n+1}^{\varepsilon}=\left(\wun+\sum_{p=1}^\infty W_p \partial^{-p}\right)\partial^{n+\varepsilon} 
\end{align}
where 
$$D_1=\partial+\varepsilon t_1,\cdots, D_n=\partial+\varepsilon t_n,\ D_{n+1}=\partial-\varepsilon t_{n+1},\qquad \varepsilon=\frac{1}{-1+(k+\dC)}.$$
\begin{theorem}\label{prop: coset for Wsalg}
For $k\neq-\dC+1$, the elements $W_p$ $(p \geq 1)$ lie in the coset $C^\kk(\gl_{n|1})$. Moreover, they strongly generate $C^\kk(\gl_{n|1})$ for generic $\kk$.
\end{theorem}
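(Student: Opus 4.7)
The argument divides into two parts: establishing $W_p \in C^\kk(\gl_{n|1})$ for every $p \geq 1$ at all levels $k \neq -\dC + 1$, and establishing strong generation at generic $\kk$. For the first part, Theorem \ref{thm: Wakimoto for sCoset}(1) realizes $C^\kk(\gl_{n|1})$ inside $\pi_{\tori}^{k+\dC}$ as the joint kernel of the screening operators $S_1, \ldots, S_n$, so it suffices to verify $S_i(\mathcal{L}_k) = 0$ for each $i$, interpreted as saying that the screening annihilates every coefficient $W_p$ in the expansion \eqref{eq: gln1 ps diffop}.

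The observation that makes this tractable is a factorization principle: since the $t_j$ are mutually orthogonal under the $\lambda$-bracket by \eqref{eq: OPE for ti's}, the screening $S_i$ commutes with every Miura factor $D_j$ (and with $D_{n+1}^\varepsilon$ when $i < n$) except the two whose Heisenberg fields appear in the exponent of $S_i$. Consequently, the identity $S_i(\mathcal{L}_k) = 0$ reduces to a purely local identity involving only those two factors. For $i = 1, \ldots, n-1$ the local identity is $S_i(D_i D_{i+1}) = 0$, which is the classical bosonic quantum Miura transformation for $\W^\kk(\gl_n)$ and follows from a short direct computation using the standard commutators of the screening mode with $D_i$ and $D_{i+1}$. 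For $i = n$ the local identity is $S_n(D_n D_{n+1}^\varepsilon) = 0$, which is precisely the content of Proposition \ref{prop: gen of gl11 coset} after relabeling $(t_1, t_2) \mapsto (t_n, t_{n+1})$, since the signs in the Miura factors and in the exponent of the screening match those of the $\gl_{1|1}$ setting.

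For strong generation at generic $\kk$, the plan is to pass to the associated graded Poisson vertex algebra induced by the PBW-type filtration on $\pi_{\tori}^{k+\dC}$. The image $\overline{W}_p$ is the $z^{-p}$ coefficient in the classical symbol $(z + \varepsilon t_1) \cdots (z + \varepsilon t_n)(z - \varepsilon t_{n+1})^\varepsilon$, and for generic $\varepsilon$ these elements together with their derivatives should generate $\gr C^\kk(\gl_{n|1})$ as a commutative differential algebra. To verify this, the proposal is to match the $W_p$ with an intrinsic set of strong generators --- for instance, dual generators arising from a Feigin--Semikhatov-type duality or of the form supplied by \cite{CL4} --- via a triangular change of basis whose leading coefficients remain invertible for generic $\kk$; strong generation then lifts from the associated graded to $C^\kk(\gl_{n|1})$. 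The principal technical obstacle is the careful handling of the fractional power $D_{n+1}^\varepsilon$: the commutator identities of Proposition \ref{prop: gen of gl11 coset}, such as $[Q_{(m)}, D_{n+1}] = (m+1)Q_{(m-1)}$ and the inductive formulas for the iterated products, must be extended consistently to arbitrary powers of $D_{n+1}$ viewed as a formal pseudo-differential operator, after which the factorization principle reduces every higher-rank computation to the already-solved $\gl_{1|1}$ local identity.
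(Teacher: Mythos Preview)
Your treatment of the first claim is correct and follows the paper's argument: both reduce $[S_i,\mathcal{L}_k]=0$ via the factorization principle to the local identities $[S_i,D_iD_{i+1}]=0$ for $i<n$ (the classical $\gl_2$ Miura computation, cf.\ \cite{AM,FL}) and $[S_n,D_nD_{n+1}^\varepsilon]=0$ (Proposition~\ref{prop: gen of gl11 coset}). One minor point: the joint-kernel characterization in Theorem~\ref{thm: Wakimoto for sCoset}(1) holds only generically, so extending to all $k\neq-\dC+1$ requires the continuity argument the paper states but you leave implicit.

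For strong generation, your outline points in the right direction but omits the step that carries the weight. The paper does not merely invoke an abstract triangular change of basis: it writes down explicit modifications $\widetilde{W}_1,\widetilde{W}_2,\widetilde{W}_3$ of $W_1,W_2,W_3$ and computes their $\lambda$-brackets, verifying that $\widetilde{W}_1$ is Heisenberg, $\widetilde{W}_2$ is Virasoro with an explicit central charge $c_n(\varepsilon)$, and $\widetilde{W}_3$ is a weight-$3$ primary for $\widetilde{W}_2$. These relations are precisely the input that the structure theorem of \cite{CL4} demands, and only after they are checked can one conclude that $\widetilde{W}_1,\widetilde{W}_2,\widetilde{W}_3$ together with the iterates $\widetilde{W}_{p+1}:=\widetilde{W}_{3(1)}\widetilde{W}_p$ strongly generate. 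The triangular inversion (as in the proof of Theorem~\ref{thm: pdiff operator}) then swaps $\widetilde{W}_p$ for $W_p$. Without the explicit OPE verification, \cite{CL4} is not applicable, so this is a genuine gap in your sketch.

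Your closing paragraph is misdirected. The fractional-power computations for $D_{n+1}^\varepsilon$ are completely encapsulated in Proposition~\ref{prop: gen of gl11 coset}; the factorization principle ensures that nothing further is needed in higher rank. There is no residual technical obstacle with pseudo-differential operators once the $\gl_{1|1}$ local identity is in hand.
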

\begin{proof}
By Theorem \ref{thm: Wakimoto for sCoset}, it suffices to show $S_i W_p=0$ for $i=1,\cdots n$ and all $p\geq1$ at generic levels $k$, which is equivalent to $[S_i,\mathcal{L}]=0$ thanks to $[S_i,\partial]=0$.
Since we have $[S_i,D_j]=0$ for $j\neq i,i+1$ and 
\begin{align*}
    [S_i,\mathcal{L}]&=[S_i,D_1]D_2\cdots D_n D_{n+1}^\varepsilon+D_1[S_i,D_2]\cdots D_n D_{n+1}^\varepsilon\\
     &\hspace{2cm}+\cdots+ D_1D_2\cdots D_n [S_i,D_{n+1}^\varepsilon],
\end{align*}
we have
\begin{align*}
    [S_i,\mathcal{L}]=
    \begin{cases} D_1\cdots D_{i-1}[S_i,D_i D_{i+1}]D_{i+2}\cdots D_n D_{n+1}^\varepsilon, &(i\neq n),\\
    D_1\cdots D_{n-1} [S_i,D_n D_{n+1}^\varepsilon], &(i= n).
    \end{cases} 
\end{align*}
For $i\neq n$, $[S_i,D_i D_{i+1}]=0$ is equivalent to the coefficients
$$D_i D_{i+1}=\partial^2+\varepsilon(t_i+t_{i+1})\partial +\varepsilon(\varepsilon t_i t_{i+1}+\partial t_{i+1})$$
are elements of the $\W$-algebra $\W^{\kk'}(\gl_2)$ with $k'+2=k+\dC$, which is well known, see for example \cite{AM, FL}. Similarly, the case $i=n$ follows from Proposition \ref{prop: gen of gl11 coset}. 

Next, we show the assertion for $\kk\notin \Q$. 
The elements
\begin{align*}
    &\widetilde{W}_1=W_1,\\
    &\widetilde{W}_2=-\frac{1}{\varepsilon(\varepsilon+1)}\left(W_2-\frac{\varepsilon+n-1}{2(\varepsilon+n)}W_1^2-\frac{\varepsilon+n-1}{2}\partial W_1\right),\\
    &\widetilde{W}_3=W_3+(\varepsilon+n-2)\Big(\frac{1}{2}\varepsilon(\varepsilon+1)\partial W_2-\frac{\varepsilon+n-1}{6}\partial^2 W_1+\frac{\varepsilon(\varepsilon+1)}{\varepsilon+n}W_2W_1\\
    &\hspace{3cm}-\frac{\varepsilon+n-1}{2(\varepsilon+n)}W_1 \partial W_1-\frac{\varepsilon+n-1}{6(\varepsilon+n)^2}W_1^3\Big)
\end{align*}
satisfy the $\lambda$-brackets
\begin{align*}
    &[\widetilde{W}_1{}_\lambda \widetilde{W}_1]=\varepsilon(\varepsilon +1)(\varepsilon +n) \lambda,\quad   [\widetilde{W}_1{}_\lambda \widetilde{W}_2]= [\widetilde{W}_1{}_\lambda \widetilde{W}_3]=0,\\
    &[ \widetilde{W}_2 {}_\lambda \widetilde{W}_2]=\partial \widetilde{W}_2+2 \widetilde{W}_2 \lambda-\frac{c_n(\varepsilon)}{2}\lambda^3,\\
    &[ \widetilde{W}_2{}_\lambda \widetilde{W}_3]=\partial \widetilde{W}_3+3 \widetilde{W}_3,
\end{align*}
where 
$$c_n(\varepsilon)=-\frac{n(\varepsilon+n-1)(2\varepsilon+n+1)}{\varepsilon(\varepsilon+1)}.$$
Hence, the coset subalgebra $C^\kk(\gl_{n|1})$ is strongly generated by  
$$\widetilde{W}_1,\ \widetilde{W}_2,\ \widetilde{W}_3,\  \widetilde{W}_{n+1}=\widetilde{W}_{3(1)}\widetilde{W}_n\ (n\geq3)$$
by \cite{CL4}. Following the same inductive argument as in the proof of Theorem \ref{thm: pdiff operator}, we obtain that ${W}_n$ ($n\geq1$) strongly generate $C^\kk(\gl_{n|1})$ as desired. 
This completes the proof.
\end{proof}
In contrast to the case $\g=\gl_{1|1}$, the strong generating property at the critical level for the general case $\g=\gl_{n|1}$ in Theorem \ref{prop: coset for Wsalg} is much more subtle and we will prove it in \S \ref{sec: Center} below.

Recall that given a vertex superalgebra $V$, Li filtration $F^\bullet V$ is a decreasing filtration defined by
\begin{align*}
    F^p V=\left\{a^1_{(-n_1-1)}\cdots a^r_{(-n_r-1)}b; \begin{array}{l}
      a^1,\dots,a^r, b \in V  \\
      n_i \geq0,\ n_1+\cdots +n_r \geq p
    \end{array} \right\}.
\end{align*}
The associated graded vector (super)space $\gr V=\oplus_p F^p V/F^{p+1}V$ has a structure of Poisson vertex (super)algebra, see \cite{Ar12, Li}. In particular, the quotient $F^0 V/F^{1}V$, denoted by $R_{V}$ has a structure of Poisson superalgebra, called the $C_2$-algebra.

Moreover, given a homomorphism $f\colon V\rightarrow W$ of vertex superalgebras, the above association induces a homomorphism $\overline{f}\colon \gr V\rightarrow \gr W$ of Poisson vertex superalgebras. We apply this construction to $\Psi$ in Theorem \ref{thm: Wakimoto for sW} and obtain an embedding 
\begin{align*}
    \overline{\Psi}\colon \gr \W^\kk(\gl_{n|1}) \hookrightarrow \gr \mathcal{A}_{\psi}\otimes \gr \pi^{k+\dC}_{\h}.
\end{align*}
Note that 
$$R_{\mathcal{A}_{\psi}\otimes \pi^{k+\dC}_{\h}}\simeq R_{\mathcal{A}_{\psi}}\otimes R_{\pi^{k+\dC}_{\h}}\simeq \bigwedge{}_{\overline{\psi},\overline{\psi}^*} \otimes S(\h)$$
where $\bigwedge{}_{\overline{\psi},\overline{\psi}^*}$ is the exterior algebra generated by  $\overline{\psi},\overline{\psi}^*$ and $S(\h)$ is the symmetric algebra associated with $\h$.

The following will be important later to construct the generators of $\W^\kk(\gl_{n|1})$. 
\begin{proposition}\label{prop: algebraic independence in cosets}
For $k\neq-\dC+1$, the images $\overline{W}_1,\cdots,\overline{W}_n$ in $\bigwedge{}_{\overline{\psi},\overline{\psi}^*} \otimes S(\h)$ are algebraically independent.
\end{proposition}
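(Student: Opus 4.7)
My plan is to compute the images $\overline{W}_p$ explicitly in the $C_2$-algebra $R_V=\bigwedge_{\overline{\psi},\overline{\psi}^*}\otimes S(\h)$ and then reduce algebraic independence to that of the elementary symmetric polynomials via a specialization argument.

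Passing to $R_V=V/F^1V$ amounts to discarding every derivative of a generator, since any $u_{(-n-1)}v$ with $n\ge 1$ lies in $F^1V$. In particular, all noncommutative corrections in the expansion of $D_1\cdots D_nD_{n+1}^\varepsilon$ coming from the brackets $[\partial,t_i]=\partial t_i$ vanish modulo $F^1V$. Consequently $\overline{W}_p$ coincides with the coefficient of $z^{n+\varepsilon-p}$ in the commutative expression
$$\prod_{i=1}^n(z+\varepsilon t_i)\cdot(z-\varepsilon t_{n+1})^\varepsilon,$$
where $(z-\varepsilon t_{n+1})^\varepsilon$ is understood via the formal binomial series. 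A direct expansion using the elementary symmetric polynomials $e_k(t_1,\ldots,t_n)$ yields, for $1\le p\le n$,
$$\overline{W}_p \;=\; \sum_{k=0}^{p}(-1)^{p-k}\,\varepsilon^{p}\binom{\varepsilon}{p-k}\,e_k(t_1,\ldots,t_n)\,t_{n+1}^{\,p-k}.$$

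To conclude, I would apply the projection $\pi\colon \bigwedge_{\overline{\psi},\overline{\psi}^*}\otimes S(\h)\twoheadrightarrow S(\h)$ that kills the fermions; under \eqref{eq: def of t's} this reduces $\overline{t}_i$ to $\overline{u}_i$ for all $i$, so each $\pi(\overline{W}_p)$ becomes the corresponding polynomial in $\C[\overline{u}_1,\ldots,\overline{u}_{n+1}]$. Further specializing to $\overline{u}_{n+1}=0$ leaves only the term $k=p$ in the above sum, namely $\varepsilon^p\,e_p(\overline{u}_1,\ldots,\overline{u}_n)$. Under the hypothesis $k\neq -\dC+1$ we have $\varepsilon\neq 0$, and so these are nonzero scalar multiples of $e_1,\ldots,e_n$, which are algebraically independent in $\C[\overline{u}_1,\ldots,\overline{u}_n]$ by the fundamental theorem of symmetric functions. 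A hypothetical polynomial relation $P(\overline{W}_1,\ldots,\overline{W}_n)=0$ in $R_V$ would therefore specialize, under the composition $\pi$ followed by $\overline{u}_{n+1}\mapsto 0$, to $P(\varepsilon e_1,\ldots,\varepsilon^n e_n)=0$, forcing $P\equiv 0$.

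The only delicate point is the leading-symbol computation of Step one: one must verify that the derivative-free part of the formal expansion $D_{n+1}^\varepsilon=\sum_{k\ge 0}c_k\,\partial^{\varepsilon-k}$, determined inductively by $D_{n+1}\cdot D_{n+1}^\varepsilon=D_{n+1}^{\varepsilon+1}$, equals $\binom{\varepsilon}{k}(-\varepsilon t_{n+1})^k$. This is a straightforward induction on $k$ parallel to that appearing in the proof of Proposition \ref{prop: gen of gl11 coset}; once it is in place, the remainder of the argument is a transparent specialization.
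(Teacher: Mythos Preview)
Your argument is correct and follows essentially the same route as the paper: both reduce to the specialization that kills the fermionic variables and $u_{n+1}$, landing in $\C[u_1,\dots,u_n]$ where the $\pi(\overline{W}_p)$ become scalar multiples of the elementary symmetric polynomials. The paper performs this in a single composite quotient $\bigwedge_{\overline{\psi},\overline{\psi}^*}\otimes S(\h)\twoheadrightarrow \C[e_{1,1},\dots,e_{n,n}]$ and reads off the identity $(z+\varepsilon e_{1,1})\cdots(z+\varepsilon e_{n,n})=z^n+\pi(\overline{W}_1)z^{n-1}+\cdots+\pi(\overline{W}_n)$ directly, whereas you compute the full intermediate form of $\overline{W}_p$ first and then specialize in two steps; the content is the same.
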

\proof
It suffices to show the algebraic independence after the canonical quotient
$$\pi\colon \bigwedge{}_{\overline{\psi},\overline{\psi}^*} \otimes S(\h)\twoheadrightarrow S(\h) \twoheadrightarrow \C[e_{1,1},\cdots e_{n,n}].$$ 
It is clear from \eqref{eq: def of t's} and \eqref{eq: gln1 ps diffop} that 
\begin{align*}
    (z+\varepsilon e_{1,1})\cdots (z+\varepsilon e_{n,n})=z^n+ \pi(\overline{W}_1)z^{n-1}+\cdots + \pi(\overline{W}_n)
\end{align*}
where $z$ is an indeterminate. Thus, the algebraic independence among the elementary symmetric polynomials implies that among   $\pi(\overline{W}_i)$'s.

\endproof
\section{Duality between \tmath{$\W^{\kk}(\sll_{n|1})$} and \tmath{$\subW{\op{\kk}}$}}\label{sec: duality and strong generators}
In order to describe the structure of $\W^{\kk}(\gl_{n|1})$, we will use a duality \cite{CGN21,FS} to reduce it to its dual, namely the subregular $\W$-algebra $\W^{\check{\kk}}(\gl_{n},\OO_\sub)$.

\subsection{Subregular \tmath{$\W$}-algebras}\label{sec: Subregular Walg}
Let $\g=\gl_{n}$ ($n>1$) and $\g=\nil^+ \oplus \h \oplus \nil^-$ be the standard triangular decomposition. 
Then $\h$ is spanned by the $e_{1,1},\cdots, e_{n,n}$, $\nil^+$ is generated by $e_{1,2},\cdots,e_{n-1,n}$ which are root vectors corresponding to simple roots $\alpha_1,\cdots, \alpha_{n-1}$. 
We will identify $\h$ with its dual $\h^*$ by the bilinear form $\kk_V$ and thus 
$$\alpha_1=e_{1,1}-e_{2,2},\cdots, \alpha_{n-1}=e_{n-1,n-1}-e_{n,n},$$
which are naturally identified with the coroots $h_i$, i.e., $h_i=\alpha_i$ for $i=1,\cdots,n$.

Take the subregular nilpotent element
\begin{align}\label{eq: subregular nilp}
    f=e_{2,1}+e_{3,2}+\cdots +e_{n,n-1}
\end{align}
in $\gl_{n}$ (resp. $\sll_{n}$)
and the good grading $\Gamma$ given by the weighted Dynkin diagram
\begin{center} 
\begin{tikzpicture}
{\dynkin[root
radius=.1cm,labels={\alpha_1,\alpha_{n-2},\alpha_{n-1}},labels*={1,1,0},edge length=1cm]A{o.oo}};
\end{tikzpicture}
\end{center}
and denote by $\W^{{\kk}}(\gl_{n},\OO_\sub)$ (resp. $\W^{{\kk}}(\sll_{n},\OO_\sub)$) the corresponding $\W$-algebra. 
It is called the subregular $\W$-algebra at level ${\kk}$.  

Let $\mathcal{A}$ denote the $\beta\gamma$-system vertex algebra generated by (even) fields $a, a^*$ satisfying the $\lambda$-brackets
\begin{align}\label{eq: betagamma system}
\lm{a}{a^*}=1,\quad \lm{a}{a}=\lm{a^*}{a^*}=0.
\end{align}
The localization $\mathcal{A}^\times$ by taking the inverse $a$ is realized inside the lattice vertex algebra $V_L$
$$\mathcal{A}^\times\simeq \bigoplus_{n\in \Z}\pi^{x,y}_{n(x+y)} \subset V_L$$
associated with the integer lattice $L=\Z x\oplus \Z y$ equipped with a bilinear form $(nx+m y,n'x+m'y)=nn'-mm'$. The embedding $\mathcal{A}\hookrightarrow  V_L$ has a following characterization 
\begin{align*}
    \mathcal{A}\simeq \Ker S_n  \subset \mathcal{A}^\times,\qquad S_n=\int Y(\hwt{x},z)\dz,
\end{align*}
which sends
\begin{align*}
    a \mapsto \hwt{x+y},\quad a^* \mapsto-x \hwt{x+y}.
\end{align*}
Let $\pi_\h^{{k}+{\dC}}$ be the Heisenberg vertex algebra associated with ${\h}\subset {\g}$. By definition, it is generated by the fields $u_1,\cdots,u_{n}$ which correspond to $e_{1,1},\cdots, e_{n,n}$ and satisfy the $\lambda$-brackets
$$\lm{u_i}{u_j}=(k+\dC)\kk_V(u_i,u_j)\lambda.$$

Let us introduce the following screening operators 
\begin{align}
    S_i:=\int Y(P_i \hwt{-\frac{1}{k+\dC}\alpha_i},z)\dz \colon \mathcal{A}^\times\otimes \pi^{k+\dC}_{\h} \rightarrow \mathcal{A}^\times\otimes \pi^{k+\dC}_{\h,-\alpha_i}
\end{align}
for $i=1,\cdots, n-1$ with $P_i=\wun$ for $i=1,\cdots,n-2$ and $P_{n-1}=a$ for $i=n$.
Then, $\W^{{\kk}}(\gl_{n},\OO_\sub)$ is realized in $\mathcal{A}^\times \otimes \pi_\h^{{k}+{\dC}}$ as follows.

\begin{proposition}[\cite{CGN21}]\label{Wakimoto for subreg 2}
There is an embedding of vertex algebras
\begin{align*}
    \Psi\colon \W^\kk(\gl_{n},\OO_\sub) \hookrightarrow \mathcal{A}^\times \otimes \pi^{k+\dC}_{\h}
\end{align*}
which depends continuously on $\kk$ so that the generic image is characterized as the joint kernel of the screening operators
\begin{align*}
    \W^\kk(\gl_{n},\OO_\sub) \overset{\Psi}{\simeq} \bigcap_{i=1}^n \Ker S_i \subset \mathcal{A}^\times\otimes \pi^{k+\dC}_{\h}.
\end{align*}    
\end{proposition}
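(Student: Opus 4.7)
The plan is to derive the realization by applying the BRST reduction functor to a Wakimoto-type embedding of $V^\kk(\gl_n)$ and tracking the screening operators through the cohomology. I would first invoke the classical Feigin--Frenkel--Wakimoto embedding $V^\kk(\gl_n) \hookrightarrow \mathcal{A}_{\nil^+} \otimes \pi_\h^{k+\dC}$, where $\mathcal{A}_{\nil^+}$ denotes the tensor product of $\beta\gamma$-systems indexed by the positive roots of $\gl_n$. This embedding depends continuously on $\kk$, and its generic image is cut out by a family of simple-root screenings $S_i^{\mathrm{FF}}$.

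Next, I would form the BRST complex \eqref{BRST complex} with $f$ as in \eqref{eq: subregular nilp} and the $\Z$-grading $\Gamma$ prescribed by the weighted Dynkin diagram $(1,\dots,1,0)$, and compute the cohomology after substituting the Wakimoto embedding. The BRST differential couples each fermion pair $(\psi_\alpha,\psi_\alpha^*)$ ($\alpha \in S_+$) with the corresponding $\beta\gamma$-pair $(a_\alpha,a_\alpha^*)$; a Koszul-type spectral sequence organized by $\Gamma$-degree shows that nearly all such pairs cancel acyclically. Exactly one $\beta\gamma$-pair survives and produces the factor $\mathcal{A}$ in the statement: the pair attached to the root $\alpha_{n-1}$ sitting in $\Gamma$-degree $0$, which is precisely what distinguishes the subregular setting from the regular one (where no such root exists and no $\beta\gamma$-system survives, cf.\ Theorem \ref{thm: Wakimoto for sW} in the analogous super setting). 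Localizing to $\mathcal{A}^\times$ is a natural step that allows screenings to be written as vertex operators in bosonized form.

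Then, I would trace which Feigin--Frenkel screenings $S_i^{\mathrm{FF}}$ commute with the BRST differential and descend to the cohomology. For $i=1,\dots,n-2$, $S_i^{\mathrm{FF}}$ descends unchanged to give the simple-root screening $S_i$ with $P_i=\wun$. For $i=n-1$, the Wakimoto screening corresponding to the distinguished $\Gamma$-degree-$0$ root must be dressed by the surviving $\beta\gamma$-field, producing the prefactor $P_{n-1}=a$. Finally, the claim that the generic image equals $\bigcap_i \Ker S_i$ follows from an Euler--Poincaré / character comparison against the known character of $\W^\kk(\gl_n,\OO_\sub)$, and extends to all $\kk$ by continuity of $\Psi$.

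The main obstacle is the BRST cohomology computation itself: one must show that the relevant Koszul-type spectral sequence collapses cleanly and that the reduction of the Wakimoto factor yields precisely $\mathcal{A}$ rather than some larger or twisted variant. Equivalently, the surviving degrees of freedom on the free-field side must match those predicted by the Slodowy slice to the subregular orbit of $\gl_n$, and this matching is the technical heart of the construction.
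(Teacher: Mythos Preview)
The paper does not supply a proof of this proposition; it is quoted from \cite{CGN21}. Your outline is essentially the argument carried out there (and in \cite{G20}): apply the BRST reduction to the affine Wakimoto realization, observe that the ghosts kill all $\beta\gamma$-pairs attached to roots of positive $\Gamma$-degree, and the single surviving pair is the one for $\alpha_{n-1}$, yielding the factor $\mathcal{A}$; the simple-root screenings descend to $S_1,\dots,S_{n-1}$ with the dressing $P_{n-1}=a$ appearing because $\alpha_{n-1}$ sits in $\Gamma$-degree $0$. The paper itself sketches exactly this mechanism in the parallel super case (Proposition~\ref{Wakimoto for principal super I} and the preceding lemma), so your plan is aligned with both the cited source and the paper's own methodology.

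One point you gloss over: the proposition asserts a joint kernel of $n$ screenings, not $n-1$. The extra operator $S_n=\int Y(\hwt{x},z)\,\dz$ is not a descended Wakimoto screening but the screening that characterizes $\mathcal{A}\subset\mathcal{A}^\times$ after bosonization; it is needed precisely because the target has been localized. Your sentence about localization being ``a natural step'' is correct, but you should say explicitly that this step introduces $S_n$, so that the intersection $\bigcap_{i=1}^{n}\Ker S_i$ inside $\mathcal{A}^\times\otimes\pi_\h^{k+\dC}$ recovers what would otherwise be $\bigcap_{i=1}^{n-1}\Ker S_i$ inside $\mathcal{A}\otimes\pi_\h^{k+\dC}$. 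Without this, your count of screenings does not match the statement.
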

As in the super-case, we have the decomposition 
\begin{align*}
    \W^\kk(\gl_{n},\OO_\sub)\simeq \W^\kk(\sll_{n},\OO_\sub)\otimes \pi^{\kk}_\Omega.
\end{align*}

Let us recall from \cite{GK} the explicit strong generators of $\W^{\check{\kk}}
(\sll_n,\OO_\sub)$ under this embedding. 
For the convenience in \S \ref{sec: duality} below, we change the notation by adding the symbol ``$\check{\hspace{5mm}}$", for example the level $\kk$ is changed to $\check{\kk}$ and the Heisenberg vertex algebra $\pi^{k+\dC}_\h$ to $\pi^{\check{k}+\check{\dC}}_{\check{\h}}$. 
Let us introduce the following elements:
\begin{align*}
    \check{G}_\tp= \hwt{x+y},\quad \check{H}=-y+\check{\varpi}_{n-1},\quad \check{G}_\tn=\check{R}_1 \check{R}_2 \cdots \check{R}_n \hwt{-(x+y)}
\end{align*}
where we set 
\begin{align*}
    \check{R}_i=\begin{cases}
        \check{\varkappa} \pd -y +(\check{h}_i+\cdots +\check{h}_{n-1}), & (i=1,\dots,n-1),\\
        \check{\varkappa} \pd +\check{\varkappa} (x+y)-x, & (i=n),
    \end{cases}
\end{align*}
with $\check{\varkappa}=(\check{k}+\check{\dC}-1)$ and $\check{\varpi}_{n-1}=\frac{1}{n}\check{h}_1+\frac{2}{n}\check{h}_2+\cdots+\frac{n-1}{n}\check{h}_{n-1}$ is the $(n-1)$-th fundamental weight where $\check{h}_i=\check{u}_i-\check{u}_{i+1}$.
Next, we introduce $\check{W}_1,\cdots, \check{W}_{n-1}$ inductively by the formula 
\begin{align*}
    \check{W}_m=\check{W}_m{}''-\sum_{i=0}^{m-1}(-1)^{m-i}\binom{n-i}{m-i}\check{W}_m (\check{\varkappa} \pd+ \check{H})^{m-i}\wun
\end{align*}
where 
\begin{align*}
    \check{W}_m{}''=\sum_{i=0}^m(-1)^{m+i}\left(\prod_{j=1}^i \frac{j(\check{\varkappa}+1)}{j \check{\varkappa}}\right)e_{m-k}(\check{R}_1, \check{R}_2, \cdots ,\check{R}_n)(\check{\varkappa} \pd+ \check{\varkappa}y)^i\wun
\end{align*}
and $e_{p}(z_1,\cdots,z_n)=\sum_{i_1>\cdots >i_p}z_{i_1}z_{i_2}\cdots z_{i_m}$
is the $p$-th (non-commutative) elementary symmetric polynomial.

\begin{theorem}[\cite{GK}]
For all $\check{\kappa}\in \C$, $\W^{\check{\kk}}(\sll_n,\OO_\sub)$ under the embedding $\check{\Psi}$ is strongly generated by 
$$\check{G}_\tpn, \check{H}, \check{W}_2,\cdots, \check{W}_{n-1}.$$
\end{theorem}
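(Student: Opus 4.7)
My plan is to establish the theorem in two main stages: first, verify that each of the proposed elements lies in the image of $\check{\Psi}$, i.e., in the joint kernel $\bigcap_{i=1}^{n-1} \Ker \check{S}_i$ at generic $\check{\kappa}$; and second, show that the list exhausts a complete set of strong generators by matching it against the known structure of a subregular $\W$-algebra. The elements $\check{G}_\tp = \hwt{x+y}$ and $\check{H} = -y + \check{\varpi}_{n-1}$ are handled directly: $\check{G}_\tp$ is annihilated by each $\check{S}_i$ by a lattice-pairing computation (with the $i = n-1$ case accounting for the extra factor $\check{P}_{n-1} = a$), while $\check{H}$ has vanishing $\lambda$-bracket with every screening exponential because the $\check{\varpi}_{n-1}$ contribution cancels against the $-y$ contribution at the critical index.

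The substantial work lies with $\check{G}_\tn = \check{R}_1 \cdots \check{R}_n \hwt{-(x+y)}$ and the $\check{W}_m$. Here I would proceed by induction on the number of $\check{R}_i$ factors, built on a key commutation lemma of the form $[\check{S}_j, \check{R}_i] = \partial(\cdots) + (\text{correction})$, so that the screening action on a full product of $\check{R}_i$'s against $\hwt{-(x+y)}$ (or against $\wun$ in the case of $\check{W}_m{}''$) produces only total derivatives after the designed correction terms telescope. The inductive definition of $\check{W}_m$ with the coefficients $\binom{n-i}{m-i}$ and the ratios $\prod_{j=1}^i \frac{j(\check{\varkappa}+1)}{j\check{\varkappa}}$ appearing in $\check{W}_m{}''$ is tuned precisely to make this cancellation work. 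This step is the main obstacle: the combinatorial identities required are delicate, and in analogous Miura-type realizations in the non-super case they often require an auxiliary generating-function argument to make the telescoping transparent. A small-rank base case (say $n = 3$) verified by hand would bootstrap the induction.

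For the second stage, the general Kac--Roan--Wakimoto existence theorem guarantees that $\W^{\check{\kk}}(\sll_n, \OO_\sub)$ is strongly generated by exactly $\dim \g^f = n+1$ elements in bijection with a homogeneous basis of the centralizer of the subregular $f$, with conformal weights (namely $1$ for each of $\check{G}_\tpn, \check{H}$ and $m$ for $\check{W}_m$) determined by the good grading. The proposed list has cardinality $n+1$ and matches these weights. Passing to the Li filtration, the symbols $\overline{\check{W}}_m$ reduce to classical elementary symmetric polynomials in the symbols of the $\check{R}_i$, whose algebraic independence in $\gr \W^{\check{\kk}}(\sll_n, \OO_\sub)$ is inherited from that of elementary symmetric polynomials, while the $\check{H}$-charge separates $\check{G}_\tp$ and $\check{G}_\tn$ from the remaining generators. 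Thus the symbols strongly generate the associated graded algebra, which lifts to strong generation of the vertex algebra itself. The extension from generic $\check{\kappa}$ to all $\check{\kappa} \in \C$ is then a standard flatness argument: the ambient free-field realization and the subalgebra generated by the proposed elements both deform continuously in $\check{\kappa}$, and agreement at generic levels propagates to every level.
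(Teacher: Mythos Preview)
This theorem is not proved in the present paper; it is quoted from \cite{GK} (Genra--Kuwabara) and used as input for the subsequent duality argument. There is therefore no in-paper proof to compare your proposal against.

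As a sketch of how the result is actually established, your two-stage plan is accurate in outline and matches the approach of \cite{GK}: one first checks that each listed element lies in the joint kernel of the screenings (hence in the image of $\check{\Psi}$ at generic level), then matches the list against the KRW-prescribed generating type via a symbol argument and extends to all levels by continuity. Your second stage is essentially correct, modulo one slip: the conformal weight of $\check{G}_\tn = \check{R}_1\cdots\check{R}_n\,\hwt{-(x+y)}$ is $n-1$, not $1$ (each $\check{R}_i$ raises the weight by one), though this does not affect the counting once the correct weights are inserted. The real gap is in your first stage. You correctly flag the screening annihilation of $\check{G}_\tn$ and of the $\check{W}_m$ as the crux and gesture at an inductive commutation/telescoping mechanism, but you do not execute it; in \cite{GK} this computation is the substance of the paper, and the particular coefficients $\prod_j j(\check{\varkappa}+1)/(j\check{\varkappa})$ in $\check{W}_m{}''$ and $\binom{n-i}{m-i}$ in the recursion for $\check{W}_m$ are engineered precisely so that the resulting identities among non-commutative elementary symmetric polynomials in the $\check{R}_i$ close. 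Your suggestion that a hand check at $n=3$ would ``bootstrap the induction'' is optimistic: there is no clean passage from rank $n$ to rank $n+1$, and the argument in \cite{GK} is carried out uniformly in $n$.
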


\subsection{Duality}\label{sec: duality}
The $\W$-superalgebras
\begin{align*}
    \subW{\op{\kk}}, \qquad \W^{\kk}(\sll_{n|1}),
\end{align*}
are reconstructed from one to the other, called the Kazama-Suzuki type coset construction \cite{CGN21, FS}.
We recall the reconstruction of $\W^{\kk}(\sll_{n|1})$ in terms of $\subW{\op{\kk}}$ when the levels $(\kk,\check{\kk})$ satisfy the relation
\begin{align}\label{eq: dual levels}
    (k+\dC)(\check{k}+\check{\dC})=1.
\end{align}
By Proposition \ref{Wakimoto for subreg 2}, we have the embedding 
\begin{align*}
    \W^{\check{\kk}}(\sll_{n},\OO_\sub)\otimes V_\Z \overset{\check{\Psi} \otimes 1}{\hookrightarrow} (\pi^{\check{k}+\check{\dC}}_{\check{\h}}\otimes \mathcal{A}^\times) \otimes V_\Z.
\end{align*}
Let $\pi_{H_\Delta}\subset \W^{\check{\kk}}(\sll_{n},\OO_\sub)\otimes V_\Z$ be the Heisenberg vertex subalgebra generated by 
$$H_\Delta= \check{H}\otimes \wun- \wun \otimes x= \check{\varpi}_{n-1}-y_1 -x_2,$$
which satisfies the $\lambda$-bracket 
$$\lm{H_\Delta}{H_\Delta}=\frac{n-1}{n}(\check{k}+\check{\dC})\lambda.$$
Here, the sub-indices $i$ for $x$ and $y$ denote the tensor factor for clarity.
Let $\check{k}\neq -\check{\dC}$ and we consider the Heisenberg coset subalgebra:
\begin{align*}
    \Com(\pi_{H_\Delta}, \W^{\check{\kk}}(\sll_{n},\OO_\sub)\otimes V_\Z) \overset{\check{\Psi} \otimes 1}{\hookrightarrow}  \Com(\pi_{H_\Delta},(\pi^{\check{k}+\check{\dC}}_{\check{\h}}\otimes \mathcal{A}^\times) \otimes V_\Z).
\end{align*}
\begin{lemma}\label{FS duality for FF}
    For levels $(\kk,\check{\kk})$ satisfying \eqref{eq: dual levels}, there is an unique isomorphism 
    $$\sigma \colon  \pi^{k+\dC}_{\h}\otimes V_\Z  \xrightarrow{\simeq}  \Com (\pi_{H_\Delta}, (\pi^{\check{k}+\check{\dC}}_{\check{\h}} \otimes \mathcal{A}^\times) \otimes V_\Z),$$
    which sends 
    \begin{align*}
    &h_i \mapsto \begin{cases}
        -(k+\dC) \check{h}_i, & (i\neq n-1,n),\\
        -(k+\dC)\check{h}_{n-1}+(x_1+y_1), & (i=n-1),\\
        (k+\dC) (y_1+x_2), & (i=n),\\
    \end{cases}\\
    &x \mapsto  x_1+y_1+x_2.
\end{align*}
\end{lemma}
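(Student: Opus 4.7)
The plan is to prove the isomorphism in three stages: verify that $\sigma$ is a well-defined morphism of vertex algebras, check that its image lies in the coset, and upgrade this to an isomorphism by a rank and lattice matching argument.

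For well-definedness, I would check that the specified images satisfy the defining Heisenberg $\lambda$-brackets of $\pi^{k+\dC}_\h\otimes V_\Z$. Using $\lm{\check{u}_i}{\check{u}_j}=(\check{k}+\check{\dC})\delta_{i,j}\lambda$, $\lm{x_1}{x_1}=\lm{x_2}{x_2}=\lambda$, $\lm{y_1}{y_1}=-\lambda$, together with orthogonality of the three tensor factors, the nontrivial identities reduce to
\begin{align*}
\lm{\sigma(h_{n-1})}{\sigma(h_{n-1})}&=2(k+\dC)\lambda, \\
\lm{\sigma(h_{n-1})}{\sigma(h_n)}&=-(k+\dC)\lambda,\\
\lm{\sigma(h_n)}{\sigma(h_n)}&=0, \\
\lm{\sigma(x)}{\sigma(x)}&=\lambda,
\end{align*}
which match the prescribed brackets $(k+\dC)(\alpha_i,\alpha_j)\lambda$ and $(x,x)\lambda=\lambda$. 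Each follows from the level duality $(k+\dC)(\check{k}+\check{\dC})=1$; for instance,
$$\lm{\sigma(h_{n-1})}{\sigma(h_{n-1})}=2(k+\dC)^2(\check{k}+\check{\dC})\lambda+(1-1)\lambda=2(k+\dC)\lambda.$$

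For the coset condition, I would verify $\lm{H_\Delta}{\sigma(-)}=0$ on each generator. The only interesting case is $h_{n-1}$, where $\lm{H_\Delta}{\sigma(h_{n-1})}=-(k+\dC)(\check{\varpi}_{n-1},\check{h}_{n-1})(\check{k}+\check{\dC})\lambda+\lm{-y_1}{x_1+y_1}=-\lambda+\lambda=0$, using $(\check{\varpi}_{n-1},\check{h}_{n-1})=1$ and the duality. The case $h_n$ is a direct cancellation $\lm{H_\Delta}{y_1+x_2}=\lambda-\lambda=0$; the remaining images lie in tensor factors orthogonal to $H_\Delta$.

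Finally, for the isomorphism, the duality $(k+\dC)(\check{k}+\check{\dC})=1$ ensures $H_\Delta$ has nonzero norm $\frac{n-1}{n}(\check{k}+\check{\dC})\lambda$, so the Heisenberg part of the ambient vertex algebra splits as $\pi_{H_\Delta}\otimes\pi_{H_\Delta^\perp}$ compatibly with the lattice extension coming from $\mathcal{A}^\times\otimes V_\Z$. By the preceding $\lambda$-bracket computation, the $\sigma$-images generate a Heisenberg subalgebra of $\pi_{H_\Delta^\perp}$ of the expected rank, hence span the whole Heisenberg part of the coset. The lattice part matches since $\sigma(x)=x_1+y_1+x_2$ has square-norm one, and the sublattice of the charge lattice of $\mathcal{A}^\times\otimes V_\Z$ orthogonal to $H_\Delta$ is exactly $\Z\sigma(x)$, corresponding to $\Z x\subset V_\Z$ on the source. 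Uniqueness of $\sigma$ is automatic from its prescription on a generating set. The main technical obstacle is this last lattice-matching step: the Heisenberg rank count is immediate, but identifying the orthogonal sublattice inside the extension $\mathcal{A}^\times\otimes V_\Z$ requires explicit control over which lattice vectors survive in the commutant of $H_\Delta$.
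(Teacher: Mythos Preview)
The paper states this lemma without proof, so there is no argument of the authors to compare against. Your three-stage plan---verify the $\lambda$-brackets on generators, check orthogonality to $H_\Delta$, then match Heisenberg ranks and lattices---is exactly the natural way to supply the missing details, and the computations you record are correct.

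Two minor points worth sharpening. First, the surjectivity step implicitly uses that the commutant of a nondegenerate Heisenberg $\pi_{H_\Delta}$ inside a Heisenberg-extended lattice vertex algebra is again of the same form, namely $\pi_{H_\Delta^\perp}$ extended by the sublattice $\{\lambda: (H_\Delta,\lambda)=0\}$; this is standard via semisimplicity of weight $\pi_{H_\Delta}$-modules, and you gesture at it, but stating it makes the rank-and-lattice count conclusive. Second, your computation that the orthogonal sublattice is $\Z(x_1+y_1+x_2)$ is correct: the charge lattice of $\mathcal{A}^\times\otimes V_\Z$ is $\Z(x_1+y_1)\oplus\Z x_2$, and pairing $m(x_1+y_1)+px_2$ against $H_\Delta=\check{\varpi}_{n-1}-y_1-x_2$ gives $m-p$, forcing $m=p$. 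With these two points made explicit, the argument is complete.
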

The isomorphism $\sigma$ identifies the screening operators which characterize the Heisenberg coset subalgebra $\Com(\pi_{H_\Delta}, \W^{\check{\kk}}(\sll_{n},\OO_\sub))$ in Proposition \ref{Wakimoto for subreg 2} and those for $\W^{\kk}(\sll_{n|1})$ in Theorem \ref{thm: Wakimoto for sW}.
Thus, we obtain the reconstruction of $\W^{\kk}(\sll_{n|1})$ as the Heisenberg coset subalgebra. 
\begin{theorem}[\cite{CGN21, FS}] \label{FS duality}
    For levels $(\kk,\check{\kk})$ satisfying \eqref{eq: dual levels}, there is an isomorphism of vertex superalgebras 
    \begin{align*}
        \W^{\kk}(\sll_{n|1}) \simeq \Com (\pi_{H_\Delta}, \W^{\check{\kk}}(\sll_n,\OO_\sub) \otimes V_\Z),
    \end{align*}
    which makes the following diagram commutative:
    \begin{center}
\begin{tikzcd}[ column sep = huge]
			\pi^{k+\dC}_{\h}\otimes V_\Z
			\arrow[r, " \overset{\sigma}{\simeq}"]&
			\Com (\pi_{H_\Delta}, (\pi^{\check{k}+\check{\dC}}_{\check{\h}} \otimes \mathcal{A}^\times) \otimes V_\Z)\\
		      \W^{\kk}(\sll_{n|1})
			\arrow[r, " \simeq "] \arrow[u,symbol=\subset, " \Psi\ "]&
			 \Com (\pi_{H_\Delta}, \W^{\check{\kk}}(\sll_n,\OO_\sub) \otimes V_\Z) \arrow[u,symbol=\subset, " \check{\Psi}\ "].
\end{tikzcd}
\end{center}
\end{theorem}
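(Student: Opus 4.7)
The plan is to reduce the isomorphism to the Heisenberg-level identification $\sigma$ of Lemma \ref{FS duality for FF} combined with a matching of the screening operators that characterize the two $\W$-(super)algebras. The central idea is that both sides of the asserted isomorphism admit explicit free-field realizations as joint kernels of screenings: the left-hand side by Theorem \ref{thm: Wakimoto for sW} and the right-hand side by Proposition \ref{Wakimoto for subreg 2} tensored with $V_\Z$. Once the ambient free-field algebras are identified by $\sigma$, the theorem becomes a statement about the preservation of these joint kernels.

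For the setup I would first pass to the $\sll$-versions by stripping off the central $\pi^\kk_\Omega$ factors, and then apply the boson-fermion correspondence \eqref{eq: boson-fermion} to rewrite
$$\W^\kk(\sll_{n|1}) \hookrightarrow \pi^{k+\dC}_\h \otimes V_\Z$$
with $\psi = \hwt{x}$ and $\psi^* = \hwt{-x}$ for the $V_\Z$ generator $x$. On the other side, by $\check{\Psi}\otimes 1$ one has $\W^{\check{\kk}}(\sll_n,\OO_\sub) \otimes V_\Z \hookrightarrow (\pi^{\check{k}+\check{\dC}}_{\check{\h}} \otimes \mathcal{A}^\times) \otimes V_\Z$, and Lemma \ref{FS duality for FF} identifies $\Com(\pi_{H_\Delta}, (\pi^{\check{k}+\check{\dC}}_{\check{\h}} \otimes \mathcal{A}^\times) \otimes V_\Z)$ with $\pi^{k+\dC}_\h \otimes V_\Z$ via $\sigma$. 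The theorem therefore reduces to showing that $\sigma$ exchanges the joint kernel of the $n$ screenings $S_i$ on the left with the intersection of the joint kernel of the $n-1$ screenings $\check{S}_i$ on the right with $\Com(\pi_{H_\Delta},\,\cdot\,)$.

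The core computation is to verify this screening correspondence using the explicit formulas from Lemma \ref{FS duality for FF} and the duality relation $(k+\dC)(\check{k}+\check{\dC})=1$. For $i=1,\dots,n-2$ one directly obtains $\sigma(-\tfrac{1}{k+\dC}\alpha_i)=\check{\alpha}_i$, so $S_i \leftrightarrow \check{S}_i$ on the nose. For the boundary indices $i=n-1,n$, the formulas for $\sigma$ on $h_{n-1}, h_n$ and on $x$ mix the Heisenberg, $\beta\gamma$- and lattice directions; after invoking $\psi=\hwt{x}$ on the left and the lattice identification $a=\hwt{x+y}$ in $\mathcal{A}^\times$ on the right, the pair $\{S_{n-1},S_n\}$ is matched with $\check{S}_{n-1}$ together with the coset condition by $\pi_{H_\Delta}$; in effect, $H_\Delta$ plays the role of the ``missing'' $n$-th dual screening, absorbed into the commutant. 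This yields the isomorphism at generic $\kk$, and the continuous dependence in $\kk$ of both Wakimoto embeddings extends it to all levels satisfying \eqref{eq: dual levels}. Commutativity of the diagram is then immediate from the construction.

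The main obstacle is the bookkeeping at the two boundary indices $n-1, n$: the parity swap between the bosonic $\sll_n$ and the super $\sll_{n|1}$ must be absorbed by the conversion of the even $\beta\gamma$-system $\mathcal{A}^\times$ on the dual side into the odd $bc$-system $\mathcal{A}_\psi$ on the original side, and the additional $V_\Z$ must precisely account for both the rank difference of the Cartan subalgebras and the single Heisenberg $\pi_{H_\Delta}$ that is killed by the commutant. Getting the signs, normalizations, and Fock-module identifications all consistent in the screening comparison, and ensuring no extra relations emerge when specializing levels, are the delicate parts of the argument.
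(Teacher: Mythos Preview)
Your approach is correct and is precisely the one the paper sketches in the paragraph preceding the theorem: use $\sigma$ from Lemma~\ref{FS duality for FF} to identify the ambient free-field algebras, then match the screening operators from Theorem~\ref{thm: Wakimoto for sW} and Proposition~\ref{Wakimoto for subreg 2} under $\sigma$ to conclude the isomorphism of joint kernels (the paper itself defers the details to \cite{CGN21, FS}).

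One small correction to your bookkeeping: on the subregular side there are $n$ screenings, not $n-1$. In addition to $\check{S}_1,\dots,\check{S}_{n-1}$ attached to the simple roots of $\sll_n$, there is the screening $\check{S}_n=\int Y(\hwt{x},z)\dz$ that cuts out $\mathcal{A}\subset \mathcal{A}^\times$ (see Proposition~\ref{Wakimoto for subreg 2}, where the intersection runs over $i=1,\dots,n$). So the matching is $n$ screenings to $n$ screenings, and the commutant by $\pi_{H_\Delta}$ is an additional condition, not a substitute for a missing screening; your description of $H_\Delta$ as ``the missing $n$-th dual screening'' should be adjusted accordingly.
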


\begin{remark}
\textup{When $n=1$, it is known that $V^\kk(\gl_{1|1})$ at non-critical levels are all isomorphic and the analogue of the realization in Theorem \ref{FS duality} is given by the $U(1)$-orbifold construction
$$V^{\kk}(\gl_{1|1})\simeq (\sA\otimes V_\Z)^{U(1)}.$$
This implies that the Heisenberg coset subalgebra $C^\kk(\gl_{1|1})$ is isomorphic to the orbifold $\sA^{U(1)}$ and thus $W_{1+ \infty, c}$-algebra at central charge $c=-1$ (cf. \cite{CL4, KR}).
}
\end{remark}

\subsection{Generators of \tmath{$\W^\kk(\gl_{n|1})$}}
We are ready to describe the generators of the $\W$-superalgebra $\W^\kk(\gl_{n|1})$ under the realization $\Psi$ in Theorem \ref{thm: Wakimoto for sW}.

By \cite{KW04}, $\W^{\kk}(\gl_{n|1})$ at arbitrary levels $\kk$ is strongly generated by certain fields 
$$w_{\mathfrak{q}_1},\cdots,w_{\mathfrak{q}_d}$$
 associated to an (arbitrary) basis $\mathcal{B}=\{\mathfrak{q}_1,\cdots \mathfrak{q}_d\}$ of the centralizer $\gl_{n|1}^{f}\subset \gl_{n|1}$
and their differential monomials form a PBW base.
Moreover, the (quasi-) conformal weights of $w_{\mathfrak{q}_i}$ are given by $1-\Gamma(\mathfrak{q}_i)$ by using the good grading $\Gamma(\mathfrak{q}_i)$ of $\mathfrak{q}_i$.
By using the explicit forms of $f$ and $\Gamma$ in \S \ref{sec: reg Wsalg}, we can take a basis $\mathcal{B}$ as 
\begin{align*} 
    &\mathfrak{q}_1^{(1)}= e_{1,1}+\cdots +e_{n+1,n+1},\\
    &\mathfrak{q}_1^{(2)}=-e_{n+1,n+1},\\
    &\mathfrak{q}_{i}= e_{i,1}+e_{i+1,2}+\cdots+e_{n,n-i+1},\qquad (i=2,\cdots, n)\\
    &\mathfrak{q}_\tp= e_{n,n+1},\quad  \mathfrak{q}_\tn= e_{n+1,1}.
\end{align*}
The conformal weights of the corresponding fields in $\W^\kk(\gl_{n|1})$ are given by
\begin{align}\label{eq: strong generating type of Wsalg}
    1,1,2,\cdots, n,1,n,
\end{align}
respectively. Indeed, the fields corresponding to $\mathfrak{q}_1^{(1)}, \mathfrak{q}_1^{(2)}$ are Heisenberg fields
\begin{align*}
    \Omega=u_1+\cdots +u_{n+1},\quad J=-u_{n+1}+x
\end{align*}

Next, let us introduce the following odd fields
\begin{align*}
    G_\tp= \hwt{x},\quad G_\tn=R_1^\tn R_2^\tn \cdots R_n^\tn \hwt{-x}
\end{align*}
where
\begin{align*}
    R_i=\varkappa \pd -x +(h_i+\cdots +h_{n}),\quad (i=1,\dots,n)
\end{align*}
with $\varkappa=(k+\dC-1)$.
\begin{theorem}\label{strong generators for reg super}
Under the embedding $\Psi$, $\W^{\kk}(\gl_{n|1})$ is strongly generated by 
$$ \Omega, J, W_2,\cdots, W_{n},G_\tpn$$ 
for $k \neq -\dC+1$.
\end{theorem}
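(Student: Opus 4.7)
The plan is to combine the Kac--Wakimoto free generation theorem \cite{KW04} with the Heisenberg coset description from Theorem \ref{prop: coset for Wsalg} and, for the odd generators, the Feigin--Semikhatov duality of Theorem \ref{FS duality}. By \cite{KW04}, $\W^\kk(\gl_{n|1})$ is freely strongly generated by $n+3$ fields whose conformal weights are $1, 1, 2, 3, \ldots, n, 1, n$, indexed by the basis $\mathcal{B}$ of $\gl_{n|1}^f$ chosen in \S\ref{sec: reg Wsalg}; in particular the $C_2$-algebra $R_{\W^\kk(\gl_{n|1})}$ is a polynomial superalgebra on $n+3$ generators of the same weights and parities. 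Since the list $\Omega, J, W_2, \ldots, W_n, G_+, G_-$ contains exactly $n+3$ elements whose conformal weights and parities match, the statement reduces to two tasks: (i) verifying that each of them lies in the image of $\Psi$, and (ii) showing that their $C_2$-images are algebraically independent in $R_{\W^\kk(\gl_{n|1})}$.

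For (i), the Heisenberg fields $\Omega$ and $J$ arise from the decomposition \eqref{eq: decomp of gl} and from \S\ref{sec: Heisenberg coset}, while $W_p$ for $p\geq2$ lies in $C^\kk(\gl_{n|1})\subset \W^\kk(\gl_{n|1})$ by Theorem \ref{prop: coset for Wsalg}. For the short odd generator $G_+=\hwt{x}$ one checks $S_iG_+=0$ directly: for $i<n$ this is immediate from $(\alpha_i,x)=0$, and for $S_n=\int Y(\psi\,\hwt{-\alpha_n/(k+\dC)},z)\,\dz$ the inner product $(x-\alpha_n/(k+\dC),x)=1$ yields a regular OPE with $\hwt{x}$. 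For the long odd generator $G_-=R_1\cdots R_n\hwt{-x}$, the vanishing $S_iG_-=0$ can be established by an inductive computation using the commutation of the screenings with the $R_j$'s; alternatively it follows via Theorem \ref{FS duality}, which transports the strong generator $\check G_-$ of $\W^{\check\kk}(\sll_n,\OO_\sub)$ from \cite{GK} to $G_-$ in $\W^\kk(\sll_{n|1})$.

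For (ii), pass to the embedding $\overline\Psi\colon R_{\W^\kk(\gl_{n|1})}\hookrightarrow R_{\mathcal{A}_\psi\otimes\pi^{k+\dC}_\h}=\bigwedge{}_{\overline\psi,\overline\psi^*}\otimes S(\h)$. Expanding the $\partial^{n+\varepsilon-1}$-coefficient of $\mathcal{L}_k$ gives the identity $W_1=\varepsilon\Omega+\varepsilon(1+\varepsilon)J$, so $\overline\Omega,\overline J,\overline W_2,\ldots,\overline W_n$ span the same even subalgebra as the algebraically independent family $\overline W_1,\overline W_2,\ldots,\overline W_n$ from Proposition \ref{prop: algebraic independence in cosets} and are therefore themselves algebraically independent. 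On the odd side, $\overline{G}_+=\overline\psi$, and the leading term of $\overline{G}_-$ is $\overline\psi{}^*$ multiplied by a non-vanishing polynomial in $\h$, so the two odd images are linearly independent and, by the $\Z/2\Z$-grading, algebraically independent from the even part. Combining these gives $n+3$ algebraically independent elements of the correct weights and parities inside a polynomial superalgebra freely generated by $n+3$ such elements; they must therefore form a set of free strong generators.

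The main technical obstacle is the screening vanishing $S_iG_-=0$ for $i<n$: because $G_-$ is a nested product of non-commuting first-order operators $R_j$, a direct verification requires a delicate induction tracking the boundary terms produced by the commutators $[S_i,R_j]$. Using the Feigin--Semikhatov duality of Theorem \ref{FS duality} circumvents this by pulling back the corresponding and already established statement for $\check G_-$ in the subregular $\W$-algebra $\W^{\check\kk}(\sll_n,\OO_\sub)$.
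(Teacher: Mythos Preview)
Your overall strategy matches the paper's: use \cite{KW04} for the free-generation template, Theorem \ref{prop: coset for Wsalg} for the $W_i$, a direct screening check for $G_+$, Feigin--Semikhatov duality (Theorem \ref{FS duality}) together with the Genra--Kuwabara generators \cite{GK} for $G_-$, and Proposition \ref{prop: algebraic independence in cosets} for algebraic independence in the $C_2$-algebra. The paper carries out the $G_-$ step by verifying the explicit identity $G_-=-(-1)^n(\varkappa+1)^n\,\check G_-\otimes\hwt{-x}$ under the isomorphism $\sigma$ of Lemma \ref{FS duality for FF}, which is the concrete form of what you sketch.

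There is, however, a slip in your step (ii). You claim that the $n{+}1$ elements $\overline\Omega,\overline J,\overline W_2,\ldots,\overline W_n$ ``span the same even subalgebra'' as the $n$ elements $\overline W_1,\ldots,\overline W_n$ and hence are themselves algebraically independent. This is internally contradictory: $n{+}1$ algebraically independent elements cannot generate the same subalgebra as $n$ such elements. The relation $W_1=\varepsilon\Omega+\varepsilon(1+\varepsilon)J$ only lets you trade $\{\Omega,J\}$ for $\{J,W_1\}$, so what must be shown is that $\overline J,\overline W_1,\ldots,\overline W_n$ are algebraically independent. Proposition \ref{prop: algebraic independence in cosets} handles only the $\overline W_i$; you still owe an argument that $\overline J$ is independent of them. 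This is easily repaired --- for instance, project to $S(\h)$ by killing $\overline\psi,\overline\psi^*$ so that $\overline J\mapsto -\overline u_{n+1}$, and note that specializing $\overline u_{n+1}=0$ sends $\overline W_1,\ldots,\overline W_n$ to the elementary symmetric polynomials in $\overline u_1,\ldots,\overline u_n$ (this is exactly the computation behind Proposition \ref{prop: algebraic independence in cosets}) --- but as written the deduction does not go through. The paper makes the correct substitution to $J,W_1,\ldots,W_n$ and then appeals to Proposition \ref{prop: algebraic independence in cosets}, leaving the extra step for $J$ implicit.
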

\proof
We first show that $ \Omega, J, W_2,\cdots, W_{n},G_\tpn$ lie in $\W^{\kk}(\gl_{n|1})$.
Since $W_i$'s lie in $\W^{\kk}(\gl_{n|1})$ by Proposition \ref{prop: coset for Wsalg}, it remains to check $G_\tpn$.
It is straightforward to show that $G_\tp$ lies in $\W^{\kk}(\gl_{n|1})$ by checking $S_i G_\tp=0$, thanks to Theorem \ref{thm: Wakimoto for sW}.
Next, we show that $G_\tn$ lies in in $\W^{\kk}(\gl_{n|1})$.
By Theorem \ref{FS duality} again, it suffices to show 
\begin{align}
  \nonumber  G_\tn&=-(-1)^n(\varkappa+1)^{n} \check{G}_\tn \otimes \hwt{-x}\\
  \label{negative generator}   &=-(-1)^n(\varkappa+1)^{n} \check{R}_1 \check{R}_2 \cdots \check{R}_n \hwt{-(x_1+y_1)} \otimes \hwt{-x_2}
\end{align}
where in the second line, we put the indices $x_i$ to clarify the tensor factor.
By using the isomorphism $\sigma$ in Lemma \ref{FS duality for FF}, we have 
\begin{align*}
    \check{R}_n \hwt{-(x_1+y_1)}\otimes \hwt{-x_2}
    &=-x_1 \hwt{-(x_1+y_1+x_2)}
    =\left(\pd+ \frac{1}{\varkappa+1}h_n\right) \hwt{-x}\\
    &=\frac{1}{\varkappa+1}( (\varkappa+1)\pd +h_n) \hwt{-x}
    =\frac{1}{\varkappa+1}(\varkappa\pd +h_n-x) \hwt{-x}\\
    &=\frac{1}{\varkappa+1}R_n \hwt{-x}.
\end{align*}
For $i=1,\dots,n-1$, 
\begin{align*}
   \check{R}_i&=\check{\varkappa} \pd -y_1 +(\check{h}_i+\cdots +\check{h}_{n-1})\\
    &=\check{\varkappa} \pd -y_1-\frac{1}{\varkappa+1}(h_i+\cdots+ h_{n-1})+\frac{1}{\varkappa+1}(x_1+y_1)\\
    &=-\frac{1}{\varkappa+1}\left(\varkappa \pd +(\varkappa+1)y_1+(h_i+\cdots+ h_{n-1})-(x_1+y_1) \right)\\
    &=-\frac{1}{\varkappa+1}\left(\varkappa \pd +(h_i+\cdots+ h_{n-1} + h_n-x)-\varkappa x_2 \right).
\end{align*}
In the expression \eqref{negative generator}, the derivation $\pd$ appearing in $\check{R}_i$ acts only on the first factor, i.e., acts as $\pd \otimes 1$. On the other hand, the total derivative $\pd(=\pd \otimes 1+1 \otimes \pd)$ acts on $\check{R}_i\cdots \check{R}_n \hwt{-(x_1+y_1)}\otimes \hwt{-x_2}$
by $\pd=\pd \otimes 1-1\otimes x_2$. Therefore, we have
\begin{align*}
   \check{R}_i
    &=-\frac{1}{\varkappa+1}\left(\varkappa \pd\otimes 1 +(h_i+\cdots+ h_{n-1} + h_n-x)-\varkappa x_2 \right)\\
    &=-\frac{1}{\varkappa+1}\left(\varkappa (\pd-x_2) +(h_i+\cdots+ h_{n-1} + h_n-x)-\varkappa x_2 \right)\\
    &=-\frac{1}{\varkappa+1}\left(\varkappa \pd +(h_i+\cdots+ h_{n-1} + h_n-x) \right)\\
    &=-\frac{1}{\varkappa+1}R_i
\end{align*}
holds as operators acting on $\check{R}_{i+1}\cdots \check{R}_n \hwt{-(x_1+y_1)}\otimes \hwt{-x_2}$. Hence, by induction, we obtain 
\begin{align*}
    \check{R}_1\cdots \check{R}_n \hwt{-(x_1+y_1)}\otimes \hwt{-x_2}=- \frac{(-1)^n}{(\varkappa+1)^{n}} R_1\cdots R_n \hwt{-x},
\end{align*}
and thus \eqref{negative generator} holds.

Finally, we show that $\Omega, J, W_2,\cdots, W_{n},G_\tpn$ strongly generate $\W^\kk(\gl_{n|1})$. 
Since $G_\tpn$ are odd fields and $J_{(0)}G_\tpn=\pm G_\tpn$, they are algebraically independent. As they have conformal weight $1,n$, they are strong generators corresponding to $\mathfrak{q}_\tpn$. On the Since $W_1=\varkappa(\Omega+(\varepsilon+1)J)$, we may consider $J, W_1, W_2,\cdots, W_{n}$ instead of $\Omega, J, W_2,\cdots, W_{n}$. 
Then Proposition \ref{prop: algebraic independence in cosets} implies that their differential monomials form the same PBW basis as those of the generators corresponding to $\mathfrak{q}_1^{(1)}, \mathfrak{q}_1^{(2)},\mathfrak{q}_2,\cdots, \mathfrak{q}_n$.
This completes the proof.
\endproof

\subsection{Critical level}
Let $\mathbb{A}^{N}$ denote the affine scheme whose regular functions are just a polynomial ring $\C[z_1,\cdots,z_N]$ and $J_\infty\mathbb{A}^{N}$ its jet scheme. The algebra of the regular functions $\C[J_\infty \mathbb{A}^{N}]$ is a commutative vertex algebra (i.e., a differential algebra) and again a polynomial ring 
$$\C[J_\infty \mathbb{A}^{N}]=\C[\partial^pz_1,\cdots,\partial^pz_N\mid p\geq0].$$

\begin{theorem}\label{critical level structure of reg superW}
For the critical level $\kk=\kk_c$, there are isomorphisms 
\begin{align*}
    &\W^{\kk_c}(\gl_{n|1})\simeq \C[J_\infty \mathbb{A}^{n-1}]\otimes V^{\kk_c}(\gl_{1|1}),\\
    &\W^{\kk_c}(\sll_{n|1})\simeq \C[J_\infty \mathbb{A}^{n-2}]\otimes V^{\kk_c}(\gl_{1|1}).    
\end{align*}
\end{theorem}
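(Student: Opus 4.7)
The plan is to first reduce the $\gl_{n|1}$-statement to the $\sll_{n|1}$-statement and then establish the latter via a tensor decomposition of $\W^{\kk_c}(\sll_{n|1})$.

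For the reduction, the splitting $\gl_{n|1} = \sll_{n|1} \oplus \C\Omega$ together with \eqref{eq: decomp of gl} yields $V^\kk(\gl_{n|1}) \simeq V^\kk(\sll_{n|1}) \otimes \pi^\kk_\Omega$. Since $\Omega$ is central in $\gl_{n|1}$ and has weight zero in the good $\Z$-grading of Section \ref{sec: reg Wsalg}, this splitting is compatible with the BRST differential and descends to $\W^\kk(\gl_{n|1}) \simeq \W^\kk(\sll_{n|1}) \otimes \pi^\kk_\Omega$. At the critical level, $\lm{\Omega}{\Omega} = (n-1)(k+\dC)\lambda = 0$, so $\pi^{\kk_c}_\Omega$ is freely generated as a differential algebra by $\Omega$, that is, $\pi^{\kk_c}_\Omega \simeq \C[J_\infty \mathbb{A}^1]$. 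The $\gl_{n|1}$-claim thus reduces to
\[
\W^{\kk_c}(\sll_{n|1}) \simeq \C[J_\infty \mathbb{A}^{n-2}] \otimes V^{\kk_c}(\gl_{1|1}).
\]

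For the decomposition of $\W^{\kk_c}(\sll_{n|1})$, I would work under the Wakimoto embedding $\Psi\colon \W^{\kk_c}(\sll_{n|1}) \hookrightarrow \mathcal{A}_\psi \otimes \pi^0_{\h}$ from Theorem \ref{thm: Wakimoto for sW} and build a subalgebra $V \subset \W^{\kk_c}(\sll_{n|1})$ isomorphic to $V^{\kk_c}(\gl_{1|1})$, targeting the standard $\gl_{1|1}$-Wakimoto \eqref{eq: Wakimoto for gl11} applied to the Cartan pair $(u_n, u_{n+1})$. The two easy generators are $E_{1,2} = G_\tp$ with $\Psi$-image $\psi$, and $E_{2,2} = -J$ with image $u_{n+1} - x$. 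The remaining $E_{1,1}, E_{2,1}$ must be extracted from $J, G_\tn$ and the central fields $W_2, \dots, W_n$: the Heisenberg field $u_n + u_{n+1}$ realizing $E_{1,1} + E_{2,2}$ is the leading symbol of an appropriate central combination (identifiable via Proposition \ref{prop: algebraic independence in cosets}), and $E_{2,1}$ is obtained from $G_\tn$ — which carries quasi-conformal weight $n$ — by iteratively dividing out central factors corresponding to $u_i + u_{n+1}$ for $i = 1, \dots, n-1$. Concretely, one computes $\psi_{(0)} G_\tn$ using the factored form $G_\tn = R_1 \cdots R_n \hwt{-x}$ with $R_i = -\pd - x + u_i + u_{n+1}$ (at $\varkappa = -1$) and the identities $\psi_{(0)} \psi^* = 1$, $\psi_{(0)} x = -\psi$, $\psi_{(0)} u_i = 0$, allowing one to peel off the $R_i$ factors one by one via the recursion $A_k = R_k A_{k+1} + \psi F_{k+1}$ with $A_{n+1} = 1$.

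For the commutative complement, the center $\cent(\W^{\kk_c}(\sll_{n|1})) = C^{\kk_c}(\sll_{n|1})$ (Theorem \ref{thm: Wakimoto for sCoset}(2)) is generated by the $n-1$ fields $W_2, \dots, W_n$ (Proposition \ref{prop: coset for Wsalg}). After absorbing one linear combination into $V$ to supply $E_{1,1} + E_{2,2}$, the remaining $n-2$ generators are algebraically independent by Proposition \ref{prop: algebraic independence in cosets} and freely generate a polynomial differential subalgebra $C \simeq \C[J_\infty \mathbb{A}^{n-2}]$. Since $C$ is central, the multiplication $C \otimes V \to \W^{\kk_c}(\sll_{n|1})$ is a well-defined vertex superalgebra homomorphism; surjectivity follows by matching the $n+2$ strong generators from Theorem \ref{strong generators for reg super}, and injectivity follows by comparing PBW bases on both sides. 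The main obstacle is the explicit construction of $V \simeq V^{\kk_c}(\gl_{1|1})$ inside $\W^{\kk_c}(\sll_{n|1})$: in particular, producing the ``low weight'' field $E_{2,1}$ out of $G_\tn$ by iteratively removing the central symbol factors $u_i + u_{n+1}$ is delicate and relies crucially on the critical-level simplification $\varkappa = -1$, under which the factored Wakimoto expression for $G_\tn$ interacts cleanly with the OPE of the central generators $W_i$.
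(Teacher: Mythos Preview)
Your plan has a genuine structural gap in the construction of the $V^{\kk_c}(\gl_{1|1})$ subalgebra. You aim to realise the embedding so that its Wakimoto image matches the \emph{standard} $\gl_{1|1}$ realisation \eqref{eq: Wakimoto for gl11} on the Cartan pair $(u_n,u_{n+1})$: in particular you want an odd field $E_{2,1}$ of $J$-charge $-1$ and conformal weight~$1$, and a central element whose leading symbol is $u_n+u_{n+1}$. Neither of these exists in $\W^{\kk_c}(\sll_{n|1})$. From the generating type \eqref{eq: strong generating type of Wsalg}, the only odd generators are $G_\tp$ (weight~$1$, charge~$+1$) and $G_\tn$ (weight~$n$, charge~$-1$); every odd element of charge $-1$ is a differential polynomial in the central fields times $G_\tn$ and hence has weight $\geq n$. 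Likewise, elements of $C^{\kk_c}$ have $\mathfrak{S}_n$-symmetric symbols in $u_1,\dots,u_n$ (cf.\ Proposition~\ref{prop: algebraic independence in cosets}), so no central element can have leading symbol $u_n+u_{n+1}$. The ``iterative dividing out'' / ``peeling off $R_i$'' procedure you sketch is not a well-defined operation inside the $\W$-algebra: the factored form $G_\tn=R_1\cdots R_n\hwt{-x}$ lives in the ambient $\mathcal{A}_\psi\otimes\pi^0_\h$, and the individual truncations $R_k\cdots R_n\hwt{-x}$ are \emph{not} in the image of $\Psi$ for $k>1$.

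The paper's argument avoids this by accepting a \emph{non-graded} embedding. One computes the bracket $\lm{G_\tp}{G_\tn}$ directly at $\kk_c$ using the factored form (where $\varkappa=-1$ makes $G_\tn=(-1)^n\big((\partial+H_1)\cdots(\partial+H_n)\wun\big)\otimes\hwt{-x}$) and finds $\lm{G_\tp}{G_\tn}=\mathscr{W}_n:=(-1)^n(\partial+H_1)\cdots(\partial+H_n)\wun$, a single element of weight~$n$. This yields an embedding $V^{\kk_c}(\gl_{1|1})\hookrightarrow\W^{\kk_c}(\gl_{n|1})$ via $e_{1,1}\mapsto J$, $e_{2,2}\mapsto -J+\mathscr{W}_n$, $e_{1,2}\mapsto G_\tp$, $e_{2,1}\mapsto G_\tn$. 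The commutative complement is then generated by $W_1,\dots,W_{n-1}$ (not $W_2,\dots,W_n$): one checks by a symbol comparison that $\mathscr{W}_n$ differs from $\varepsilon^{-n}W_n$ by a differential polynomial in $J,W_1,\dots,W_{n-1}$, so that $J,W_1,\dots,W_{n-1},\mathscr{W}_n,G_\tpn$ is an alternative set of strong generators, and the two embeddings assemble into the claimed tensor decomposition. Your reduction direction (doing $\sll$ first) is fine, but the actual content---which field plays the role of $e_{2,1}$ and which central elements form the polynomial factor---needs to be reversed.
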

\proof
It suffices to show the assertion for $\W^{\kk_c}(\gl_{n|1})$. 
By Proposition \ref{prop: coset for Wsalg}, the gererators $W_1,\cdots,W_n$ lie in the center $\cent(\W^{\kk_c}(\gl_{n|1})$ and thus the first $(n-1)$ elements generate a central subalgebra isomorphic to $\C[J_\infty \mathbb{A}^{n-1}]$, i.e., there is an embedding 
\begin{align}\label{eq: first emb}
    \C[J_\infty \mathbb{A}^{n-1}] \hookrightarrow \W^{\kk_c}(\gl_{n|1}),\quad z_i\mapsto W_i\ (i=1,\cdots,n-1).
\end{align}
We will show that $G_\tpn$, $J$ and $W_n$ after necessary modification generate a subalgebra isomorphic to $V^{\kk_c}(\gl_{1|1})$ and thus 
$$\W^{\kk_c}(\gl_{n|1})\simeq \C[J_\infty \mathbb{A}^{n-1}]\otimes V^{\kk_c}(\gl_{1|1}).$$
It is straightforward to show 
\begin{align*}
\lm{J}{J}=\lambda, \quad \lm{J}{G_\tpn}=\pm G_\tpn.
\end{align*}
and 
$$\lm{G_\tp}{G_\tp}=0,\quad \lm{G_\tn}{G_\tn}=0,$$
thanks to the weight reason. 
Next, we compute the $\lambda$-bracket $\lm{G_\tp}{G_\tn}$.
Since we have
\begin{align*}
    R_i=-(\pd+x+H_i),\quad H_i=-(h_i+\cdots +h_n).
\end{align*}
Hence, one has $R_n \hwt{-x}=-H_i\hwt{-x}$. By induction, one can show that 
\begin{align*}
    R_i \cdots R_n\hwt{-x}=(-1)^{n+1-i} \big((\pd+H_i)\cdots (\pd+H_n) \wun\big) \otimes \hwt{-x} \in \pi^{0}_{\h} \otimes V_\Z
\end{align*}
since $H_1,\dots H_n$ are in the commutative vertex algebra $\pi^{0}_{\h}$.
Hence, 
$$G_\tn=R_1 \cdots R_n\hwt{-x}=(-1)^{n} \big((\pd+H_1)\cdots (\pd+H_n) \wun \big)\otimes \hwt{-x}$$
and thus 
\begin{align*}
    \lm{G_\tp}{G_\tn}&=(-1)^n \big((\pd+H_1)\cdots (\pd+H_n) \wun \big)\lm{\hwt{x}}{\hwt{-x}}\\
    &=(-1)^n (\pd+H_1)\cdots (\pd+H_n) \wun.
\end{align*}
Let us set $\mathscr{W}_n:=(-1)^n (\pd+H_1)\cdots (\pd+H_n) \wun$. Then, it is clear that we have an embedding of vertex superalgebras 
\begin{align}\label{eq: subalg 2}
\begin{array}{ccc}
     V^{\kk_c}(\gl_{1|1})&\hookrightarrow &\W^{\kk_c}(\gl_{n|1})\\
     e_{1,1},\ e_{2,2} & \mapsto & J,\ -J+\mathscr{W}_n,\\
     e_{1,2},\ e_{2,1} & \mapsto &G_\tp,\ G_\tn.
    \end{array}
\end{align}
Next, we consider the image $\pi(\overline{\mathscr{W}}_n)$ as in \S \ref{sec: general coset}. 
Since $\overline{H}_i=-(e_{i,i}+e_{n+1,n+1})$, we have $\pi(\overline{H}_i)=-e_{i,i}$
$$\pi(\overline{\mathscr{W}}_n)=e_{1,1}\cdots e_{n,n}=\varepsilon^{-n}\pi(\overline{W}_n).$$
Hence, there exists a differential polynomial $F(z_1,\cdots,z_{n})$ such that 
$$\mathscr{W}_n=\varepsilon^{-n} W_n+F(J,W_1,\cdots,W_{n-1})$$
by Theorem \ref{strong generators for reg super}.
Thus, we may take the strong generators at the critical level $\kk=\kk_c$ as $J, W_1, \cdots, W_{n-1}, \mathscr{W}_n, G_\tpn$.
Thus, the embeddings \eqref{eq: first emb} and \eqref{eq: subalg 2} combine to induce the isomorphism 
\begin{align*}
    \W^{\kk_c}(\sll_{n|1}) \simeq \C[J_\infty \mathbb{A}^{n-2}] \otimes V^{\kk_c}(\gl_{1|1})
\end{align*}
as desired. This completes the proof.
\endproof
\begin{corollary}\label{center of reg sW alg}
    There are isomorphisms of vertex algebras 
    \begin{align*}
    &\cent(\W^{\kk_c}(\gl_{n|1})) \simeq \C[J_\infty \mathbb{A}^{n-1}] \otimes \cent(V^{\kk_c}(\gl_{1|1})),\\
    &\cent(\W^{\kk_c}(\sll_{n|1})) \simeq \C[J_\infty \mathbb{A}^{n-2}] \otimes \cent(V^{\kk_c}(\gl_{1|1})).
    \end{align*}
\end{corollary}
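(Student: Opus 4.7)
The statement is an immediate corollary of Theorem \ref{critical level structure of reg superW}, so my plan has just three short steps.

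First, I would invoke Theorem \ref{critical level structure of reg superW} to replace each of $\W^{\kk_c}(\gl_{n|1})$ and $\W^{\kk_c}(\sll_{n|1})$ by the tensor product of a commutative vertex algebra $\C[J_\infty \mathbb{A}^{N}]$ (with $N=n-1$ or $N=n-2$, respectively) and the affine vertex superalgebra $V^{\kk_c}(\gl_{1|1})$. After this reduction, the task becomes computing the center of such a tensor product.

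Second, I would establish the general fact that for any two vertex superalgebras $A,B$ one has
\begin{align*}
\cent(A\otimes B)\;\simeq\; \cent(A)\otimes \cent(B).
\end{align*}
The inclusion $\supseteq$ is obvious. For $\subseteq$, write an element $x\in A\otimes B$ as $\sum_i a_i\otimes b_i$ with the $b_i$ linearly independent; imposing $[Y(x,z),Y(a\otimes \wun,w)]=0$ for all $a\in A$ forces each $a_i$ to lie in $\cent(A)$, and the symmetric argument on the $B$-side forces each $b_i\in \cent(B)$.

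Third, since $\C[J_\infty \mathbb{A}^{N}]$ is a commutative vertex algebra, it coincides with its own center. Combining this with the previous step yields
\begin{align*}
\cent\!\left(\C[J_\infty \mathbb{A}^{N}] \otimes V^{\kk_c}(\gl_{1|1})\right)
\;\simeq\; \C[J_\infty \mathbb{A}^{N}] \otimes \cent(V^{\kk_c}(\gl_{1|1})),
\end{align*}
which together with Theorem \ref{critical level structure of reg superW} gives exactly the two claimed isomorphisms.

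There is no serious obstacle in this argument. The only point meriting a remark is that the decompositions supplied by Theorem \ref{critical level structure of reg superW} really are intrinsic tensor products of vertex superalgebras (so the two factors mutually commute and are algebraically independent in the strong sense); this is precisely what was shown in the proof of that theorem, where the first factor is generated by the \emph{central} elements $W_1,\ldots,W_{n-1}$ and the second is generated by $J,\mathscr{W}_n,G_\tpn$, with algebraic independence secured by the Kac--Wakimoto PBW theorem for $\W$-superalgebras together with Proposition \ref{prop: algebraic independence in cosets}.
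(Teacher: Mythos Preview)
Your proposal is correct and matches the paper's intent: the corollary is stated without proof immediately after Theorem \ref{critical level structure of reg superW}, precisely because taking the center of a tensor product factorizes as $\cent(A\otimes B)\simeq \cent(A)\otimes\cent(B)$ and the commutative factor $\C[J_\infty\mathbb{A}^N]$ is its own center. Your brief justification of the tensor factorization of the center is exactly the standard argument one would supply if pressed.
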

We note that the center $\cent(V^{\kk_c}(\gl_{1|1}))$ is already described in the literature \cite{A, MM}.

\section{Wakimoto realization}\label{sec: Wakimoto realization}
\subsection{Affine case}
In this section, we recall the Wakimoto realization for the affine vertex superalgebra $V^{\kk}(\g)$ for $\g=\gl_{n|1}$ or $\sll_{n|1}$, which is a super-analogue of \cite{F}. The proof is a modification of the arguments in \cite{F} to this setting, and thus we will only state the results. 

Let $G$ denote be quasi-reductive algebraic groups $GL_{n|1}$ (resp.\ $SL_{n|1}$) so that the associated Lie superalgebras $\g=\gl_{n|1}$, (resp.\ $\sll_{n|1}$).
Corresponding to the standard triangular decomposition $\g=\nil_+ \oplus \h \oplus \nil_-$, we denote by $N_+, H, N_-$ the closed subgroups of $G$.
Using the exponential map $\exp:\n_+\overset{\sim}{\longrightarrow}N_+$, one may identify the coordinate rings $\C[N_+]\simeq \C[z_\alpha\mid \alpha\in \Delta_+]$ where $z_\alpha$ is the coordinate of the root vector $e_\alpha\in\n_+$ ($\alpha \in \Delta_+$).
The inverse of the left multiplication of $N_+$ on itself induces a homomorphism $\rho^R:\n_+\to\D(N_+)$ where $\D(N_+)$ is the ring of differential operators on $N_+$. 
Then, one has 
\begin{equation}\label{eq:rho}
  \rho^R:\n_+\to\D(N_+),\quad e_{\alpha}\mapsto e_{\alpha}^R:=\sum_{\alpha\in \Delta_+}P_i^{\alpha}(z)\partial_{z_\alpha} 
\end{equation}
for some polynomials $P_i^{\alpha}(z)$ in $\C[N_+]$.
On the other hand, consider the following right multiplication 
$$N_- \backslash G \times G \rightarrow G.$$
By restricting to the open subset $U=H N_+\subset N_- \backslash G$, we obtain an embedding 
\begin{align}\label{finite Wakimoto}
   \rho\colon U(\g) \hookrightarrow \D(N_+) \otimes U(\h).
\end{align}

Now, let $\sA$ be the chiral differential operators over the affine superspace $N_+$. Explicitly, $\sA$ is the vertex superalgebra, strongly generated by $a_{\alpha}, a_{\alpha}^*$ ($\alpha \in \Delta_+$) which have the same parity as $e_{\alpha}$ in $\g$ and satisfy the $\lambda$-bracket
\begin{align*}
    \lm{a_{\alpha}}{a_{\beta}^*}=\delta_{\alpha,\beta},\quad \lm{a_{\alpha}}{a_{\beta}}=\lm{a_{\alpha}^*}{a_{\beta}^*}=0.
\end{align*}
Let $\heis{\kk-\kk_c}$ be the Heisenberg vertex algebras associated with the Cartan subalgebra $\h\subset \g$ at the level $\kk-\kk_c$. It is, by definition, generated by the subspace $\h$ which satisfies the $\lambda$-bracket
\begin{align*}
    \lm{\w{h}}{\w{h}'}=(\kappa-\kappa_c)(h,h')\lambda,\qquad (h,h' \in \h).
\end{align*}
For $\nu\in \h^*$, we denote by $\pi^{\kk-\kk_c}_{\h,\nu}$ the Fock module over $\pi^{\kk-\kk_c}$ of highest weight $\nu$. 

As a vertex-algebraic upgrade of the morphism $\rho$ in \eqref{finite Wakimoto}, we have the following realization of the affine vertex superalgebra $V^\kk(\g)$.
\begin{proposition}\label{Wakimoto for affine I}
There exists an embedding of vertex superalgebras 
\begin{align*}
\w{\rho} \colon V^{\kk}(\g)\hookrightarrow \sA\otimes \heis{\kk-\kk_c}.
\end{align*}
Moreover, the image is characterized by the kernel of the screening operators at generic levels:
\begin{align*}
V^{\kk}(\g) \simeq \bigcap \Ker S_i \subset \sA \otimes \heis{\kk-\kk_c}
\end{align*}
$$S_i=\int Y(e_{i}^R\mathrm{e}^{-\frac{1}{k+n}\alpha_i},z)\mathrm{d}z\colon \sA\otimes \heis{\kk-\kk_c}\rightarrow \sA\otimes \pi^{\kk-\kk_c}_{\h,-\alpha_i},\quad (i=1,\cdots,n).$$
\end{proposition}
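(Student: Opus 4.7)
The plan is to lift the finite-dimensional embedding $\rho \colon U(\g) \hookrightarrow \D(N_+) \otimes U(\h)$ recalled in \eqref{finite Wakimoto} to the vertex superalgebra level by replacing $\D(N_+)$ with the chiral differential operators $\sA$ and $U(\h)$ with the Heisenberg vertex algebra $\heis{\kk-\kk_c}$. The critical-level shift of the Heisenberg factor accounts for the quantum anomaly that arises when passing from the classical ring $\D(N_+)$ to its chiralization. The super-signs cause no structural difficulty at the level of formulas, because the generators $a_\alpha,a_\alpha^*$ of $\sA$ have the same parity as the root vectors $e_\alpha\in\g$, mirroring the split of $\nil_+$ into even root vectors $e_{i,j}$ ($1\leq i<j\leq n$) and odd root vectors $e_{i,n+1}$ ($1\leq i\leq n$).

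Concretely, I would define $\widetilde{\rho}$ on generators by: $e_\alpha \mapsto e_\alpha^R$ for $\alpha\in\Delta_+$ via the chiralization of \eqref{eq:rho}; each $h\in\h$ to $\widetilde{h}$ plus a normal-ordered bilinear correction of the form $\sum c_{h,\alpha}{:}a_\alpha^*a_\alpha{:}$; and each negative simple-root generator $f_i$ to an expression involving $a_i^*$, the Cartan fields, and derivatives $\partial a_i^*$ with coefficients depending linearly on $k$. These coefficients are fixed by requiring that the $\lambda$-brackets match the defining relations of $V^\kk(\g)$. The main computation is that the anomaly from double-contractions in the $\beta\gamma$-part, combined with the shift of the Heisenberg level by $-\kk_c=\frac{1}{2}\kk_\g$, reproduces the level-$\kk$ OPEs; this is a direct super-analogue of the classical Wakimoto check, with the supertrace replacing the trace throughout.

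Injectivity follows by a standard filtration argument: both sides carry compatible PBW/Li-type filtrations, and the induced map on associated graded superspaces is identified with the embedding of symbols $\g[t^{-1}]\hookrightarrow \C[T^*J_\infty N_+]\otimes \C[J_\infty\h]$ coming from the open Bruhat cell $U=HN_+\subset N_-\backslash G$. For the characterization as the joint kernel of the screenings, one checks from the explicit formulas that each $S_i$ annihilates the images of the generators of $V^\kk(\g)$; this reduces, for each simple root $\alpha_i$, to an $\sll_2$-type (or $\gl_{1|1}$-type in the case of the isotropic root $\alpha_n$) computation of residues. At generic levels, the containment $\widetilde{\rho}(V^\kk(\g))\subset \bigcap\Ker S_i$ is promoted to equality by a character comparison: the Wakimoto module with generic highest weight is isomorphic to the corresponding Verma module, and the Euler characteristic of the screening complex matches that of $V^\kk(\g)$.

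The main obstacle is the careful super-sign bookkeeping in the OPE computations and, in particular, the treatment of the isotropic simple root $\alpha_n$ of $\gl_{n|1}$: the associated $\beta\gamma$-pair is odd, so the screening operator for $\alpha_n$ is built from an odd exponential field, and its residue must be interpreted with supercommuting rather than commuting factors. Once this sign convention is fixed consistently with the parity grading of $\sA\otimes\heis{\kk-\kk_c}$, the rest of the proof proceeds as a direct super-analogue of the arguments in \cite{F}, which justifies stating the result without a full reproduction of the computation.
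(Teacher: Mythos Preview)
The paper does not actually prove this proposition: immediately before the statement it says that ``the proof is a modification of the arguments in \cite{F} to this setting, and thus we will only state the results.'' Your proposal is precisely such a modification sketch---lifting the finite embedding \eqref{finite Wakimoto} to the chiral level with the anomaly shift to $\kk-\kk_c$, checking injectivity via associated-graded PBW, and upgrading the inclusion into $\bigcap\Ker S_i$ to equality at generic levels by a character/Verma comparison---so it matches what the paper implicitly defers to \cite{F}. You also correctly flag the one genuinely new point in the super setting, namely the odd simple root $\alpha_n$ and the resulting odd screening current; this is consistent with the paper's later use (Proposition~\ref{Wakimoto for principal super I}) where $[e_n^R]=a_{\alpha_n}$ is the odd generator of $\sA_\OO$.
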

\noindent
By construction, the map $\w{\rho}$ is a chiralization of the embedding in \eqref{finite Wakimoto} in the sense that by applying Zhu's functor one obtains \eqref{finite Wakimoto}.

The realization of $V^\kk(\g)$ in the above proposition is called the \emph{Wakimoto realization} (see e.g.\ \cite{F}).
The homomorphism $\w{\rho}$ induces a family of $V^\kk(\g)$-modules  $$\affWak{\lambda}:=\sA\otimes \pi^{\kk-\kk_c}_{\h,\lambda},\qquad (\lambda\in \h^*)$$
obtained by restriction, called the \emph{Wakimoto representations} over $V^\kk(\g)$.

The homomorphism $\rho^R$ in \eqref{eq:rho} induces an embedding of vertex superalgebras 
\begin{align*}
\rho^R\colon V^0(\nil_+) \hookrightarrow \sA\otimes \pi^{\kk-\kk_c}_\h.
\end{align*}
Note that the two embeddings
\begin{align}\label{left and right centralize}
\w{\rho} \colon V^0(\nil_+)\hookrightarrow \sA\otimes \pi^{\kk-\kk_c}_\h \hookleftarrow V^0(\nil_+) \colon \rho^R
\end{align} 
centralize each other.

\begin{lemma}\label{FF center goes Heis}
At the critical level $\kk=\kk_c$, we have 
\begin{align*}
\w{\rho}\colon\cent(V^{\kk_c}(\g)) \hookrightarrow \pi^{0}_\h \subset \mathcal{A}\otimes \pi^{0}_\h.
\end{align*}
\end{lemma}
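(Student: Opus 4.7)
The plan is to realize $\pi^0_\h$ as the commutant of $\rho^R(\nil_+)$ inside $\sA\otimes \pi^0_\h$ at the critical level, and then to show that the Wakimoto image of the center lands in this commutant. This is the chiral counterpart of the finite-dimensional fact that the image of the Harish-Chandra type embedding $\rho(\cent(U(\g)))\subset \D(N_+)\otimes U(\h)$ is forced into $U(\h)$ by the commuting left action of $N_+$ on itself.

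First I would establish the commutant identity
\begin{align*}
    \Com(\rho^R(\nil_+),\sA\otimes \pi^0_\h)=\pi^0_\h.
\end{align*}
The inclusion $\supset$ is immediate, since $\pi^0_\h$ is commutative at level zero and $\rho^R(\nil_+)$ is supported in the first tensor factor. For the reverse inclusion, these same two observations split the commutant as $\Com(\rho^R(\nil_+),\sA)\otimes \pi^0_\h$, reducing matters to the claim $\Com(\rho^R(\nil_+),\sA)=\C\wun$. This is a chiralization of $\D(N_+)^{N_+}=\C$ for the free left multiplication action: the fields $\rho^R(e_\alpha)=\partial_{z_\alpha}+\cdots$ form a complete system of (super) vector fields along the fibres of $N_+\to \mathrm{pt}$, and their joint annihilator on the PBW-type basis of $\sA$ in the $a_\alpha,a^*_\alpha$ collapses to the constants, both in the even and the odd directions.

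Second, I would upgrade \eqref{left and right centralize} to the stronger statement that $\rho^R(V^0(\nil_+))$ centralizes the whole of $\w{\rho}(V^{\kk_c}(\g))$, not merely $\w{\rho}(V^0(\nil_+))$. At generic $\kk$ this is natural: the joint kernel of the screening operators $S_i$ from Proposition \ref{Wakimoto for affine I} sits inside $\Com(\rho^R(\nil_+),\sA\otimes \pi^{\kk-\kk_c}_\h)$, since each integrand $e_i^R\hwt{-\frac{1}{k+n}\alpha_i}$ is assembled from $\rho^R(e_i)$ and a Heisenberg exponential, so that on $N_-\backslash G$ the right action of $\g$ commutes with left multiplication by $\nil_+$. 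The containment is preserved in the limit $\kk\to \kk_c$ by continuity. Equivalently one verifies directly on Chevalley generators by $\lambda$-bracket computation that $\lm{\rho^R(e_\alpha)}{\w{\rho}(x)}=0$ for every $x\in \g$, which chiralizes the commutation of right and left translations on $G$.

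Combining the two steps, any $z\in \cent(V^{\kk_c}(\g))\hookrightarrow \w{\rho}(V^{\kk_c}(\g))\subset \sA\otimes \pi^0_\h$ commutes with $\rho^R(\nil_+)$ and hence lies in $\pi^0_\h$, which gives the lemma. The main obstacle I anticipate is the second step: extending the centralization from $\w{\rho}(V^0(\nil_+))$ to all of $\w{\rho}(V^{\kk_c}(\g))$ in the super setting, where the explicit Wakimoto formulas for $\w{\rho}(f_\beta)$ couple nontrivially to both the even and odd generators of $\sA$ and must be tracked with care.
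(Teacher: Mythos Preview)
Both steps contain genuine errors.

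In Step 1, the claimed identity $\Com(\rho^R(\nil_+),\sA)=\C\wun$ is false. Already for $\nil_+=\C e$ one-dimensional, with $\sA$ a single $\beta\gamma$-system and $\rho^R(e)=a$, the commutant $\Com(a,\sA)$ contains the entire differential polynomial ring $\C[\partial^n a\mid n\geq 0]$, since $\lm{a}{a}=0$. The finite-dimensional analogue is not $\D(N_+)^{N_+}=\C$ but rather that the commutant of the right-invariant vector fields in $\D(N_+)$ is the ring of left-invariant differential operators, isomorphic to $U(\nil_+)$ and far from scalars. Consequently $\Com(\rho^R(\nil_+),\sA\otimes\pi^0_\h)$ strictly contains $\pi^0_\h$; for instance, it contains every $\w{\rho}(e_\alpha)$.

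In Step 2, the stronger centralization $\lm{\rho^R(e_\alpha)}{\w{\rho}(x)}=0$ for all $x\in\g$ is also false: it already fails for $x\in\h$. In the Wakimoto realization $\w{\rho}(h_i)$ acts on the $\sA$ factor through the adjoint action of $H$ on $N_+$, under which $e^R_\alpha$ carries weight $\alpha$, so $\lm{\w{\rho}(h_i)}{e^R_\alpha}=\alpha(h_i)\,e^R_\alpha\neq 0$. Geometrically, right multiplication by $H$ on $N_-\backslash G\supset HN_+$ conjugates the $N_+$ factor, and conjugation does not commute with one-sided multiplication. Your screening-operator justification conflates ``annihilated by $S_i=\int e_i^R\hwt{-\frac{1}{k+n}\alpha_i}$'' with ``commutes with $e_i^R$''; these are unrelated conditions.

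The paper's argument repairs both issues at once by reversing the roles of the two $\nil_+$-actions and adding an $\h$-weight step. One writes a PBW basis of $\sA\otimes\pi^0_\h$ in the mixed generators $\widehat h_i:=\w{\rho}(h_i)$, $e^R_\alpha:=\rho^R(e_\alpha)$, and $a^*_\alpha$. Commuting with $\w{\rho}(\nil_+)$ --- which the center does tautologically inside $V^{\kk_c}(\g)$, so no extension of \eqref{left and right centralize} is needed --- forces the $a^*$'s to disappear, because left-invariant vector fields on $N_+$ commute with the $e^R_\beta$'s and act freely on functions. This leaves monomials in $\widehat h$ and $e^R$. Now commuting with $\w{\rho}(\h)$ forces $\h$-weight zero; since each $e^R_\alpha$ has weight $\alpha\neq 0$, only monomials in the $\widehat h_i$ survive, i.e.\ elements of $\pi^0_\h$. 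The $\h$-weight step is exactly what cuts down the oversized commutant that your Step~1 produces.
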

\proof
One can show the assertion by adapting, to our setting, the proof of \cite[Lemma 7.1]{F} for the simple Lie algebras.
Observe that the lexicographically ordered monomials of the form
\begin{align*}
\prod_{p_j<0 }\widehat{h}_{i_j,(p_j)} \prod_{q_j<0 }e^R_{\alpha_j,(q_j)}\prod_{r_j\leq 0 }a^*_{\alpha_j,r_j} \mathbf{1} 
\end{align*}
form a basis of $\sA\otimes \pi^{0}_\h$ where we set $\widehat{h}_{i_j}=\w{\rho}(h_{i_j})$ runs the basis of $\h$. 
Since 
$$\cent(V^{\kk_c}(\g))\subset \Com(V^0(\nil_+), V^{\kk_c}(\g)),$$ Proposition \ref{Wakimoto for affine I} implies that $\cent(V^{\kk_c}(\g))$ embeds into the subspace spanned by 
\begin{align*}
\prod_{p_j<0 }\widehat{h}_{i_j,p_j} \prod_{q_j<0 }e^R_{\alpha_j,q_j} \mathbf{1}.
\end{align*}
It follows from $\cent(V^{\kk_c}(\g))\subset \Com(V^{\kk_c}(\h), V^{\kk_c}(\g))$ that $\cent(V^{\kk_c}(\g))$ lies in the subspace of $\h$-weight 0. 
As $e^R_{\alpha}$ has $\h$-weight $\alpha$, it follows that $\cent(V^{\kk_c}(\g))$ lies in the subspace spanned by $\prod_{p_j<0 }\widehat{h}_{i_j,p_j} \mathbf{1}$, that is, $\pi^0_\h$. This completes the proof.
\endproof

\subsection{\tmath{$\W$}-superalgebra case}

Next, we apply the quantum Hamiltonian reduction $\HH_{\OO}$ for the regular nilpotent element $f$ to the Wakimoto realization in Proposition \ref{Wakimoto for affine I} and obtain a homomorphism 
\begin{align*}
[\w{\rho}]\colon \W^\kk(\g)\rightarrow \HH_{\OO}^0(\sA\otimes \pi^{\kk-\kk_c}_\h).
\end{align*}
Let us now change the coordinates of $\C[N_+]$ so that
\begin{align*}
    \n_+= \n_+^0 \rtimes \n_+^+ \overset{\sim}{\longrightarrow}N_+^0 \rtimes N_+ \simeq N_+
\end{align*}
by using the good grading and, accordingly, decompose $\sA\simeq \sA_\OO\otimes \sA_+$
where 
$$\sA_\OO=\langle a_\alpha, a_\alpha^* \mid \alpha \in \Delta_+^0\rangle,\quad \sA^+=\langle a_\alpha, a_\alpha^* \mid \alpha \in \Delta_+^+\rangle.$$
\begin{lemma}
    There is an isomorphism of vertex superalgebras
\begin{align*}
\HH_{\OO}^n(\sA\otimes \pi^{\kk-\kk_c}_\h)\simeq 
\delta_{n,0} (\sA_\OO\otimes \pi^{\kk-\kk_c}_\h).
\end{align*}
Moreover,
 \begin{align*}
\DS^n(\affWak{\lambda})\simeq 
\delta_{n,0} \affWak{\OO,\lambda},\qquad \affWak{\OO,\lambda}:=\sA_\OO\otimes \pi^{\kk-\kk_c}_{\h,\lambda}
\end{align*}
holds as modules over $\affWak{\OO}:=\sA_\OO\otimes \pi^{\kk-\kk_c}_\h$.
\end{lemma}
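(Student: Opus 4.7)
The plan is to apply the BRST reduction functor to the Wakimoto realization of $V^\kk(\g)$ from Proposition \ref{Wakimoto for affine I} and compute the cohomology via a spectral-sequence argument.  Under the good grading $\Gamma$ of Section \ref{sec: reg Wsalg}, the positive roots split as $\Delta_+ = \Delta_+^0 \sqcup \Delta_+^+$ with $\Delta_+^0 = \{\alpha_n\}$ being the unique grade-zero positive root, so $\sA \simeq \sA_\OO \otimes \sA_+$.  The BRST fermions $\psi_\alpha, \psi_\alpha^*$ are indexed by $S_+ = \Delta_+^+$, and the BRST complex factors as a graded vector superspace
\begin{align*}
C^\bullet(\sA \otimes \pi^{\kk-\kk_c}_\h) \simeq (\sA_\OO \otimes \pi^{\kk-\kk_c}_\h) \otimes \bigl(\sA_+ \otimes \bigwedge{}^\bullet(\g_+)\bigr).
\end{align*}
The goal is to show that, modulo the spectator first factor, the BRST differential restricted to the second factor has cohomology $\C$ concentrated in degree $0$.

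By the construction of the Wakimoto realization via the embedding $\rho\colon U(\g) \hookrightarrow \D(N_+) \otimes U(\h)$ of \eqref{finite Wakimoto}, the image $\w{\rho}(e_\alpha)$ for $\alpha \in S_+$ has the form $a_\alpha + R_\alpha$, where $R_\alpha$ is a normally-ordered polynomial in the generators $a_\beta, a_\beta^*$ for roots $\beta$ of strictly greater height, plus Cartan contributions.  Equipping the BRST complex with a decreasing filtration indexed by the root height (with $a_\alpha, \psi_\alpha^*$ weighted positively and $a_\alpha^*, \psi_\alpha$ and Cartan modes weighted nonpositively), the associated graded differential reduces to the leading Koszul-type operator
\begin{align*}
d_0 = \sum_{\alpha \in S_+} (-1)^{p(e_\alpha)} (a_\alpha + (f|e_\alpha))\, \psi_\alpha^*,
\end{align*}
which acts on $\sA_+ \otimes \bigwedge{}^\bullet(\g_+)$ and trivially on $\sA_\OO \otimes \pi^{\kk-\kk_c}_\h$.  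Since $(f|e_\alpha)$ is non-zero precisely for those simple roots of $S_+$ arising in $f = \sum_{i=1}^{n-1} e_{i+1,i}$ and vanishes on higher roots, the translated fields $\{a_\alpha + (f|e_\alpha)\}_{\alpha \in S_+}$ form a regular sequence in $\sA_+$, and $d_0$ is the corresponding Koszul differential, with cohomology $\C$ in degree $0$.  As the filtration is bounded on each conformal-weight subspace, the associated spectral sequence converges and degenerates at the $E_2$-page, giving the first isomorphism.

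For the module statement, $\affWak{\lambda} = \sA \otimes \pi^{\kk-\kk_c}_{\h,\lambda}$ differs from $\sA \otimes \pi^{\kk-\kk_c}_\h$ only by a weight shift in the Cartan sector, and $d_0$ is independent of this shift; the same filtration argument therefore yields $\DS^n(\affWak{\lambda}) \simeq \delta_{n,0}\, \affWak{\OO,\lambda}$ as an $\affWak{\OO}$-module.  The main technical point is the verification that $\w{\rho}(e_\alpha)$ has the claimed leading form and that the shifted sequence is regular in $\sA_+$.  Both are super-analogues of the classical arguments in \cite{F}; since the only odd positive root $\alpha_n$ lies in $\Delta_+^0$ and does not appear in the BRST fermion complex, the super case introduces no essentially new homological subtleties beyond routine parity bookkeeping.
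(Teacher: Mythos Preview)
The paper states this lemma without proof, treating it as a standard fact whose non-super version is in \cite{F} and whose super extension follows the arguments of \cite{CGN21, G20}.  Your spectral-sequence/Koszul approach is indeed the standard route, and the overall strategy is correct.

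There is, however, a factual error in your last paragraph: $\alpha_n$ is \emph{not} the only odd positive root of $\gl_{n|1}$.  The odd positive roots are $\alpha_i+\alpha_{i+1}+\cdots+\alpha_n$ for $i=1,\dots,n$ (corresponding to $e_{i,n+1}$), and under the good grading $\Gamma$ of \S\ref{sec: reg Wsalg} the root $\alpha_i+\cdots+\alpha_n$ has grade $n-i$.  Hence $n-1$ odd roots lie in $\Delta_+^+=S_+$, so $\sA_+$ contains $n-1$ fermionic $bc$-pairs and the ghost algebra $\bigwedge{}^\bullet(\g_+)$ contains $n-1$ \emph{bosonic} ghost pairs.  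Your assertion that ``the super case introduces no essentially new homological subtleties'' is therefore based on a false premise.  Fortunately the conclusion survives: for an odd root $\alpha\in S_+$ the leading contribution $(a_\alpha+(f|e_\alpha))\psi_\alpha^*$ pairs a fermionic field with a bosonic ghost, and the resulting rank-one complex is still acyclic with cohomology $\C$ in degree $0$ (this is the content of the mixed Koszul/de Rham argument in \cite{KW04}).  You should replace the incorrect parity claim with this observation.

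Two smaller points.  First, in the Wakimoto realization via the right action on $N_-\backslash G$, the images $\w{\rho}(e_\alpha)$ for $\alpha\in\Delta_+$ are differential polynomials in the $a_\beta,a_\beta^*$ alone and do \emph{not} involve Cartan fields; the Heisenberg generators enter only through $\w{\rho}(h_i)$ and $\w{\rho}(f_i)$.  Second, the correct filtration is not by root height but by the $\Gamma$-degree on $\sA_+\otimes\bigwedge{}^\bullet(\g_+)$ (or an equivalent bigrading as in \cite{KW04, G20}); the coordinate change $\n_+\simeq \n_+^0\rtimes\n_+^+$ in the paragraph preceding the lemma is precisely what makes the leading term of $\w{\rho}(e_\alpha)$ equal to $a_\alpha$ for $\alpha\in S_+$ in these coordinates.
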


\begin{proposition}\label{Wakimoto for principal super I}
The homomorphism 
$$[\w{\rho}]\colon \W^\kk(\g)\rightarrow \affWak{\OO}=\sA_\OO\otimes \pi^{\kk-\kk_c}_\h$$
is an embedding. Moreover, the image is characterized by the kernel of the screening operators at generic levels:
\begin{align*}
\W^\kk(\g) \simeq \bigcap \Ker S_i^\OO \subset \sA_\OO \otimes \heis{\kk-\kk_c}
\end{align*}
$$S_i^\OO=\int Y([e_{i}^R]\mathrm{e}^{-\frac{1}{k+n}\alpha_i},z)\mathrm{d}z\colon \affWak{\OO,0}\rightarrow \affWak{\OO,-\alpha_i},\quad (i=1,\cdots,n).$$
with 
\begin{align*}
[e_i^R]=\mathbf{1}, \quad (i=1,\dots, n-1),\qquad [e_n^R]=a_{\alpha_{n}}.
\end{align*}
\end{proposition}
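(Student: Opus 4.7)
The plan is to obtain this result by applying the BRST reduction $\HH_\OO^\bullet$ to the affine Wakimoto embedding $\w{\rho}\colon V^\kk(\g)\hookrightarrow \sA\otimes \pi^{\kk-\kk_c}_\h$ from Proposition \ref{Wakimoto for affine I}. The preceding lemma already identifies the target as $\HH_\OO^0(\sA\otimes \pi^{\kk-\kk_c}_\h)=\affWak{\OO}$, so the two remaining tasks are (i) showing that the induced map $[\w{\rho}]\colon \W^\kk(\g)\to \affWak{\OO}$ is injective, and (ii) identifying its image at generic levels as the joint kernel of the induced screenings. For (i), I would use the short exact sequence of $V^\kk(\g)$-modules
\begin{align*}
0\to V^\kk(\g)\xrightarrow{\w{\rho}} \sA\otimes \pi^{\kk-\kk_c}_\h \to Q \to 0
\end{align*}
and the long exact sequence in $\HH_\OO^\bullet$. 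It suffices to show $\HH_\OO^{-1}(Q)=0$, which can be verified by passing to the Li filtration: the associated graded map becomes the classical Miura realization of the jet scheme of the Kostant-type slice, reducing the assertion to a commutative statement where injectivity is transparent.

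For (ii), the key point is that each affine screening $S_i$ commutes with the BRST differential $d$. Indeed, by the centralization \eqref{left and right centralize}, $S_i$ lies in $\Com(\rho^R(V^0(\nil_+)), \sA\otimes \pi^{\kk-\kk_c}_\h)$, while $d$ is built from the $\rho^R$-currents and the ghosts. Hence $S_i$ descends to $S_i^\OO\colon \affWak{\OO,0}\to \affWak{\OO,-\alpha_i}$. The explicit form of $[e_i^R]$ follows from the decomposition $\sA\simeq \sA_\OO\otimes \sA^+$ dictated by the good grading $\Gamma$: for $i<n$, the simple root $\alpha_i$ has $\Gamma$-weight $1$, so the factors $a^*_{\alpha_i}$ appearing in $e_i^R$ are cohomologically trivialized (absorbed by the BRST reduction together with the paired ghosts), leaving $[e_i^R]=\wun$; for the unique simple root $\alpha_n$ of $\Gamma$-weight $0$, the generators $a_{\alpha_n}, a^*_{\alpha_n}$ survive in $\sA_\OO$, and a direct computation of $e_n^R$ using \eqref{eq:rho} gives $[e_n^R]=a_{\alpha_n}$.

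The kernel characterization at generic levels then follows from a character comparison: by the injectivity in (i) we have $\W^\kk(\g)\subset \bigcap_i \Ker S_i^\OO$, and both subalgebras of $\affWak{\OO}$ have the same graded character at generic $\kk$ by the Euler-Poincar\'e principle applied to a Felder-type complex built from the $S_i^\OO$, mirroring the argument used in \cite{CGN21} for the dual realization of Theorem \ref{thm: Wakimoto for sW}. The main obstacle is the verification in (ii) that the higher-order corrections in $e_i^R$ (for $i<n$) become exact in the BRST cohomology; this requires carefully tracking how the BRST differential acts on the generators of $\sA^+$ and using that the structure constants $P_i^\alpha(z)$ in \eqref{eq:rho} are polynomial in the variables that are cohomologically trivialized.
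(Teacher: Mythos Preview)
Your overall plan is sound and in the spirit of the paper, which simply defers to \cite{CGN21} together with Proposition~\ref{Wakimoto for affine I}; the paper gives no details, so a self-contained argument like yours is a reasonable substitute. However, there is a genuine slip in your justification of step (ii).

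You write that ``$d$ is built from the $\rho^R$-currents and the ghosts'' and that ``$S_i$ lies in $\Com(\rho^R(V^0(\nil_+)),\dots)$''. Both halves are inverted. The BRST differential $d$ acts through the $V^\kk(\g)$-module structure on $\sA\otimes\pi_\h^{\kk-\kk_c}$, which is $\w{\rho}$, \emph{not} $\rho^R$; and the screening $S_i$ is built \emph{from} $e_i^R$ together with the vertex operator $\mathrm{e}^{-\frac{1}{k+n}\alpha_i}$, so the centralization \eqref{left and right centralize} alone does not tell you that $S_i$ commutes with $d$. What you actually need (and what is standard in Wakimoto theory, cf.\ \cite{F,CGN21}) is that each $S_i\colon \affWak{0}\to\affWak{-\alpha_i}$ is a $V^\kk(\g)$-module intertwiner; this is a computation, not a consequence of the left/right centralization, and it is precisely this intertwining property that makes $S_i\otimes 1$ a chain map for the BRST complex. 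Once that is in place, the induced screenings $S_i^\OO$ exist and the computation of $[e_i^R]$ proceeds as you describe.

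For (i), the long exact sequence reduces injectivity to $\HH_\OO^{-1}(Q)=0$ as you say, but ``passing to the Li filtration'' does not directly compute this cohomology group; it rather shows that $\gr[\w{\rho}]$ is injective. You can close the gap either by arguing that injectivity on the associated graded (for a separated filtration) implies injectivity, or---closer to the paper's route---by observing that after identifying $\sA_\OO\simeq\mathcal{A}_\psi$ via boson--fermion correspondence the map $[\w{\rho}]$ lands in the realization of Theorem~\ref{thm: Wakimoto for sW}, whose injectivity is already established in \cite{CGN21}.
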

\proof
The proof is obtained straightforwardly by using Proposition \ref{Wakimoto for affine I} and the free field realization obtained in \cite{CGN21}.
\endproof
Note that $\sA_\OO= \langle a_{\alpha_n}, a_{\alpha_n}^* \rangle$ and thus 
\begin{align*}
    \sA_\OO \simeq V_\Z,\quad a_{\alpha_n}, a_{\alpha_n}^*\mapsto \hwt{x}, \hwt{-x}
\end{align*}
by the boson-fermion correspondence. In the rest of this paper, we identify these two realizations. 
Thus, this realization agrees with the one in Theorem \ref{thm: Wakimoto for sW}, and we will identify them in the below.

Since the center
\begin{align*}
\cent(V^{\kk_c}(\g))\subset V^{\kk_c}(\g) \subset C_{f}^\bullet(V^{\kk_c}(\g))
\end{align*}
lies in the center of the BRST complex, we have, in particular, $\cent(V^{k_c}(\g))\subset \Ker\ d$.
Thus, we have the natural homomorphism
\begin{align*}
\iota\colon \cent(V^{\kk_c}(\g)) \rightarrow \cent(\W^{\kk_c}(\g))\subset \W^{\kk_c}(\g). 
\end{align*}
\begin{lemma}\label{FF center lies in W-center}
The map $\iota$ is injective.
\end{lemma}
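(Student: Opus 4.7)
The plan is to deduce injectivity of $\iota$ from the Wakimoto realizations at both the affine and the $\W$-algebraic levels, by placing $\iota$ as one edge of a commutative square whose other three edges are known to be injections.

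Concretely, Lemma \ref{FF center goes Heis} supplies an embedding $\w{\rho}\colon \cent(V^{\kk_c}(\g))\hookrightarrow \pi^0_\h\subset \sA\otimes\pi^0_\h$, and Proposition \ref{Wakimoto for principal super I} gives an embedding $[\w{\rho}]\colon \W^{\kk_c}(\g)\hookrightarrow \sA_\OO\otimes\pi^0_\h$ after BRST reduction. I would fit these into the square
\begin{equation*}
\begin{tikzcd}
\cent(V^{\kk_c}(\g)) \arrow[r,"\iota"] \arrow[d,hook,"\w{\rho}"'] & \W^{\kk_c}(\g) \arrow[d,hook,"{[\w{\rho}]}"]\\
\pi^0_\h \arrow[r,hook] & \sA_\OO\otimes \pi^0_\h
\end{tikzcd}
\end{equation*}
whose bottom arrow is the natural inclusion $u\mapsto \wun\otimes u$. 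Once commutativity is established, the composition $[\w{\rho}]\circ \iota$ coincides with the injective composition along the bottom path, which forces $\iota$ itself to be injective.

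Commutativity should be checked at the chain level. The map $\w{\rho}\otimes \id$ extends to a morphism of BRST complexes $C_f^\bullet(V^{\kk_c}(\g))\to C_f^\bullet(\sA\otimes\pi^0_\h)$ intertwining the two differentials, since the BRST charge on the right is obtained by replacing $e_\alpha$ with $\w{\rho}(e_\alpha)$ while keeping the same ghost fields; by construction this descends to $[\w{\rho}]$ on $\HH^0$. Any $c\in\cent(V^{\kk_c}(\g))$ is $d$-closed in ghost degree $0$ (as recalled in the paragraph just above the lemma), so $\iota(c)$ is represented by $c$ itself and $[\w{\rho}](\iota(c))$ is represented by $\w{\rho}(c)\in\pi^0_\h$. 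Under the identification $\HH^0(\sA\otimes\pi^0_\h)\simeq \sA_\OO\otimes\pi^0_\h$ from the lemma preceding Proposition \ref{Wakimoto for principal super I}, an element of $\pi^0_\h$ should correspond to its image under the natural inclusion $\pi^0_\h\hookrightarrow \sA_\OO\otimes\pi^0_\h$, which yields the required commutativity.

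The one subtle point is precisely this last identification: one must confirm that the quasi-isomorphism underlying the cohomology computation $\HH^0(\sA\otimes\pi^0_\h)\simeq \sA_\OO\otimes\pi^0_\h$ restricts to the identity on the Heisenberg tensor factor $\pi^0_\h$. This is morally clear because the Hamiltonian reduction interacts only with the $\sA$-factor and leaves $\pi^0_\h$ pointwise fixed, but it is the step where one would need to unpack the explicit quasi-isomorphism. Granting that, the diagram chase above closes the argument.
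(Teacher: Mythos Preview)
Your proposal is correct and follows essentially the same approach as the paper: both argue injectivity via a commutative square built from the two Wakimoto realizations $\w{\rho}$ and $[\w{\rho}]$. The paper's version is marginally cleaner in that it factors $\iota$ through $\cent(\W^{\kk_c}(\g))$ and uses Theorem~\ref{thm: Wakimoto for sCoset}~(2) to observe that $[\w{\rho}]$ already sends $\cent(\W^{\kk_c}(\g))$ into $\pi^0_\h$ (since at $\kk=\kk_c$ the coset coincides with the center and $\pi^0_\tori=\pi^0_\h$); the bottom arrow is then literally the identity on $\pi^0_\h$, so the ``subtle point'' you flag about the quasi-isomorphism restricting to the identity on the Heisenberg factor never arises.
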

\proof
The assertion follows from the Wakimoto realizations for both hand-sides in Lemma \ref{FF center goes Heis} and Theorem \ref{thm: Wakimoto for sCoset}, thanks to the following commutative diagram:
    \begin{center}
		\begin{tikzcd}
			\cent(V^{\kk_c}(\g))
			\arrow[d, hook, "\w{\rho}"']
			\arrow[r, "\iota"]&
			\cent(\W^{\kk_c}(\g)) \arrow[d, hook, "{[\w{\rho}]}"]
                \arrow[r,symbol=\subset]&
			\W^{\kk_c}(\g) \arrow[d, hook, "{[\w{\rho}]}"]
			\\
		      \pi^0_{\h} \arrow[r, "\mathrm{id}"]&
			  \pi^0_{\h} \arrow[r,symbol=\subset]&
			\sA_\OO\otimes \pi^0_{\h}.
		\end{tikzcd}
	\end{center}
\endproof

\section{Center of affine \tmath{$\gl_{n|1}$} at the critical level}\label{sec: Center of affine at the critical level}
\subsection{Segal--Sugawara vectors}
In \cite{MR}, Molev and Ragoucy constructed a family of elements in $\cent(V^{\kk_c}(\gl_{m|n}))$ by exploiting the following matrix-valued differential operator
\begin{align*}
    \pd+ \widehat{E}=\sum_{i,j}e_{i,j}\otimes (\delta_{i,j}\pd+ e_{i,j(-1)})(-1)^{p(i)}
\end{align*}
in $R:=\End(\C^{m|n})\otimes U(\widehat{\gl}_{m|n}\rtimes \C\pd)$. Here, the second factor is the enveloping algebra of the Lie superalgebra $\widehat{\gl}_{m|n}\rtimes \C\pd$ and the product on  $R$ is defined through the Koszul sign rule:
$$(a\otimes b)(a'\otimes b')=(-1)^{p(b) p(a')}aa'\otimes bb'$$
    for parity homogeneous elements $a,a',b,b'$.
We consider the supertrace on the first factor:
\begin{align*}
    \str\colon R\rightarrow U(\widehat{\gl}_{m|n}\rtimes \C\pd)
\end{align*}
and compose it with the projection
 \begin{align*}
     U(\widehat{\gl}_{m|n}\rtimes \C\pd)\simeq U(\widehat{\gl}_{m|n})\otimes \C[\pd]\twoheadrightarrow V^\kk(\gl_{m|n})\otimes \C[\pd]
 \end{align*}
 to obtain the supertrace
 \begin{align*}
    \str\colon R\rightarrow U(\widehat{\gl}_{m|n}\rtimes \C\pd)\twoheadrightarrow V^\kk(\gl_{m|n})\otimes \C[\pd]
\end{align*}
by abuse of notation.

\begin{theorem}[\cite{MR}]\label{Segal-Sugawara vectors}
At the critical level $\kk=\kk_c$, the coefficients $s_{p,q}$ in the expansions 
\begin{align*}
    \str(\pd+\widehat{E})^p=s_{p,0}\pd^n+ s_{p,1}\pd^{p-1}+\dots+s_{p,p}
\end{align*}
for $p\geq0$ lie in the center $\cent(V^{\kk_c}(\gl_{m|n}))$.
\end{theorem}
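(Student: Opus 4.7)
The plan is to apply the standard vertex-algebraic criterion for centrality: an element $v\in V^{\kk_c}(\gl_{m|n})$ is central if and only if $e_{a,b(r)}v=0$ for every $e_{a,b}\in\gl_{m|n}$ and $r\geq 0$. Rather than treating each $s_{p,q}$ separately, I would work uniformly with the matrix-valued generating function $\str(\pd+\widehat{E})^p$ and analyze the action of each mode $e_{a,b(r)}$ on it, extracting the coefficients in $\pd$ afterwards. Two observations simplify the task: first, since $s_{p,q}$ has conformal weight $q$, the modes $e_{a,b(r)}$ can only produce non-zero results for $r\leq q$, giving a finite list of cases; second, the modes $r=0$ and $r=1$ are the primitive cases, from which $r\geq 2$ are controlled via the translation operator $T=\pd$ through the relation $[T,e_{a,b(r)}]=-r\,e_{a,b(r-1)}$.

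The $r=0$ case amounts to $\gl_{m|n}$-invariance. Here, the mode $e_{a,b(0)}$ acts on each entry of $\widehat{E}$ by the adjoint bracket. After the sign bookkeeping dictated by the Koszul rule in $R=\End(\C^{m|n})\otimes U(\w{\gl}_{m|n}\rtimes \C\pd)$, this is encoded as a matrix commutator
$$e_{a,b(0)}\cdot(\pd+\widehat{E})=\bigl[e_{a,b}\otimes \mathbf{1},\,\pd+\widehat{E}\bigr].$$
By the Leibniz rule, the action on $(\pd+\widehat{E})^p$ is again a total commutator, which the cyclic property of the supertrace (with its super-signs) annihilates, yielding $e_{a,b(0)}\cdot\str(\pd+\widehat{E})^p=0$.

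The case $r=1$ is the heart of the argument and is where the critical level enters. Applying $e_{a,b(1)}$ to a single factor of $\widehat{E}$ produces a scalar proportional to $\kk_c(e_{a,b},e_{p,q})$. Iterating via the non-commutative Wick rule, one gets a sum of terms in which two of the $p$ matrix factors are ``contracted'' by the bilinear form, the remaining factors being $(\pd+\widehat{E})$. The miracle at $\kk=\kk_c=-\tfrac{1}{2}\kk_\g$ is that the explicit form
$$\kk_c(e_{i,j},e_{p,q})=\delta_{i,q}\delta_{j,p}(-1)^{p(i)+p(q)}-(m-n)\delta_{i,j}\delta_{p,q}(-1)^{p(i)}$$
given in the excerpt causes these contraction terms to reassemble into another total commutator under the supertrace, the $(m-n)$-piece being exactly what is needed to absorb the normal-ordering corrections arising when commuting the $\pd$-derivatives past the contracted pair. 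Thus $e_{a,b(1)}\cdot\str(\pd+\widehat{E})^p=0$ at the critical level. The higher modes $r\geq 2$ are then obtained by writing $e_{a,b(r)}\cdot\str(\pd+\widehat{E})^p$ in terms of $T$-derivatives of lower-mode actions and invoking the already established vanishings.

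The main obstacle is organizing the combinatorics of this critical-level cancellation cleanly in the super setting. The cleanest strategy is to package all $r\geq 0$ actions into a single matrix identity in $R\otimes R$ -- essentially an $RTT$ or Manin-matrix identity for $\pd+\widehat{E}$ -- from which the centrality of the supertrace follows by a short argument using the cyclic property of the supertrace with super-signs. Verifying this matrix identity, with consistent Koszul signs throughout the super setting, is the technical core of the proof; once established, expanding in powers of $\pd$ isolates each $s_{p,q}$ and yields the theorem.
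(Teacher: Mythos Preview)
The paper does not give its own proof of this statement: Theorem~\ref{Segal-Sugawara vectors} is quoted from Molev--Ragoucy \cite{MR} without argument, so there is no in-paper proof to compare against.

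On the substance of your outline, there is one genuine gap. The reduction of the modes $r\geq 2$ to $r=0,1$ via the translation operator does not work in the direction you need. The identity $[T,e_{a,b(r)}]=-r\,e_{a,b(r-1)}$ lets you express a \emph{lower} mode in terms of a higher one, not the reverse; knowing $e_{a,b(0)}v=e_{a,b(1)}v=0$ gives no control over $e_{a,b(2)}v$. Concretely, for a weight-$2$ element such as $s_{2,2}$ one must check $e_{a,b(2)}s_{2,2}=0$ independently, and the translation relation yields nothing here. So the ``primitive cases $r=0,1$'' strategy, as written, does not close.

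What does work is exactly what you gesture at in your final paragraph: the Manin-matrix/$RTT$ identity for $\pd+\widehat{E}$, which packages the commutation with the full current $e_{a,b}(z)$ (hence all modes simultaneously) into a single matrix relation in $R\otimes R$. This is the approach of \cite{MR}: one verifies that $\pd+\widehat{E}$ is a Manin matrix at the critical level, and then the centrality of the supertrace of any power follows from the general theory of such matrices together with the cyclic property of $\str$. Your instincts about where the critical level enters (the contraction terms reassembling into a commutator) are correct, but that cancellation must be established uniformly in $r$ inside the matrix identity, not bootstrapped from $r=0,1$.
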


\subsection{Supersymmetric polynomials}\label{sec: supersymmetric polynomials}
The center $\cent(V^{\kk_c}(\gl_{m|n}))$ is closely related to the affine supersymmetric polynomials as observed in \cite{MM} for $\widehat{\gl}_{1|1}$.
Introduce the polynomial ring $R^{m|n}$ by
$$R^{m|n}:=\C[u_1,\dots,u_m,v_1,\dots, v_n],$$
which admits the natural action of the symmetric group $\mathfrak{S}_{m+n}$.
The ring of supersymmetric polynomilas $\Lambda^{m|n}$ is the subalgebra 
\begin{align*}
\Lambda^{m|n}:=\left\{f ; f|_{u_m=t,v_n=-t}\in R^{m-1|n-1} \right\}\subset (R^{m|n})^{\mathfrak{S}_m\times \mathfrak{S}_n} \subset R^{m|n}.
\end{align*}
It is known \cite{Ma} that $\Lambda^{m|n}$ is generated by the power sums 
\begin{align*}
    s_p=(u_1^p+\cdots +u_m^p)-(-1)^p(v_1^p+\cdots +v_n^p)\quad (p>0).
\end{align*}
The ring of affine supersymmetric polynomials $\Lambda^{m|n}_{\aff}$ is the (commutative) vertex algebraic extension of $\Lambda^{m|n}$; we consider the commutative vertex algebra
$$R^{m|n}_{\infty}:=\C[\pd^N u_1,\dots,\pd^N u_m, \pd^Nv_1,\dots, \pd^Nv_n\mid N\geq0]\simeq \C[J_\infty \h]$$
and introduce $\Lambda^{m|n}_{\aff}$ as the subalgebra generated by $\Lambda^{m|n}$ as a differential algebra:
$$\Lambda^{m|n}_{\aff}:=\langle \pd^N f\mid f\in \Lambda^{m|n}, N\geq 0 \rangle \subset R^{m|n}_{\infty}.$$

Now, we relate the affine supersymmetric polynomials to the Segal--Sugawara vectors Theorem \ref{Segal-Sugawara vectors} through the Wakimoto realization in Proposition \ref{Wakimoto for affine I}. 
We take the Poisson vertex superalgebras on the associated graded (super)vector spaces with respect to Li filtration (see \ref{sec: general coset}) in Proposition \ref{Wakimoto for affine I}, and consider the induced homomorphism
\begin{align*}
    \overline{\Psi}\colon \gr V^\kk(\gl_{n|1})\rightarrow \gr (\mathcal{A}\otimes \pi^{k+\dC}_\h)\simeq  \gr \mathcal{A}\otimes \gr \pi^{k+\dC}_\h.
\end{align*}

\begin{proposition}\label{affine ss polys in FF center}
There is an embedding
$$\Lambda^{n|1}_{\aff}\subset \gr\cent(V^{\kk_c}(\gl_{n|1})).$$
\end{proposition}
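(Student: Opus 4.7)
The plan is to produce explicit elements of $\gr\cent(V^{\kk_c}(\gl_{n|1}))$ whose images under $\overline{\Psi}$ generate $\Lambda^{n|1}_\aff$ inside $\gr\pi^0_\h$, using the Molev--Ragoucy Segal--Sugawara vectors $s_{p,q}\in\cent(V^{\kk_c}(\gl_{n|1}))$ of Theorem \ref{Segal-Sugawara vectors}, extracted from $\str(\pd+\widehat{E})^p$. By Lemma \ref{FF center goes Heis} the Wakimoto map sends the center into $\pi^0_\h$, so taking Li-associated graded yields an injection
$$\overline{\Psi}\colon \gr\cent(V^{\kk_c}(\gl_{n|1}))\hookrightarrow \gr\pi^0_\h\simeq R^{n|1}_\infty.$$

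To evaluate images, I would compose with the PVA projection $\mathrm{pr}\colon \gr\mathcal{A}\otimes\gr\pi^0_\h\twoheadrightarrow \gr\pi^0_\h$ sending $\bar a_\alpha,\bar a^*_\alpha\mapsto 0$, which is the identity on $\gr\pi^0_\h$. In the Wakimoto formulas the diagonal generator $e_{i,i}$ maps to $u_i$ plus bilinear corrections lying in the augmentation ideal $(\bar a,\bar a^*)$, while each off-diagonal $e_{i,j}$ $(i\neq j)$ lies entirely in this ideal. Hence under $\mathrm{pr}$ only constant walks survive in the matrix-product expansion of $(\pd+\widehat{E})^p$, and accounting for the $(-1)^{p(i)}$ sign in $\widehat{E}_{i,i}$ produces
$$\mathrm{pr}\circ\overline{\Psi}\bigl(\str(\pd+\widehat{E})^p\bigr)=\sum_{i=1}^n(\pd+u_i)^p-(\pd-u_{n+1})^p.$$
Denote the coefficient of $\pd^{p-q}$ on the right by $\sigma_{p,q}\in R^{n|1}_\infty$. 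Under the substitution $u_n=t$, $u_{n+1}=-t$, the terms $(\pd+u_n)^p$ and $(\pd-u_{n+1})^p$ cancel and the sum collapses to $\sum_{i<n}(\pd+u_i)^p$, independent of $t$; hence $\sigma_{p,q}\in \Lambda^{n|1}_\aff$ (under the identification $v_1\leftrightarrow u_{n+1}$).

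It remains to show that $\{\sigma_{p,q}\}$ together with their $\pd$-derivatives generate $\Lambda^{n|1}_\aff$. A direct computation of $(\pd+u)^q\wun$ in the jet ring (for example via Bell polynomials) yields
$$\sigma_{q,q}=s_q+\sum_{r=1}^{q-1}c_{q,r}\,\pd^{q-r}s_r,\qquad c_{q,r}\in\Q,$$
where $s_q=\sum_{i=1}^{n} u_i^q-(-1)^q u_{n+1}^q$ is the $q$-th supersymmetric power sum. Starting from $\sigma_{1,1}=s_1$ and inducting on $q$, each $s_q$ is recovered as a $\Q$-linear combination of elements of the form $\pd^N\sigma_{q',q'}$ with $q'\leq q$, using that $\gr\cent$ is $\pd$-stable. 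Since $\{s_q\}_{q\geq 1}$ generate $\Lambda^{n|1}$ as an algebra, their $\pd$-derivatives generate $\Lambda^{n|1}_\aff$. Combined with the injectivity of $\overline{\Psi}$, this yields the desired embedding $\Lambda^{n|1}_\aff\hookrightarrow \gr\cent(V^{\kk_c}(\gl_{n|1}))$.

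The main obstacle is the careful verification that $\mathrm{pr}\circ\overline{\Psi}(\widehat{E})$ reduces to the Cartan diagonal $\mathrm{diag}(u_1,\ldots,u_n,-u_{n+1})$: this requires the precise form of the Wakimoto realization for all $e_{i,j}$ in $\gl_{n|1}$, combined with the parity-twisted supertrace, and the observation that all off-diagonal and bilinear corrections sit in the ideal killed by $\mathrm{pr}$. Once this is in place the remaining supertrace computation and the inductive extraction of $s_q$ from $\sigma_{q,q}$ modulo lower-order derivative corrections is routine combinatorics.
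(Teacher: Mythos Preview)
Your approach is essentially the same as the paper's: both use the Molev--Ragoucy vectors $s_{p,p}$, push them through the Wakimoto embedding, and reduce to the Cartan diagonal by killing the $a_\alpha,a^*_\alpha$ contributions (legitimate because Lemma~\ref{FF center goes Heis} already forces the image into $\pi^0_\h$, so your projection $\mathrm{pr}$ is the identity there).

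The paper's argument is shorter because it works directly at the level of the top Li-symbol. The class of $s_{p,p}$ in $\gr^0=R_V\simeq S(\g)$ is just $\str(\bar E)^p$ (pick $\widehat E$ at every factor and commute freely), and under $\overline{\Psi}$ followed by your diagonal reduction this gives the bare power sum $s_p$ on the nose, with no derivative corrections. Since the $s_p$ generate $\Lambda^{n|1}_\aff$ as a differential algebra and $\gr\cent$ is $\pd$-stable, the inclusion follows immediately.

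Your inductive extraction of $s_q$ from the full $\sigma_{q,q}$ is therefore unnecessary: the derivative corrections you compute all sit in strictly higher Li-degree and are discarded when passing to the symbol in $\gr^0$. Incidentally, the specific shape you assert,
\[
\sigma_{q,q}=s_q+\sum_{r=1}^{q-1}c_{q,r}\,\pd^{q-r}s_r,
\]
is not correct for $q\geq 4$: already at $q=4$ a term $3\bigl(\sum_i(\pd u_i)^2-(\pd v_1)^2\bigr)$ appears, which is supersymmetric but is not a $\C$-linear combination of single iterated derivatives $\pd^{j}s_r$. This inaccuracy is harmless for the conclusion (only the Li-degree-$0$ part matters), but it illustrates why the symbol route taken in the paper is cleaner than tracking the full jet expression.
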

\proof
Note that one has an isomorphism $R_\infty^{n|1} \xrightarrow{\simeq} \gr \pi^{0}_\h$ which maps
\begin{align*}
     u_1,\dots, u_n \mapsto e_{1,1},\dots,e_{n,n},\quad v_1 \mapsto  e_{n+1,n+1}.
\end{align*}
Hence, it suffices to show that $\Im \overline{\Psi}$ contains the power sums $s_{p}$ ($p\geq0$) as $\overline{\Psi}$ is an embedding and restricts to 
\begin{align*}
    \overline{\Psi}\colon \gr\cent(V^{\kk_c}(\gl_{n|1})) \hookrightarrow \gr \pi^{0}_\h
\end{align*}
by Lemma \ref{FF center goes Heis}. 
By the formula in Proposition \ref{Wakimoto for affine I},
the image of the Segal--Sugawara vector $s_{p,p}$ agrees with 
\begin{align*}
    \str \left(\begin{array}{cccc}
        e_{1,1} &  & & \\
         & \ddots & &\\
         & & e_{n,n} &\\
         &&& -e_{n+1.n+1}
    \end{array} \right)^p 
    &= e_{1,1}^p+\cdots +e_{n,n}^p-(-e_{n+1,n+1})^p\\
    &= u_1^p+ +\cdots +u_n^p-(-1)^pv_1^p=s_p.
\end{align*}
This completes the proof.
\endproof

\subsection{Center}\label{sec: Center}
Now, we are ready to establish the main results of the paper.
We start with the center at the level of the associated graded vector space for Lie filtration.

\begin{theorem}\label{them: Detecting the FF center}\hspace{0mm}\\
\textup{(1)} The homomorphism $\iota$ is an isomorphism
\begin{align*}
\iota\colon \cent(V^{\kk_c}(\gl_{n|1}))\xrightarrow{\simeq} \cent(\W^{\kk_c}(\gl_{n|1})).
\end{align*}
\textup{(2)} There is an isomorphism of differential algebras 
\begin{align*}
 \gr\cent(V^{\kk_c}(\gl_{n|1}))\simeq \Lambda^{n|1}_{\aff}.
\end{align*}
\end{theorem}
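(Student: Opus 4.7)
I would prove parts (1) and (2) together via a sandwich argument. By Proposition \ref{affine ss polys in FF center}, the Molev--Ragoucy Segal--Sugawara vectors give the lower bound $\Lambda^{n|1}_\aff \subseteq \gr\cent(V^{\kk_c}(\gl_{n|1}))$, with graded images the supersymmetric power sums $s_p = u_1^p + \cdots + u_n^p - (-1)^p v_1^p$ and their derivatives. By Lemma \ref{FF center lies in W-center}, both centers embed into $\pi^0_\h$ via compatible Wakimoto realizations with $\iota$ preserving this structure, so taking associated graded yields the chain of injections
\begin{align*}
\gr\cent(V^{\kk_c}(\gl_{n|1})) \hookrightarrow \gr\cent(\W^{\kk_c}(\gl_{n|1})) \hookrightarrow \C[J_\infty \h].
\end{align*}

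The crucial step is to establish the matching upper bound $\gr\cent(\W^{\kk_c}(\gl_{n|1})) \subseteq \Lambda^{n|1}_\aff$ inside $\C[J_\infty \h]$. I would exploit the pseudo-differential operator from Theorem \ref{prop: coset for Wsalg}: its associated graded symbol at the critical level is
\begin{align*}
\overline{\mathcal{L}}_{\kk_c}(z) = (z-u_1)\cdots(z-u_n)(z+u_{n+1})^{-1},
\end{align*}
and the specialization $u_n = t$, $u_{n+1} = -t$ cancels the last two factors to yield $(z-u_1)\cdots(z-u_{n-1})$. Hence each $\overline{W}_p$ specializes to a polynomial in $u_1, \ldots, u_{n-1}$ only, which puts $\overline{W}_p \in \Lambda^{n|1}$ for every $p$. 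To conclude that all of $\gr\cent(\W^{\kk_c}(\gl_{n|1}))$ is supersymmetric, I would use Corollary \ref{center of reg sW alg} to reduce to the embedded $\cent(V^{\kk_c}(\gl_{1|1}))$-factor, whose generators by Theorem \ref{thm: pdiff operator} arise as coefficients of a pseudo-differential operator with inputs $J$ and $-J + \mathscr{W}_n$ inside $\W^{\kk_c}(\gl_{n|1})$; the same cancellation pattern applies there, with the $n=1$ base case supplied by Molev--Mukhin.

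Combining the three steps yields the sandwich
\begin{align*}
\Lambda^{n|1}_\aff \subseteq \gr\cent(V^{\kk_c}(\gl_{n|1})) \subseteq \gr\cent(\W^{\kk_c}(\gl_{n|1})) \subseteq \Lambda^{n|1}_\aff,
\end{align*}
forcing all inclusions to be equalities. This is (2). In turn $\gr\iota$ is bijective; since $\iota$ is an injective homomorphism of filtered vertex superalgebras, $\iota$ itself must be bijective, proving (1). The main obstacle is the last containment: verifying that every generator of $\cent(\W^{\kk_c}(\gl_{n|1}))$ has supersymmetric graded image, particularly those coming from the embedded $\gl_{1|1}$-factor whose arguments involve the degree-$n$ field $\mathscr{W}_n$ built iteratively from Cartan data. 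This is a delicate compatibility between the pseudo-differential operators for $\gl_{n|1}$ and for the embedded $\gl_{1|1}$ at the critical level, and establishing it is what makes the problem non-trivial beyond the $n=1,2$ cases already in the literature.
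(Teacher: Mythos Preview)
Your proposal is correct and follows the paper's argument essentially verbatim: the same sandwich $\Lambda^{n|1}_\aff \subseteq \gr\cent(V^{\kk_c}(\gl_{n|1})) \subseteq \gr\cent(\W^{\kk_c}(\gl_{n|1})) \subseteq \Lambda^{n|1}_\aff$, the same use of Corollary \ref{center of reg sW alg} to split the upper bound into the polynomial factor and the embedded $\cent(V^{\kk_c}(\gl_{1|1}))$, and the same pseudo-differential cancellation $(z-u_1)\cdots(z-u_n)(z+u_{n+1})^{-1}\big|_{u_n=t,\,u_{n+1}=-t}=(z-u_1)\cdots(z-u_{n-1})$ for the first factor. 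The only divergence is in the $\gl_{1|1}$-factor, where the paper does not route through Theorem \ref{thm: pdiff operator} but instead takes the Molev--Mukhin generators $\overline{h}_{p,p}=e_{1,1}^{p-1}(e_{1,1}+e_{2,2})+(p-1)e_{1,1}^{p-2}e_{2,1}e_{1,2}$, substitutes via the embedding \eqref{eq: subalg 2}, and uses the nilpotency of $\overline{\psi},\overline{\psi}^*$ to obtain the closed form $\overline{h}_{p,p}=(-1)^{p-1}v_1^{p-1}(u_1+v_1)\cdots(u_n+v_1)$, which vanishes at $u_n=t$, $v_1=-t$; this bypasses the compatibility check you flag as delicate, since the $\overline{h}_{p,p}$ are written in the abstract affine generators rather than the free-field variables of Theorem \ref{thm: pdiff operator}.
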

\proof
By Theorem \ref{thm: Wakimoto for sCoset} and Lemma \ref{FF center lies in W-center}, we have embeddings
\begin{align*}
    \cent(V^\kk(\gl_{n|1}))\subset \cent(\W^\kk(\gl_{n|1})) \subset \pi^0_\h.
\end{align*}
Since $\Lambda^{n|1}_{\aff}\subset \gr\cent(V^{\kk_c}(\gl_{n|1}))$ by Proposition \ref{affine ss polys in FF center}, it suffices to show the opposite inclusion
\begin{align}\label{opposite inclusion}
    \gr\cent(\W^{\kk_c}(\gl_{n|1}))\subset \Lambda^{n|1}_{\aff}.
\end{align}
By Theorem \ref{critical level structure of reg superW}, we have the isomorphism 
\begin{align}\label{isom in the proof}
    \gr \W^{\kk_c}(\gl_{n|1})\simeq \gr \C[J_\infty \mathbb{A}^{n-1}] \otimes \gr V^{\kk_c}(\gl_{1|1})
\end{align}
of Poisson vertex superalgebras.
By \cite{MM}, there is an isomorphism $\gr \cent(V^{\kk_c}(\gl_{1|1}))\simeq \Lambda^{1|1}_\aff$
and $\gr \cent(V^{\kk_c}(\gl_{1|1}))$ is generated by the elements
$$\overline{h}_{p,p}=e_{1,1}^{p-1}(e_{1,1}+e_{2,2})+(p-1)e_{1,1}^{p-2}e_{2,1}e_{1,2}$$
for $p\geq 1$. We apply the formula under the embedding $V^{\kk_c}(\gl_{1|1})\hookrightarrow \W^{\kk_c}(\gl_{n|1})$ in  \eqref{eq: subalg 2}, we obtain 
\begin{align*}
  \overline{h}_{p,p}
  &=J^{p-1}\overline{\mathscr{W}}_n+(p-1) \overline{J}^{p-2}\overline{G}_\tn \overline{G}_\tp\\
  &=(-\overline{u}_{n+1}+\overline{\psi} \overline{\psi}^*)^{p-1} \overline{\mathscr{W}}_n+(p-1)(-\overline{u}_{n+1}+\overline{\psi} \overline{\psi}^*)^{p-2}\overline{\mathscr{W}}_n \overline{\psi}^* \overline{\psi}\\
  &=(-1)^{p-1}\overline{u}_{n+1}^{p-1} \overline{\mathscr{W}}_n
\end{align*}
in $\gr \mathcal{A} \otimes \gr \pi^0_\h$.
As $\mathscr{W}_n=(-1)^n (\pd+H_1)\cdots (\pd+H_n)$, we have $\overline{\mathscr{W}}_n=\overline{H}_1\cdots \overline{H}_n$ and thus $\overline{h}_{p,p}$ is identified as 
$$\overline{h}_{p,p}=(-1)^{p-1}v_{1}^{p-1}(u_1+v_1)\cdots (u_n+v_1)$$
in $R^{n|1}_\infty$. 
Since $\overline{h}_{p,p}|_{u_n=t,v_1=-t}=0$, $\overline{h}_{p,p}$ lies in $\Lambda^{n|1}_\aff$.
On the other hand, the generators $\overline{W}_p$ ($p=1,\cdots,n-1$) of the first factor in \eqref{isom in the proof} are realized as coefficients
\begin{align*}
  (z-u_1)\cdots (z- u_n) (z+u_{n+1})^{-1}=z^{n-1}+\sum_{n=1}^\infty \overline{W}_p \partial^{n-1-p}
\end{align*}
with $z$ an indeterminate by \eqref{eq: gln1 ps diffop}. By substituting $u_n=t$ and $u_{n+1}=-t$, the right-hand side gets $ (z-u_1)\cdots (z- u_{n-1})$ and thus independent of $t$. Thus, so are the coefficients $\overline{W}_p$'s, i.e. they are in $\Lambda^{n|1}_{\aff}$ by definition. 
Hence, we obtain \eqref{opposite inclusion}. 
\endproof

As a consequence, we obtain the following main result.
\begin{theorem}\label{THM: FF center for gln1 vis pD op}
The center $\cent(V^{\kk_c}(\gl_{n|1}))$ is isomorphic to the subalgebra in $\pi^0_\h$
strongly generated by the coefficients $W_p$ $(p\geq1)$ of the the pseudo-differential operator $\mathcal{L}$:
\begin{align}\label{eq: pD op}
\mathcal{L}=(\partial- u_1)\cdots (\partial- u_n) (\partial+u_{n+1})^{-1} 
 =\partial^{n-1}+ \sum_{p=1}^\infty W_p \partial^{n-1-p}.
\end{align}
\end{theorem}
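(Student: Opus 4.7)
The plan is to reduce the strong-generation claim to a differential-algebraic question on Li's associated graded, combining the structural identifications already established. By Theorem \ref{thm: Wakimoto for sCoset}(2), $\cent(\W^{\kk_c}(\gl_{n|1})) = C^{\kk_c}(\gl_{n|1})$, and by Theorem \ref{them: Detecting the FF center}(1) this equals $\cent(V^{\kk_c}(\gl_{n|1}))$. At $\kk=\kk_c$ one has $k+\dC = 0$, so $\varepsilon = 1/(-1+k+\dC) = -1$ and $t_i = u_i$ via \eqref{eq: def of t's}; hence the pseudo-differential operator $\mathcal{L}_k$ of \eqref{eq: gln1 ps diffop} specializes precisely to $\mathcal{L}$ of \eqref{eq: pD op}, and Proposition \ref{prop: coset for Wsalg} places each coefficient $W_p$ inside $\cent(V^{\kk_c}(\gl_{n|1}))$.

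The remaining content is the strong-generation statement at the critical level, the subtle point flagged in the remark after Proposition \ref{prop: coset for Wsalg} that the generic-level argument does not cover. Since $\cent(V^{\kk_c}(\gl_{n|1}))$ is a commutative vertex algebra, strong generation coincides with generation as a differential algebra. I would pass to Li's associated graded, apply the isomorphism $\gr\cent(V^{\kk_c}(\gl_{n|1})) \simeq \Lambda^{n|1}_{\aff}$ from Theorem \ref{them: Detecting the FF center}(2), and reduce the claim to showing that the leading symbols $\{\overline{W}_p\}_{p \geq 1}$ differentially generate $\Lambda^{n|1}_{\aff}$.

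For this, I would exploit the logarithmic-derivative identity
\begin{align*}
\pd_z \log\!\left(\prod_{i=1}^n (z - u_i)\cdot (z + u_{n+1})^{-1}\right) \;=\; \sum_{p \geq 0} \frac{s_p}{z^{p+1}},
\end{align*}
where $s_p = u_1^p + \cdots + u_n^p - (-1)^p u_{n+1}^p$ is the $p$-th supersymmetric power sum and, by Macdonald, generates $\Lambda^{n|1}$ as a commutative algebra. Expanding $\pd_z \log\!\big(1 + \sum_{p\geq 1} \overline{W}_p z^{-p}\big)$ produces a triangular Newton-type identity expressing each $s_p$ polynomially in $\overline{W}_1, \dots, \overline{W}_p$ and their derivatives, and conversely. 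Hence the differential subalgebras generated respectively by the $s_p$ and by the $\overline{W}_p$ coincide, and both equal $\Lambda^{n|1}_{\aff}$.

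The main obstacle I expect is the final lifting step from the associated graded back to the vertex algebra. A standard filtered-graded induction on Li's filtration should apply—any element of the center can be reduced, modulo strictly lower filtered pieces, to a differential polynomial in the $W_p$—but one must carefully compare filtration degrees between $W_p \in C^{\kk_c}(\gl_{n|1}) \subset \pi^0_\h$ and its symbol $\overline{W}_p \in \Lambda^{n|1}_{\aff}$. Here the specific form of $\mathcal{L}$ at $\varepsilon = -1$ is important: the integer leading exponent $\pd^{n-1}$ (rather than a fractional power) ensures that each expansion coefficient $\overline{W}_p$ sits in a Li-filtration degree matching that of $W_p$, so the induction closes cleanly.
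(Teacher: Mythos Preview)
Your proposal is correct and follows the paper's overall strategy: identify the center with $C^{\kk_c}(\gl_{n|1})$ via Theorems~\ref{thm: Wakimoto for sCoset} and~\ref{them: Detecting the FF center}, place the $W_p$ inside it via Theorem~\ref{prop: coset for Wsalg}, then establish strong generation by passing to Li's associated graded and invoking $\gr\cent\simeq\Lambda^{n|1}_{\aff}$.

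The one genuine difference lies in how you verify that the symbols $\overline{W}_p$ generate $\Lambda^{n|1}_{\aff}$. The paper defers this to the proof of Theorem~\ref{them: Detecting the FF center}, which relies on the structural decomposition $\W^{\kk_c}(\gl_{n|1})\simeq\C[J_\infty\A^{n-1}]\otimes V^{\kk_c}(\gl_{1|1})$ of Theorem~\ref{critical level structure of reg superW} to exhibit the explicit generating set $\{\overline{W}_1,\ldots,\overline{W}_{n-1}\}\cup\{\overline{h}_{p,p}\}_{p\geq1}$; a short computation shows $\overline{W}_{n+p-1}=(-1)^n\overline{h}_{p,p}$, so this set coincides up to signs with $\{\overline{W}_p\}_{p\geq1}$. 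Your route via the logarithmic derivative and the resulting Newton-type identities is cleaner and more self-contained at this particular step, though of course it still rests on Theorem~\ref{them: Detecting the FF center}(2), whose proof uses that decomposition. Two minor remarks: the Newton relations between the $s_p$ and the $\overline{W}_p$ are purely algebraic in $\Lambda^{n|1}$ (no $\partial$'s enter), so the phrase ``and their derivatives'' is superfluous; and the lifting from $\gr$ to the center is the standard filtered-to-graded argument, valid irrespective of whether $\varepsilon$ is integral, so the concern in your final paragraph about the exponent $\partial^{n-1}$ is not actually needed.
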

\proof
By Theorem \ref{them: Detecting the FF center}, we have $\cent(V^{\kk_c}(\gl_{n|1}))\simeq \cent(\W^{\kk_c}(\gl_{n|1}))$ and that $\cent(\W^{\kk_c}(\gl_{n|1}))$ contains a subalgebra generated by $W_p$ ($p\geq1$) by Proposition \ref{prop: coset for Wsalg}. 
By taking the associated graded vector spaces $\gr \cent(V^{\kk_c}(\gl_{n|1}))\simeq \gr \cent(\W^{\kk_c}(\gl_{n|1}))$, we have shown in the proof of Theorem \ref{them: Detecting the FF center} that the images $\overline{W}_p$'s generate the affine supersymmertic polynomials $\Lambda^{n|1}_{\aff}$, and thus $\gr \cent(V^{\kk_c}(\gl_{n|1}))$ by Theorem \ref{them: Detecting the FF center} (2). 
Hence, $W_p$'s strongly generate the center $\cent(V^{\kk_c}(\gl_{n|1}))$. 
\endproof

Let us denote by $\mathrm{Fun}(\mathrm{Op}_{n|1}(D))$ the differential subalgebra of $\pi^0_\h$ generated by $W_p$ ($p\geq 1$), whose spectrum $\mathrm{Op}_{n|1}(D)$ paramtrize the pseudo-differential operators of the form \eqref{eq: pD op} over the local disc $D=\mathrm{Spec}\ \C(\!(z)\!)$.

Since the embedding in Theorem \ref{THM: FF center for gln1 vis pD op} is an analogue of the Harish-Chandra map 
$\iota_{\mathrm{HC}}\colon \cent(\gl_{n|1})\xrightarrow{\simeq} \Lambda^{n|1}\subset U(\h)$
in the finite setting (see e.g.\ \cite{Musson}), which one may recover by taking Zhu's functor, we call the embedding in Theorem \ref{THM: FF center for gln1 vis pD op} the affine Harish-Chandra map: 
\begin{align}\label{eq: affine HC map}
    \widehat{\iota}_{\mathrm{HC}}\colon \cent(V^{\kk_c}(\gl_{n|1})) \xrightarrow{\simeq} \mathrm{Fun}\left(\mathrm{Op}_{n|1}(D) \right)\subset \pi^0_\h.
\end{align}

\subsection{Poisson structure on the center}\label{sec: Poisson structure on the center}
Although the center $\cent(V^{\kk_c}(\gl_{n|1}))$ at the critical level in Theorem \ref{THM: FF center for gln1 vis pD op} is just a commutative vertex algebra (i.e.,\ differential algebra), it naturally inherits a Poisson vertex algebra structure through the affine Harish-Chandra map $\widehat{\iota}_{\mathrm{HC}}$. 
Note that $\pi^0_\h$ is strongly generated by $u_1,\cdots, u_{n+1}$ and inherited with a standard Poisson vertex algebra by the formula 
\begin{align}\label{eq: std Poisson structure}
\plm{u_i}{u_j}=\kappa_V(u_i,u_j)\lambda,
\end{align}
see e.g. \cite{FBZ04, Kac15} for details of Poisson vertex algebras.
\begin{proposition}\label{prop: Poisson str on the center}
The differential subalgebra $\mathrm{Fun}(\mathrm{Op}_{n|1}(D))$ is a Poisson vertex subalgebra of $\pi^0_\h$. This inherites a Poisson vertex algebra structure on $\cent(V^{\kk_c}(\gl_{n|1}))$ by the affine Harish-Chandra map.
\end{proposition}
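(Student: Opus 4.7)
The plan is to realize the Poisson vertex algebra structure on $\cent(V^{\kk_c}(\gl_{n|1}))$ as the first-order deformation of the Heisenberg coset family $C^\kk(\gl_{n|1})\hookrightarrow \pi^{k+\dC}_\tori$ around $\kk=\kk_c$, and then to identify the resulting bracket with the restriction of the standard Poisson bracket on $\pi^0_\h$.

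First I would invoke Theorems \ref{thm: Wakimoto for sCoset} and \ref{prop: coset for Wsalg} to lift the generators $W_p$ of $\mathrm{Fun}(\mathrm{Op}_{n|1}(D))$ to strong generators $W_p^{(\kk)}$ of the coset $C^\kk(\gl_{n|1}) \subset \pi^{k+\dC}_\tori$ for all $\kk \neq -\dC+1$, given as polynomial expressions in $t_1,\dots,t_{n+1}$ and $\varepsilon=1/(-1+k+\dC)$, with $W_p^{(\kk)}|_{\kk=\kk_c}=W_p$ and $t_i|_{\kk=\kk_c}=u_i$. Since $C^\kk(\gl_{n|1})$ is a vertex subalgebra for every such $\kk$, the bracket $\lm{W_p^{(\kk)}}{W_q^{(\kk)}}$ lies in $C^\kk(\gl_{n|1})[\lambda]$.

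Second I would observe that the OPEs \eqref{eq: OPE for ti's} in $\pi^{k+\dC}_\tori$ are all divisible by $(k+\dC)$; hence by the Leibniz rule so is every $\lambda$-bracket of elements of $\pi^{k+\dC}_\tori$. Dividing and passing to the limit then yields
$$\plm{W_p}{W_q}:=\lim_{\kk\to\kk_c}\frac{\lm{W_p^{(\kk)}}{W_q^{(\kk)}}}{k+\dC}\in C^{\kk_c}(\gl_{n|1})=\cent(V^{\kk_c}(\gl_{n|1})),$$
the identification on the right being Theorem \ref{thm: Wakimoto for sCoset}(2). That this formula satisfies the Poisson $\lambda$-bracket axioms follows by extracting the leading order in $\varepsilon_k:=k+\dC$ from the vertex algebra axioms for the family $C^\kk(\gl_{n|1})$.

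Finally I would verify that this deformation-induced bracket coincides with the restriction of the standard one on $\pi^0_\h$: on Heisenberg generators
$$\plm{u_i}{u_j}=\lim_{\kk\to\kk_c}\frac{\lm{t_i}{t_j}}{k+\dC}=\delta_{i,j}(-1)^{p(i)}\lambda=\kk_V(u_i,u_j)\lambda,$$
matching \eqref{eq: std Poisson structure}. Since both candidate brackets obey the same Leibniz rule, they agree on every differential polynomial in the $u_i$'s, in particular on $\mathrm{Fun}(\mathrm{Op}_{n|1}(D))$. Closure of $\mathrm{Fun}(\mathrm{Op}_{n|1}(D))$ under the standard Poisson bracket then follows because the right-hand side of the limit lies in $C^{\kk_c}(\gl_{n|1})$, and the inherited structure via $\widehat{\iota}_\mathrm{HC}$ coincides with the one defined above. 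The main point to be careful about is well-definedness of the first-order deformation bracket (independence of the lift), but this is automatic here: both $\pi^{k+\dC}_\tori$ and the distinguished lifts $W_p^{(\kk)}$ depend polynomially on $\varepsilon_k$, so the leading term is intrinsic to the specialization.
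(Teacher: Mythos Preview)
Your proposal is correct and follows essentially the same line as the paper's proof: obtain the Poisson structure as the first-order deformation of the family $C^\kk(\gl_{n|1})\hookrightarrow\pi^{k+\dC}_\tori$ at $\kk=\kk_c$ and identify it, via the Leibniz rule and \eqref{eq: OPE for ti's}, with the restriction of the standard bracket \eqref{eq: std Poisson structure} on $\pi^0_\h$. The paper merely packages the deformation more formally, working over the ring $\mathbf{O}^\wedge=\C[\![\mathbf{x}]\!]$ and the vertex subalgebra $\mathrm{Fun}(\mathrm{Op}^{\mathbf{O}^\wedge}_{n|1}(D))$ generated by the lifted $W_p$'s rather than via limits, and it makes explicit one point you leave implicit: it invokes Theorem~\ref{THM: FF center for gln1 vis pD op} to ensure that the critical level is not among the excluded points of the localization, so that this integral subalgebra specializes to $\mathrm{Fun}(\mathrm{Op}_{n|1}(D))$ and hence $(k+\dC)^{-1}\lm{W_p^{(\kk)}}{W_q^{(\kk)}}$ lands there upon setting $\mathbf{x}=0$. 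In your write-up that step is slightly circular---you assert that the limit lies in $C^{\kk_c}(\gl_{n|1})$ and then use this to deduce closure of $\mathrm{Fun}(\mathrm{Op}_{n|1}(D))$ under $\plm{\cdot}{\cdot}$, but these are the same assertion---so you should cite the strong generation at $\kk_c$ at that point, as the paper does.
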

Note that the affine Harish-Chandra map \eqref{eq: affine HC map} is the specialization of the description of the Heisenberg coset $C^\kk(\gl_{n|1})$ in Theorem \ref{prop: coset for Wsalg}.
Recall that $C^\kk(\gl_{n|1})$ is strongly generated at generic levels by the coefficients of the expansion of the pseudo-differential operator
\begin{align}\label{eq: PDO for deformed center}
    \mathcal{L}_k=(\partial+\varepsilon t_1)\cdots (\partial+\varepsilon t_n) (\partial+\varepsilon t_{n+1})^{\varepsilon}
\end{align}
with $\varepsilon=\frac{1}{-1+(k+\dC)}$, which we denote by $\mathrm{Fun}(\mathrm{Op}^\kk_{n|1}(D))$.
By definition of $t_i$'s in \eqref{eq: OPE for ti's}, we have the following diagram:
    \begin{center}
		\begin{tikzcd}
			C^\kk(\gl_{n|1})
			\arrow[r, "\simeq"] \arrow[d,dashed , "{\kk \rightarrow \kk_c}"]&
			\mathrm{Fun}(\mathrm{Op}^\kk_{n|1}(D)) \arrow[d,dashed , "{\kk \rightarrow \kk_c}"]
                \arrow[r,symbol=\subset]&
			\pi^{k+\dC}_{\tori} \arrow[d,dashed , "{\kk \rightarrow \kk_c}"]
			\\
		      \cent(V^{\kk_c}(\gl_{n|1})) \arrow[r, "\simeq"]&
			  \mathrm{Fun}(\mathrm{Op}_{n|1}(D)) \arrow[r,symbol=\subset]&
			 \pi^0_{\h},
		\end{tikzcd}
	\end{center}
which summarizes Theorem \ref{prop: coset for Wsalg}, \ref{them: Detecting the FF center} and \ref{THM: FF center for gln1 vis pD op}.

\proof[Proof of Proposition \ref{prop: Poisson str on the center}]
Note that the vertex algebra $\pi^{k+\dC}_{\tori}$ defined over $\C$ depending on $k$ has an integral form, namely, the vertex algebra $\pi^\mathbf{O}_{\tori}$ defined and free over the ring $\mathbf{O}=\C[\mathbf{x}]$ and generated by $\mathbf{t}_i$ ($i=1,\cdots,n+1$) satisfying the $\lambda$-brackets \eqref{eq: OPE for ti's} under the modification $t_i \mapsto \mathbf{t}_i$ and $(k+\dC)\mapsto \mathbf{x}$.
By using $\C_{k}=\mathbf{O}/(\mathbf{x}-(k+\dC))$ ($k\in \C$), we recover $\pi^{k+\dC}_{\tori}\simeq \pi^{\mathbf{O}}_{\tori}\otimes_{\mathbf{O}}\C_k$ as vertex algebras. In particular, the case of the critical level is recovered as $\pi^{0}_{\h}\simeq \pi^{\mathbf{O}}_{\tori}\otimes_{\mathbf{O}}\C_0$, which holds after the completion $\mathbf{O}^\wedge=\C[\![\mathbf{x}]\!]\supset \mathbf{O}$, i.e., $\pi^{0}_{\h}\simeq \pi^{\mathbf{O}^\wedge}_{\tori}\otimes_{\widehat{\mathbf{O}}}\C_0$. 
Since $\pi^{0}_{\h}$ is commutative, we have 
\begin{align*}
    \lm{\cdot}{\cdot}\colon \pi^{\mathbf{O}^\wedge}_{\tori}\times \pi^{\mathbf{O}^\wedge}_{\tori} \rightarrow x\cdot \pi^{\mathbf{O}^\wedge}_{\tori}
\end{align*}
Thus the composition 
\begin{align*}
    \pi^{\mathbf{O}^\wedge}_{\tori}\times \pi^{\mathbf{O}^\wedge}_{\tori}\overset{\lm{\cdot}{\cdot}}{\rightarrow }  \mathbf{x}\cdot \pi^{\mathbf{O}^\wedge}_{\tori}[\lambda] \overset{1/\mathbf{x}}{\rightarrow } \pi^{\mathbf{O}^\wedge}_{\tori}[\lambda] \overset{\cdot \otimes 1}{\rightarrow }(\pi^{\mathbf{O}^\wedge}_{\tori}\otimes_{\mathbf{O}^\wedge}\C_0)[\lambda]\simeq \pi^{0}_{\h}[\lambda]
\end{align*}
is well-defined and induces a bilinear map
\begin{align*}
   \plm{\cdot}{\cdot}\colon \pi^{0}_{\h} \times \pi^{0}_{\h} \rightarrow \pi^{0}_{\h}[\lambda],
\end{align*}
which gives a Poisson $\lambda$-bracket on $\pi^{0}_{\h}$ and indeed agrees with the one in \eqref{eq: std Poisson structure}. 
Together with the differential algebra structure on $\pi^{0}_{\h}$ similarly obtained, the specialization $\pi^{0}_{\h}\simeq \pi^{\mathbf{O}}_{\tori}\otimes_{\mathbf{O}}\C_0$ gives the standard Poisson vertex algebra structure on $\pi^0_\h$, see \cite{FBZ04} for details. 
Now, the Heisenberg coset subalgebra $C^\kk(\gl_{n|1})\subset \pi^{k+\dC}_\h$ is a family of vertex subalgebras depending on the levels $k+\dC\neq 1$ which admits the set of strong generators $W_p$ ($p\geq0$) at generic levels defined uniformly for all $k+\dC\neq 1$.
Hence, we may find at a localization $\mathbf{O}_{loc}=\C[\mathbf{x}][\frac{1}{\mathbf{x}-a}\mid a \in R]$ for some subset $R\subset \C$ and a vertex subalgebra 
$\mathrm{Fun}(\mathrm{Op}^{\mathbf{O}_{loc}}_{n|1}(D))\subset \pi^{\mathbf{O}}_\tori\otimes_\mathbf{O}\mathbf{O}_{loc}  $
generated by $W_p$ obtained as coefficients of \eqref{eq: PDO for deformed center} under the replacement $t_i \mapsto \mathbf{t}_i$ and $\varepsilon\mapsto 1/(-1+\mathbf{x})$.
Note that the critical level is avoided, i.e., $0 \notin R$, thanks to Theorem \ref{THM: FF center for gln1 vis pD op}. 
Hence, we may take the completion $\mathrm{Fun}(\mathrm{Op}^{\mathbf{O}^\wedge}_{n|1}(D))$ of $\mathrm{Fun}(\mathrm{Op}^{\mathbf{O}_{loc}}_{n|1}(D))$ and 
\begin{align*}
    \mathrm{Fun}(\mathrm{Op}_{n|1}(D)) \simeq \mathrm{Fun}(\mathrm{Op}^{\mathbf{O}^\wedge}_{n|1}(D))\otimes_{\widehat{\mathbf{O}}}\C_0.
\end{align*}
Thus, the vertex algebra strucutre on $\mathrm{Fun}(\mathrm{Op}^{\mathbf{O}^\wedge}_{n|1}(D))$ induces a Poisson vertex algebra structure on $\mathrm{Fun}(\mathrm{Op}_{n|1}(D))$ as a Poisson vertex subalgebra of the standard one on $\pi^0_\h$. This completes the proof.
\endproof

\subsection{Digression on the large level limit}
Here, we describe the Heisenberg coset $C^{\kk_c}(\sll_{n|1})$ (resp.\ $C^{\kk_c}(\gl_{n|1})$) in terms of the large level limit of the $\W$-algebra $\W^{\infty}(\sll_{n},\OO_\sub)$ (resp. $\W^{\infty}(\gl_{n},\OO_\sub)$) identified with the regular function of the jet scheme of the Slodowy slice $\mathscr{S}_{\sub}$ associated with the nilpotent element $f$ in \eqref{eq: subregular nilp}. 

To formulate the large level limit of $\W^{\infty}(\gl_{n},\OO_\sub)$, we use the Wakimoto realization $V^{\check{\kk}}(\gl_n)$ which admits an integral form, allowing the (Poisson vertex algebra) limit $\check{\kk}\rightarrow \infty$, following \cite{FF2, F}. The Wakimoto realization for $V^{\check{\kk}}(\gl_n)$ is an embedding of vertex algebras
\begin{align}\label{eq: affine Wakimoto for non-super}
\Psi_{\mathrm{aff}}\colon V^{\check{\kk}}(\gl_n) \hookrightarrow \sA_{\Delta^+}\otimes \pi^{\check{\kk}+\check{\dC}}_{\check{\h}},
\end{align}
where $\sA_{\Delta^+}$ is the tensor product of the $\beta\gamma$-systems  $\sA$ indexed by the positive roots $\Delta_+$.
Let us introduce the ring $\dso=\C[\frac{1}{\dx}]$, where $\dx$ is a formal variable corresponding to $\check{\kk}+\dC$ after the evaluation $\dso \rightarrow \C_{\check{\kk}}$,  $\frac{1}{\dx}\mapsto \frac{1}{\check{\kk}+\check{\dC}}$. Let us introduce the following vertex algebras over $\dso$
\begin{align}\label{eq: some vertex algebras over ring}
    V^\dso(\gl_n),\quad \sA_{\Delta^+}^\dso,\quad \pi^{\dso}_{\check{\h}},
\end{align}
which are free $\dso$-algebras strongly generated by 
\begin{align*}
    \w{u}=\frac{1}{\dx}u,\quad, \w{a}_\alpha= \frac{1}{\dx}a_\alpha,\ \w{a}_\alpha^*=a_\alpha^*,\quad \w{h}_i=\frac{1}{\dx}h_i
\end{align*}
for $u \in \sll_n$, $\alpha \in \Delta_+$, and $1\leq i \leq n-1$. By definition, they satisfy the $\lambda$-brackets
\begin{align*}
    &\lm{\w{u}}{\w{v}}=\frac{1}{\dx}\left(\widehat{[u,v]}+ \frac{\dx-\dC}{\dx}\kappa_V(u,v)\lambda\right),\\
    &\lm{\w{a}_\alpha}{\w{a}_\beta^*}=\frac{1}{\dx}\delta_{\alpha,\beta},\quad
    \lm{\w{h}_i}{\w{h}_j}=\frac{1}{\dx}\kappa_V(h_i,h_j)\lambda.
\end{align*}
Hence, we may take the evaluation $\check{\kk}=\infty$ as $\frac{1}{\dx}=0$, which we denote by $\dso \rightarrow \C_{\infty}$. 
Likewise, let us denote by $V^\infty(\gl_n), \sA_{\Delta^+}^\infty, \pi^{\infty}_{\check{\h}}$ the evaluations of those in \eqref{eq: some vertex algebras over ring}. Note that these differential algebras in the limits have a natural Poisson vertex algebra by a similar formula in Section \ref{sec: Poisson structure on the center}.
Then, the embedding in \eqref{eq: affine Wakimoto for non-super} is lifted to an embedding
\begin{align*}
    \Psi_{\mathrm{aff}}^{\dso}\colon V^\dso(\gl_n)\rightarrow \sA_{\Delta^+}^\dso \otimes_{\dso}\pi^{\dso}_{\check{\h}}
\end{align*}
by \cite{FF2} which recovers \eqref{eq: affine Wakimoto for non-super} by evaluations $-\otimes_{\dso}\C_{\check{\kk}}$. 
By applying the BRST reduction $\HH_{\OO_{\mathrm{sr}}}^0$, we obtain an embedding of vertex algebras 
\begin{align}\label{eq: Wakimoto intergal form}
    [\Psi_{\mathrm{aff}}^{\dso}]\colon \W^\dso(\gl_n,\OO_{\mathrm{sr}})\rightarrow \sA^\dso \otimes_{\dso}\pi^{\dso}_{\check{\h}},
\end{align}
whose evaluations $-\otimes_{\dso}\C_{\check{\kk}}$ recovers the Wakimoto realization of the $\W$-algebra $\W^{\check{\kk}}(\gl_n,\OO_{\mathrm{sr}})$ in Proposition \ref{Wakimoto for subreg 2}, see \cite{G20}. 
Let us introduce the large level limit 
$$\W^{\infty}(\gl_n,\OO_{\mathrm{sr}})=\W^{\check{\kk}}(\gl_n,\OO_{\mathrm{sr}})\otimes_{\dso}\C_{\infty}.$$
Since $\W^{\check{\kk}}(\gl_n,\OO_{\mathrm{sr}})$ contains the Heisenberg field $\check{\mathbf{H}}=\frac{1}{\dx}(aa^*+\check{\varpi}_{n-1})$ (see \S \ref{sec: Subregular Walg}), which satisfies the $\lambda$-bracket $\lm{\check{\mathbf{H}}}{\check{\mathbf{H}}}=\frac{1}{\dx^2}(1-\frac{n-1}{n}\dx)\lambda$, we have the complete reducibility of weight modules over the (nondegenerate) Heisenberg vertex algebras
\begin{align}\label{eq: integral form and branching rule}
    \W^{\dso_{loc}}(\gl_n,\OO_{\mathrm{sr}}) \simeq \bigoplus_{n \in \Z} C^{\dso_{loc}}_n(\gl_n,\OO_{\mathrm{sr}})\otimes_{\dso_{loc}} \pi^{\dso_{loc}}_{\check{\mathbf{H}},n}
\end{align}
after the localization $-\otimes_\dso \dso_{loc} $ where $\dso_{loc}=\dso[(\frac{1}{\dx}-\frac{n-1}{n})^{-1}]$. Here, $\pi^{\dso_{loc}}_{\check{\mathbf{H}},n}$ is the Fock module over $\pi^{\dso_{loc}}_{\check{\mathbf{H}}}$ of highest weight $n/\dx$, and $C^{\dso_{loc}}_n(\gl_n,\OO_{\mathrm{sr}})$ is the free $\dso_{loc}$-submodule of $\W^{\dso_{loc}}(\gl_n,\OO_{\mathrm{sr}})$ consisting of highest weight vector for $\pi^{\dso_{loc}}_{\check{\mathbf{H}}}$ of weight $n/\dx$. 
Note that $C^{\dso_{loc}}_0(\gl_n,\OO_{\mathrm{sr}})$ is a vertex subalgebra and satisfies 
\begin{align*}
    C^{\dso_{loc}}_0(\gl_n,\OO_{\mathrm{sr}})\otimes_{\dso_{loc}}\C_{\check{\kk}}&\simeq C^{\check{\kk}}(\gl_n,\OO_{\mathrm{sr}})
\end{align*}
as vertex algebras, where the right-hand side denotes the Heisenberg coset subalgebra
$$C^{\check{\kk}}(\gl_n,\OO_{\mathrm{sr}})=\Com (\pi_{\check{H}},\W^{\check{\kk}}(\gl_n,\OO_{\mathrm{sr}})).$$
Let us introduce the large level limit as 
\begin{align*}
    C^{\infty}(\gl_n,\OO_{\mathrm{sr}})=C^{\dso_{loc}}_0(\gl_n,\OO_{\mathrm{sr}})\otimes_{\dso_{loc}}\C_{\infty},
\end{align*}
which is regarded as a multiplicity space since 
\begin{align}\label{eq: branching rule at infinity}
    \W^{\infty}(\gl_n,\OO_{\mathrm{sr}}) \simeq \bigoplus_{n \in \Z} C^{\infty}_n(\gl_n,\OO_{\mathrm{sr}})\otimes_{\C} \pi^{\infty}_{\check{\mathbf{H}},n}
\end{align}
by applying $-\otimes_{\dso_{loc}}\C_{\check{\kk}}$ to \eqref{eq: integral form and branching rule}. 
\begin{remark}
\textup{
We have natural isomorphisms of Poisson vertex algebras
$$\W^{\infty}(\sll_n,\OO_{\mathrm{sr}})\simeq \HH_{\OO_{\mathrm{sr}}}^0(V^\infty(\sll_n))\simeq \C[J_\infty \mathscr{S}_{\sub}],$$
where  $J_\infty \mathscr{S}_{\sub}$ is the jet scheme of the Slodowy slice $\mathscr{S}_{\sub}$ associated with the subregular nilpotent element $f$, see \cite{Ar15}.
The slice $\mathscr{S}_{\sub}$ admits a moment map $\mu \colon  \mathscr{S}_{\sub} \rightarrow \gl_1^*$ which induces a morphism on the jet scheme 
$\mu_\infty\colon J_\infty\mathscr{S}_{\sub}\rightarrow J_\infty \gl_1^*$.
This corresponds to the embedding $\pi^{\infty}_{\check{\mathbf{H}}}\hookrightarrow \W^{\infty}(\sll_n,\OO_{\mathrm{sr}})$. Hence, \eqref{eq: branching rule at infinity} implies an isomorphism 
\begin{align*}
    C^{\infty}(\gl_n,\OO_{\mathrm{sr}})\simeq \C[\mu_{\infty}^{-1}(0)/\!/\C^\times].
\end{align*}
}
\end{remark}

The following was conjectured by two of the authors in \cite{AN-2024}.
\begin{proposition} For $n>1$, there is an isomorphism
    \begin{align*}
        C^{\infty}(\gl_{n},\OO_{\mathrm{sr}}) \simeq C^{\kk_c}(\gl_{n|1})\ (\simeq \cent(V^{\kk_c}(\gl_{n|1}))).
    \end{align*}
\end{proposition}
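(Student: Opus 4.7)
The plan is to deduce the claim from the Feigin-Semikhatov duality (Theorem \ref{FS duality}) between the Heisenberg coset subalgebras at dual generic levels, combined with the integral form machinery developed earlier in the paper. Under the duality relation $(k+\dC)(\check{k}+\check{\dC})=1$, the critical level $\kk = \kk_c$ on the super side corresponds formally to $\check{\kk} = \infty$ on the non-super side, which motivates the comparison.

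First I would apply $\Com(\pi_J, -)$ to the FS isomorphism
$$\W^\kk(\sll_{n|1}) \simeq \Com(\pi_{H_\Delta}, \W^{\check{\kk}}(\sll_n,\OO_\sub) \otimes V_\Z),$$
obtaining $C^\kk(\sll_{n|1})$ on the super side. On the non-super side, using the identification $\sigma$ of Lemma \ref{FS duality for FF}, the Heisenberg field $J = x + \varpi_n$ corresponds to a specific Heisenberg combination commuting with $\pi_{H_\Delta}$, and taking the iterated coset collapses the lattice/Heisenberg degrees of freedom in $V_\Z$ together with the Heisenberg field $\check{H}$ inside $\W^{\check{\kk}}(\sll_n, \OO_\sub)$. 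After absorbing the $\pi_\Omega$ factors that distinguish $\gl$ from $\sll$, one obtains at generic dual levels an isomorphism
$$C^\kk(\gl_{n|1}) \simeq C^{\check{\kk}}(\gl_n, \OO_\sub).$$

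Next I would extend this to the critical/infinite specialization using integral forms. On the non-super side, Equation \eqref{eq: Wakimoto intergal form} gives an integral form of $\W^{\check{\kk}}(\gl_n, \OO_\sub)$ over $\dso = \C[1/\dx]$ that specializes at $1/\dx = 0$ to $\W^\infty(\gl_n, \OO_\sub)$, and the branching \eqref{eq: integral form and branching rule} realizes $C^\infty(\gl_n, \OO_\sub)$ as the multiplicity space in the limit. On the super side, Theorem \ref{prop: coset for Wsalg} together with the integral form $\pi^\mathbf{O}_\tori$ constructed inside the proof of Proposition \ref{prop: Poisson str on the center} produces uniform strong generators $W_p$ for $C^\kk(\gl_{n|1})$ defined for all levels, including $\kk = \kk_c$. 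The reparametrization $\mathbf{x} = k + \dC \leftrightarrow 1/\dx = 1/(\check{k}+\check{\dC})$ coming from the duality relation should match the two integral forms after a suitable rescaling of generators, and specializing at $\mathbf{x} = 0$ (equivalently $1/\dx = 0$) yields the claim.

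The main obstacle is ensuring that the generic-level isomorphism of the first step lifts to a compatible isomorphism of integral forms, so that the specialization is well-defined and nonsingular at both endpoints. In practice this reduces to matching two explicit sets of generators: on the super side, the coefficients of $\mathcal{L}_k$ from Theorem \ref{prop: coset for Wsalg}, uniformly defined in $\mathbf{x}$; on the non-super side, the Genra-Kaneko generators of $\W^{\check{\kk}}(\sll_n, \OO_\sub)$ in the Wakimoto realization of Proposition \ref{Wakimoto for subreg 2}, taken in the large level limit. One must track how the rescaling $\w{u} = u/\dx$, $\w{a}_\alpha = a_\alpha/\dx$ required to obtain the integral form on the non-super side interacts with the identification $\sigma$, and verify that the resulting map extends across $1/\dx = 0$. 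An alternative, more direct route would be to compute the Poisson vertex algebra $C^\infty(\gl_n, \OO_\sub)$ explicitly via functions on the jet scheme of $\mu^{-1}(0)/\!/\C^\times$ and compare with $\mathrm{Fun}(\mathrm{Op}_{n|1}(D))$ from the affine Harish-Chandra map.
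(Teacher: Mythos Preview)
Your overall strategy matches the paper's: apply the Heisenberg coset $\Com(\pi_J,-)$ to the Feigin--Semikhatov duality of Theorem \ref{FS duality}, lift the resulting isomorphism to integral forms over $\dO\simeq\dso$, and then specialize at $\kk=\kk_c$ (equivalently $\check{\kk}=\infty$). The paper also first reduces from $\gl$ to $\sll$ via the $\pi_\Omega$ factor, as you do.

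The main difference is in \emph{where} the integral form is lifted. You propose to match the two coset subalgebras directly by tracking explicit strong generators (the $W_p$ on the super side, the Genra--Kuwabara generators on the non-super side) through the rescaling and checking regularity at the endpoint. The paper avoids this: it works entirely at the level of the \emph{ambient Heisenberg vertex algebras}. After taking $\Com(\pi_J,-)$ in Theorem \ref{FS duality}, one obtains a commutative square with $C^{\kk}(\sll_{n|1})\subset \pi_{\widetilde{\alpha}_1,\dots,\widetilde{\alpha}_n}$ on one side and $C^{\check{\kk}}(\sll_n,\OO_{\mathrm{sr}})\subset \pi_{\widetilde{\beta}_1,\dots,\widetilde{\beta}_n}$ on the other, where $\widetilde{\alpha}_i,\widetilde{\beta}_i$ are explicit Heisenberg fields and $\sigma$ sends $\widetilde{\alpha}_i\mapsto -\tfrac{1}{\check{k}+\check{\dC}}\widetilde{\beta}_i$. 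Since these ambient algebras are just free differential polynomial rings, their integral forms over $\dO$ and $\dso$ are trivial, and the formula for $\sigma$ is visibly defined over the ring after the identification $\mathbf{x}\leftrightarrow 1/\dx$. Specializing the lifted $\sigma^{\dO}$ at $\mathbf{x}=0$ then gives the isomorphism of ambient algebras at the limit, and the coset subalgebras come along because they are defined as subalgebras inside these free-field realizations (the paper invokes the localized $\beta\gamma$-system $\sA^{\times\dso}$ to see that $C^{\infty}(\sll_n,\OO_{\mathrm{sr}})$ indeed sits inside $\pi^\infty_{\beta_1,\dots,\beta_n}$).

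Your route would also work, but it is more laborious: you would need to verify by hand that the generator-matching has no poles at $1/\dx=0$, whereas the paper's ambient-Heisenberg argument makes this automatic. Your alternative suggestion of computing $C^\infty(\gl_n,\OO_{\mathrm{sr}})$ via $\C[\mu_\infty^{-1}(0)/\!/\C^\times]$ is the content of the Remark preceding the Proposition, but the paper does not use it in the proof.
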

\proof
Since both sides admit the decompositions induced by $\gl_{n|m}=\sll_{n|m}\oplus \C \Omega$, it suffices to show $C^{\infty}(\sll_{n},\OO_{\mathrm{sr}}) \simeq C^{\kk_c}(\sll_{n|1})$. By applying the Heisenberg coset $\Com(\pi_J,-)$ in Theorem \ref{FS duality}, we obtain isomorphisms of vertex algebras
    \begin{center}
\begin{tikzcd}[ column sep = huge]
			\pi_{\widetilde{\alpha}_1,\cdots,\widetilde{\alpha}_n}
			\arrow[r, " \overset{\sigma}{\simeq}"]&
			\pi_{\widetilde{\beta}_1,\cdots,\widetilde{\beta}_n}\\
		      C^{\kk}(\sll_{n|1})
			\arrow[r, " \simeq "] \arrow[u,symbol=\subset, " \Psi\ "]&
			 C^{\check{\kk}}(\sll_{n},\OO_{\mathrm{sr}}) \arrow[u,symbol=\subset, " \check{\Psi}\ "].
\end{tikzcd}
\end{center}
Here, we have used isomorphisms 
\begin{align*}
    \Com(\pi_J,\pi^{\kk+\dC}_{\h}\otimes V_\Z)\simeq \pi_{\widetilde{\alpha}_1,\cdots,\widetilde{\alpha}_n},\quad \Com (\pi_{H_\Delta}, (\pi^{\check{\kk}+\check{\dC}}_{\check{\h}} \otimes \mathcal{A}^\times) \otimes V_\Z)\simeq \pi_{\widetilde{\beta}_1,\cdots,\widetilde{\beta}_n}
\end{align*}
by setting 
\begin{align*}
\widetilde{\alpha}_i=\begin{cases}{h_i}  & (i\neq n )\\ h_n-(\kk+\dC)x &(i=n),\end{cases} \quad 
\widetilde{\beta}_i=\begin{cases} \check{h}_i  & (i\neq n-1,n )\\ \check{h}_n-(\check{\kk}+\check{\dC})(x_1+y_1) &(i=n-1)\\ x_1 & (i=n).\end{cases} 
\end{align*}
By Lemma \ref{FS duality for FF}, the isomorphism $\sigma$ sends 
\begin{align}\label{eq: identification of Heisenberg}
    \widetilde{\alpha}_i \mapsto -\frac{1}{\check{\kk}+\check{\dC}}\widetilde{\beta}_i,\quad (i=1,\cdots, n).
\end{align}
Let $\pi_{\widetilde{\alpha}_1,\cdots,\widetilde{\alpha}_n}^{\dO}$ and $\pi_{\widetilde{\beta}_1,\cdots,\widetilde{\beta}_n}^\dso$ denote the Heisenberg vertex algebras over 
$$\dO\xrightarrow{\simeq} \dso,\quad \x\mapsto \frac{1}{\dx}$$
generated by $\widetilde{\alpha}_i$ ($i=1,\cdots,n$) and $\frac{1}{\dx}\widetilde{\beta}_i$ ($i=1,\cdots,n$), respectively Then the isomorphism $\sigma$ is lifted to 
\begin{align*}
    \sigma^{\dO}\colon  \pi_{\widetilde{\alpha}_1,\cdots,\widetilde{\alpha}_n}^{\dO} \xrightarrow{\simeq} \pi_{\widetilde{\beta}_1,\cdots,\widetilde{\beta}_n}^\dso
\end{align*}
whose evaluation $-\otimes_{\dO}\C_{\kk}$, equivalently $-\otimes_{\dso}\C_{\check{\kk}}$, recovers the isomorphism $\sigma$. 
Now, by applying the evaluation $-\otimes_{\dO}\C_{0}$, equivalently $-\otimes_{\dso}\C_{\infty}$, we obtain 
    \begin{center}
\begin{tikzcd}[ column sep = huge]
			\pi^0_{\h}
			\arrow[r, " \overset{\sigma}{\simeq}"]&
			\pi^\infty_{\beta_1,\cdots,\beta_n}\\
		      C^{\kk_c}(\sll_{n|1})
			\arrow[r, " \simeq "] \arrow[u,symbol=\subset, " \Psi\ "]&
			 C^{\infty}(\sll_{n},\OO_{\mathrm{sr}}) \arrow[u,symbol=\subset, " \check{\Psi}\ "].
\end{tikzcd}
\end{center}
Here, $\pi^\infty_{\beta_1,\cdots,\beta_n}$ is the differential algebra generated 
the evaluations $\beta_i$ of $\widetilde{\beta}_i$ at $\check{\kk}=\infty$, where $C^{\infty}(\sll_{n},\OO_{\mathrm{sr}})$ naturally embeds as $\pi^\infty_{\beta_1,\cdots,\beta_n}$ is a subalgebra of $(\sA^\dso \otimes_{\dso}\pi^{\dso}_{\check{\h}})\otimes_\dso \C_\infty$ in \eqref{eq: Wakimoto intergal form} through the localization
\begin{align*}
    \sA^\dso \hookrightarrow \sA^{\times \dso}
\end{align*}
where $\sA^{\times \dso}$ is generated by $(\frac{1}{\dx}a)^{\pm1}$ and $a^*$ over $\dso$.
This completes the proof.
\endproof

\section{Character formula and plane partitions}\label{sec: Character formula}
In this section, we derive the character formula of the center $\cent(V^{\kk_c}(\gl_{n|1}))$.
Let us introduce the Pochhammer symbol
\begin{align*}
    (z;q)_n=(1-z)(1-zq)\cdots(1-zq^{n-1})
\end{align*}
for $n=0,1,\cdots,\infty$ and the multi-variable one 
\begin{align*}
    (z_1,\cdots,z_m;q)_n=(z_1;q)_n\cdots (z_m;q)_n.
\end{align*}

\begin{theorem}\label{thm: character formula of the center}
For all $n\geq1$, the following formula holds:
\begin{align*}
    \ch[\cent(V^{\kk_c}(\gl_{n|1}))](q)=\frac{\sum_{m=0}^\infty (-1)^mq^{\frac{1}{2}m(m+1)}(q^{m+1};q)_{n-1}}{(q;q)_\infty^2(q,\cdots,q^{n-1};q)_\infty}.
\end{align*}
\end{theorem}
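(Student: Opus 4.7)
The plan is to combine the isomorphism
\[
\cent(V^{\kk_c}(\gl_{n|1})) \;\simeq\; \C[J_\infty \mathbb{A}^{n-1}] \otimes \cent(V^{\kk_c}(\gl_{1|1}))
\]
of vertex algebras, obtained by chaining Theorem~\ref{them: Detecting the FF center} with Corollary~\ref{center of reg sW alg}, with the $n=1$ character formula of Molev--Mukhin \cite{MM} and Adamovi\'c \cite{A}. The subtlety is that this isomorphism does not preserve the standard conformal gradings: under the embedding $V^{\kk_c}(\gl_{1|1}) \hookrightarrow \W^{\kk_c}(\gl_{n|1})$ from Theorem~\ref{critical level structure of reg superW}, the generators $J = e_{1,1}$ and $e_{1,2}$ keep their weight $1$, while $K = e_{1,1}+e_{2,2}$ and $e_{2,1}$ (both of standard weight $1$ inside $V^{\kk_c}(\gl_{1|1})$) map to $\mathscr{W}_n$ and $G^-$ of weight $n$ inside $\W^{\kk_c}(\gl_{n|1})$. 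Thus the first tensor factor $\C[J_\infty \mathbb{A}^{n-1}]$ inherits its natural grading with $W_1, \dots, W_{n-1}$ of weights $1, 2, \dots, n-1$ --- contributing the factor $\prod_{i=1}^{n-1}(q^i;q)_\infty^{-1}$ --- while the second factor acquires a shifted grading.

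To track the shift, I would introduce a bigrading $(w, s)$ on $V^{\kk_c}(\gl_{1|1})$, where $w$ is the standard conformal weight and $s$ counts the total number of $K$-modes and $e_{2,1}$-modes in a PBW monomial over the generators $J, K, e_{1,2}, e_{2,1}$. A direct inspection of the $\lambda$-brackets of $\gl_{1|1}$ at the critical level --- $K$ is central, $[J{}_\lambda G^\pm] = \pm G^\pm$, and $[e_{1,2}{}_\lambda e_{2,1}] = K$ with matching shift $s=1$ on both sides --- shows the bigrading is preserved by all OPEs, so $\cent(V^{\kk_c}(\gl_{1|1}))$ is a bigraded vertex subalgebra. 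The effective conformal weight under the embedding becomes $w + (n-1)s$, which amounts to specializing the shift-tracking variable to $z = q^{n-1}$. The technical heart of the proof is then to establish the bigraded character formula
\[
\ch(\cent(V^{\kk_c}(\gl_{1|1})))(q,z) \;=\; \frac{1}{(q;q)_\infty (zq;q)_\infty} \sum_{m \geq 0} (-1)^m q^{m(m+1)/2} \frac{(zq;q)_m}{(q;q)_m},
\]
which at $z=1$ recovers the formula of \cite{MM, A}. Two natural routes present themselves: one could refine the alternating-sum derivation of Molev--Mukhin so the shift parameter propagates through each step; alternatively, one could exploit the Wakimoto realization of Theorem~\ref{thm: pdiff operator}, under which $\cent(V^{\kk_c}(\gl_{1|1}))$ embeds as the subalgebra of $\pi^0_\h$ strongly generated by the coefficients of $(\partial - u_1)(\partial + u_2)^{-1}$, whose bigradings are directly visible on the explicit pseudo-differential expansion.

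Combining the two factors, specializing $z = q^{n-1}$, and applying the elementary $q$-Pochhammer identity $(q;q)_{n-1} (q^n;q)_m / (q;q)_m = (q^{m+1};q)_{n-1}$ then brings the expression into the stated form. The main obstacle is establishing the bigraded character formula for $\cent(V^{\kk_c}(\gl_{1|1}))$: while \cite{MM} obtain the $z=1$ specialization via Jacobi-type manipulations, lifting their argument to the $(q,z)$-refinement requires a careful re-derivation that threads the shift parameter through the combinatorial identity.
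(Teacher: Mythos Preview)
Your strategy is coherent and genuinely different from the paper's. The paper does \emph{not} compute the character by passing through the factorization of Corollary~\ref{center of reg sW alg}; instead it exploits the fact that the Heisenberg decomposition~\eqref{eq: Heisenberg decomposition} makes $\ch[C^{\kk_c}(\gl_{n|1})]=\ch[C^{\kk}(\gl_{n|1})]$ for generic~$\kk$, and at generic level the coset is read off from the Feigin--Semikhatov dual $\W^{\check{\kk}}(\gl_n,\OO_{\mathrm{sr}})$ of Theorem~\ref{FS duality}. The known PBW character of that $\W$-algebra is then reduced to the stated form by a residue and Andrews' expansion of $1/(zq,z^{-1}q;q)_\infty$ in unary false theta functions. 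No bigrading or reduction to $n=1$ enters. What the paper's route buys is that the only nontrivial $q$-series input is a single classical identity of Andrews, applied uniformly in~$n$; what your route would buy, if completed, is a clean conceptual explanation of why the answer for general~$n$ is a one-parameter specialization of the $n=1$ answer.

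The gap you flag is real, and your second suggested route does not work as stated: the $s$-grading does not extend to a grading on $\pi^0_\h$ by assigning degrees to $u_1,u_2$. Indeed at $\kk=\kk_c$ one has $W_p=(-1)^p(u_1+u_2)(\partial+u_2)^{p-1}\wun$, and for all $W_p$ to have $s=1$ one would need $u_1+u_2$ homogeneous of degree~$1$ while $u_2$ has degree~$0$, which is inconsistent. What \emph{does} work is to introduce a third variable~$y$ for the $J_0$-eigenvalue: the trigraded PBW character of $V^{\kk_c}(\gl_{1|1})$ is $\frac{(-yq;q)_\infty(-y^{-1}zq;q)_\infty}{(q;q)_\infty(zq;q)_\infty}$, and extracting $[y^0]$ via the $q$-exponential expansion $(-x;q)_\infty=\sum_{a\geq0}q^{a(a-1)/2}x^a/(q;q)_a$ and dividing by $\ch[\pi_J]$ gives directly
\[
\ch[\cent(V^{\kk_c}(\gl_{1|1}))](q,z)=\frac{1}{(zq;q)_\infty}\sum_{a\geq0}\frac{z^a q^{a(a+1)}}{(q;q)_a^2}.
\]
This is a rigorous derivation of the bigraded character, but it is not visibly equal to the form you wrote down, and after specializing $z=q^{n-1}$ and multiplying by $\prod_{i=1}^{n-1}(q^i;q)_\infty^{-1}$ you are still left with a nontrivial $q$-hypergeometric identity to reach the paper's numerator $\sum_m(-1)^mq^{m(m+1)/2}(q^{m+1};q)_{n-1}$. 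Proving that identity is comparable in effort to the false-theta manipulation the paper performs, so your route does not obviously shorten the argument.
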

\begin{proof}
By Theorem \ref{them: Detecting the FF center} (1) and Theorem \ref{thm: Wakimoto for sCoset} (2), there is an isomorphism of quasi-conformal (i.e., graded) vertex algebras 
\begin{align*}
    \cent(V^{\kk_c}(\gl_{n|1}))\simeq C^{\kk_c}(\gl_{n|1}).
\end{align*}
By the decomposition \eqref{eq: Heisenberg decomposition}, $C^{\kk_c}(\gl_{n|1})\simeq C^{\kk}(\gl_{n|1})$ as graded vector spaces for generic levels $\kk$, and thus 
\begin{align*}
    \ch[\cent(V^{\kk_c}(\gl_{n|1}))](q)= \ch[C^{\kk_c}(\gl_{n|1})](q)
\end{align*}
For convenience, we compute the character on the right-hand side via the duality Theorem \ref{FS duality}. Since 
$$\ch[\W(\gl_{n},\OO_{\mathrm{sr}})](q)=\frac{1}{(q,q,\cdots,q^{n-1},zq,z^{-1}q^{n-1};q)_\infty},$$
we have 
\begin{align}
    \nonumber\ch[C^{\kk_c}(\gl_{n|1})](q)
    \nonumber&=\res{\frac{1}{(q,\cdots,q^{n-1},zq,z^{-1}q^{n-1};q)_\infty}}\\
    \label{eq: first manipulation}&=\frac{1}{(q,\cdots,q^{n-1};q)_\infty}\res{\frac{1}{(zq,z^{-1}q^{n-1};q)_\infty}}, 
\end{align}
where $\underset{z=0}{\mathrm{Res}}f(z)\frac{dz}{z}=f_0$ denotes the residue of the series $f(z)=\sum_n f_n z^n$. We note that the formula \eqref{eq: first manipulation} for $n=1$ holds by using the $\beta\gamma$-system instead of $\W(\gl_{n},\OO_{\mathrm{sr}})$, thanks to \cite{CL4}. 
It follows from the expansion
\begin{align*}
    \frac{1}{(zq,z^{-1}q;q)_\infty}=\frac{1}{(q;q)_\infty^2}\sum_{s\in \Z}\left(\Phi_s(q)-\Phi_{s-1}(q)\right)z^s
\end{align*}
by the unary false theta functions $\Phi_s(q)=\sum_{m=0}^\infty (-1)^mq^{\frac{1}{2}m(m+1)}q^{ms}$ \cite{Andrews84} and the expansion of the symmetric functions 
\begin{align}\label{sq: symmetric poly}
(1+x_1w)\cdots (1+x_Nw)=\sum_{a=0}^{N}e_a(x_1,\cdots,x_N)w^a,    
\end{align}
we obtain 
\begin{align*}
    &\res{\frac{1}{(zq,z^{-1}q^{n-1};q)_\infty}}
    =\res{\frac{(z^{-1}q;q)_{n-2}}{(zq,z^{-1}q;q)_\infty}}\\
    &=\frac{1}{(q;q)_\infty^2}\sum_{a=0}^{n-2}(\Phi_a(q)-\Phi_{a-1}(q))(-1)^a e_a(q,\cdots,q^{n-2})\\
    &=\frac{1}{(q;q)_\infty^2}\sum_{m=0}^\infty(-1)^mq^{\frac{1}{2}m(m+1)}(1-q^{-m}) \sum_{a=0}^{n-2}q^{ma}(-1)^a e_a(q,\cdots,q^{n-2})\\
    &\overset{(1)}{=}\frac{1}{(q;q)_\infty^2}\sum_{m=0}^\infty(-1)^mq^{\frac{1}{2}m(m+1)}(1-q^{-m}) 
    (1-q^{m+1})\cdots (1-q^{m+n-2})\\
    &\overset{(2)}{=}\frac{1}{(q;q)_\infty^2}\sum_{m=0}^\infty(-1)^mq^{\frac{1}{2}m(m+1)}(1-q^{m+1}) 
    (1-q^{m+2})\cdots (1-q^{m+n-1})\\
    &=\frac{1}{(q;q)_\infty^2}\sum_{m=0}^\infty(-1)^mq^{\frac{1}{2}m(m+1)}(q^{m+1};q)_{n-1}.
    \end{align*}
Here, we have used \eqref{sq: symmetric poly} in (1); we have changed the sum $\sum_{m=0}^{\infty}$ into $\sum_{m=1}^{\infty}$, as the term $m=0$ is zero, and then $\sum_{m'=0}^{\infty}$ by setting $m=m'+1$ in (2).
By plugging the right-hand side into \eqref{eq: first manipulation}, we obtain the desired formula.
\end{proof}

The $q$-series appearing on the right-hand side of Theorem \ref{thm: character formula of the center} is understood as the generating function of plane partitions with a pit condition found in the theory of $q$-deformations of (universal) $\W$-algebras, called the quantum toroidal algebra $\widehat{\widehat{\gl}}_1$ \cite{BFM18, FM12}. 
Let us briefly recall the plane partitions.
A plane partition a of a nonnegative integer $N$ (or a plane partition of
weight $N$) is, by definition, a set of non-negative integers $\lambda=(\lambda_{i,j})_{i,j>0}$, satisfying the conditions
\begin{itemize}
    \item[(1)] $\lim\lambda_{i,j}=0$ as $i\rightarrow \infty$ or $j\rightarrow \infty$;
    \item[(2)] $\lambda_{i,j}\geq \lambda_{i,j+1}$ and $\lambda_{i,j}\geq \lambda_{i+1,j}$ for all $i,j$.
\end{itemize}
The sum $|\lambda|=\sum_{i,j}\lambda_{i,j}$ is called the weight of the plane partition $\lambda$. The set of plane partitions is denoted by $\text{PP}$. 

Note that each plane partitions $\lambda=(\lambda_{i,j})_{i,j>0}$ of weight $N$ can be represented by a matrix of size at most $N$ whose $(i,j)$-entry is $\lambda_{i,j}$. The pit condition $(i,j)\notin \lambda$ is, by definition, the condition $a_{i,j}=0$, or, equivalently $\lambda_{i',j'}=0$ for all $i'\geq i$ and $j'\geq j$ by (2).
Pictorially, the plane partitions can be realized as the 3d Young diagrams, i.e., the boxes of size one on the domain $\{(x,y,z)\in \R^3\mid x,y,z\geq0\}$ by placing $a_{i,j}$ boxes on $[i-1,i]\times [j-1,j]$ on the $xy$-plane. 
In the below, we present a plane partition $\lambda$ and the corresponding 3d Young diagram. The plane partition $\lambda$ satisfies the pit conditions $(1,4), (3,2), (4,1)\notin \lambda$, which are drawn as the black shadows on the $xy$-plane for the 3d Young diagram.
\begin{figure}[h]
\centering
\begin{minipage}{0.45\textwidth}
\[\lambda=\left[\begin{array}{cccc} 2 &2& 1 &0 \\1 &1 &1 &0 \\ 1&0 &0 & 0 \\ 0&0&0&0\end{array} \right]
\]
\end{minipage}
\hspace{1cm}
\begin{minipage}{0.45\textwidth}
\centering
\begin{tikzpicture}[scale=0.2]
    \fill[black] (0,0)--(2,0)--(3,1)--(1,1)--(0,0);
    \fill[black] (3,1)--(5,1)--(6,2)--(4,2)--(3,1);
    \fill[black] (9,3)--(11,3)--(12,4)--(10,4)--(9,3);
	\draw (1,1)--(1,3)--(3,5)--(3,7)--(4,8);
    \draw (3,1)--(3,3)--(5,5)--(5,7)--(6,8);
    \draw (3,1)--(4,2)--(4,4);
    \draw (6,2)--(6,4)--(7,5)--(7,7)--(8,8);
    \draw (7,5)--(8,6)--(8,8);
    \draw (8,2)--(8,4)--(10,6);
    \draw (8,2)--(10,4)--(10,6);
    \draw (9,3)--(9,5);
    \draw (1,1)--(3,1);
    \draw (1,3)--(3,3);
    \draw (2,4)--(8,4);
    \draw (4,2)--(8,2);
    \draw (3,5)--(9,5);
    \draw (3,7)--(7,7);
    \draw (4,8)--(8,8);
    \draw (8,6)--(10,6);
    \draw[->] (0,0)--(-1,-1);
    \draw[->] (4,8)--(4,10);
	\draw[->] (12,4)--(14,4);
	\end{tikzpicture}
\end{minipage}
\end{figure}

The classical theorem by MacMahon (see e.g.\ \cite[Chapter 7]{St99}) asserts that the generating function of the plane partition is given by the formula:
\begin{align*}
    \sum_{\lambda\in \mathrm{PP}} q^{|\lambda|}=\frac{1}{\displaystyle{\prod_{n>0}(1-q^n)^n}}.
\end{align*}
The generating functions of the plane partitions subject to the pit conditions were obtained by one of the authors \cite{FM12} and are expressed by the following formulas.

\begin{theorem}[\cite{BFM18, FM12}]
The generating function $F_{m,n}(q)$ of the plane partitions with a pit condition $(m+1,n+1)$ with $n\geq m$ is given by 
\begin{align*}
F_{m,n}(q)=\frac{1}{(q;q)_\infty^{n+m}}&\sum_{k_1\geq\cdots \geq k_m\geq 0} \Big( (-1)^{\sum k_i}q^{\frac{1}{2}\sum k_i(k_i+2i-1)}\\
&\hspace{1cm} \times \prod_{1\leq i<j \leq n}(1-q^{k_i-k_j+j-i})\prod_{1\leq i<j \leq m}(1-q^{k_i-k_j+j-i})\Big),
\end{align*}
where we set $k_i=0$ for $i>m$. 
\end{theorem}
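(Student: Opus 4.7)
The plan is to establish the formula by the free-fermion transfer-matrix method of Okounkov--Reshetikhin, combined with a Maya-diagram interpretation of the pit condition. Encoding a plane partition $\lambda$ by its diagonal slices $\mu^{(t)}$ ($t\in\Z$), an interlacing sequence of partitions satisfying $|\lambda|=\sum_t |\mu^{(t)}|$, one realizes the unconstrained generating function as a vacuum expectation value in the charge-zero infinite-wedge Fock space:
\[
\sum_{\lambda\in\mathrm{PP}}q^{|\lambda|}=\bigl\langle 0\bigr|\prod_{t<0}\Gamma_+(q^{-t-1/2})\prod_{t\geq 0}\Gamma_-(q^{t+1/2})\bigl|0\bigr\rangle,
\]
where $\Gamma_\pm(z)$ are the standard half-vertex operators built from the bosonic modes of the fermion current. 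Commuting all $\Gamma_-$'s past all $\Gamma_+$'s here recovers MacMahon's formula $\prod_n(1-q^n)^{-n}$; the task is to insert the pit $(m+1,n+1)\notin\lambda$ into this machinery.

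First, I would translate the pit condition: $\lambda_{i,j}=0$ for $(i,j)\geq(m+1,n+1)$ forces the slice $\mu^{(m-n)}$ along the central diagonal to be a partition of length $\leq m$, and moreover constrains all neighbouring slices to lie in a corresponding L-shape. Parametrize such a central slice by the column lengths $k_1\geq\cdots\geq k_m\geq 0$ (padded by zeros). Inserting the resolution of the identity $\sum_{\mu}|\mu\rangle\langle\mu|$ restricted to length-$\leq m$ partitions splits the VEV into two transfer-matrix halves, one over $t<m-n$ and one over $t\geq m-n$. Each half evaluates to a principally specialized skew Schur function $s_{\mu/\emptyset}(q,q^2,\ldots)$ of the appropriate shape, and by Jacobi--Trudi together with the Weyl denominator identity for $\widehat{\gl}_N$ — equivalently, by a Lindström--Gessel--Viennot determinant for non-intersecting lattice paths in a strip — such a factor has the closed form
\[
\frac{q^{\frac{1}{2}\sum_i k_i(k_i+2i-1)}\prod_{1\leq i<j\leq N}(1-q^{k_i-k_j+j-i})}{(q;q)_\infty^{N}},
\]
for $N=n$ (with $k_i=0$ for $i>m$) in one half and $N=m$ in the other, up to a sign $(-1)^{\sum_i k_i}$ coming from the fermion reorderings implicit in the Jacobi--Trudi determinant. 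Assembling the two halves pulls out the prefactor $(q;q)_\infty^{-(n+m)}$, combines the two Vandermonde-like products (of range $n$ and of range $m$), and collects the quadratic exponent, yielding the stated identity.

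The main obstacle is the combinatorial bookkeeping in the first step: carefully translating the two-dimensional pit condition $(m+1,n+1)\notin\lambda$ into the Maya-diagram/Frobenius parametrization $(k_1,\ldots,k_m)$, and pinning down the sign $(-1)^{\sum k_i}$ and the energy $\tfrac12\sum k_i(k_i+2i-1)$ that arise when moving fermionic excitations across the central cut. Once this dictionary is fixed, the remaining steps reduce to standard manipulations of principally specialized Schur functions. An alternative, perhaps conceptually cleaner, route — the one followed in \cite{BFM18} — realizes $F_{m,n}(q)$ directly as the character of a Fock-type module over the quantum toroidal algebra $\widehat{\widehat{\gl}}_1$ in the corner $(m,n)$-vacuum, and derives the alternating sum from a BGG-type resolution whose terms are Verma modules indexed by partitions of length $\leq m$; the combinatorial input for that resolution is essentially equivalent to the Maya-diagram bookkeeping above.
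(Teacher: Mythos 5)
First, a remark on context: the paper does not prove this statement at all — it is quoted from \cite{BFM18, FM12} and used as a black box to deduce Corollary \ref{cor: characters as plane partition counting} — so there is no internal proof to compare yours against; I am judging the sketch on its own terms.

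Your setup is the right starting point: the diagonal-slice/transfer-matrix formalism is standard, and the translation of the pit $(m+1,n+1)\notin\lambda$ into the single condition that the slice along the diagonal $j-i=n-m$ has at most $m$ parts is correct (the interlacing conditions propagate everything else). The gap is in the evaluation step. Conditioning on the central slice $k_1\geq\cdots\geq k_m\geq 0$ writes $F_{m,n}(q)$ as a \emph{positive} sum $\sum_{\mu}Z_-(\mu)Z_+(\mu)$, where each factor is the generating function of half plane partitions with prescribed boundary $\mu$ — a manifestly positive $q$-series. Such a factor therefore cannot equal your claimed closed form, which carries the sign $(-1)^{\sum k_i}$; and if both halves contributed the exponent $\tfrac12\sum k_i(k_i+2i-1)$, their product would produce $q^{\sum k_i(k_i+2i-1)}$, twice what the theorem asserts. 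So the summands of the theorem's alternating sum are \emph{not} in termwise bijection with central slices, and the sign cannot be produced by ``fermion reorderings implicit in Jacobi--Trudi.'' The entire content of the theorem is the resummation of the positive slice-sum into the alternating form; in \cite{BFM18} this comes from the representation theory of $\widehat{\widehat{\gl}}_1$ (a resolution of the module attached to the pit by Verma-type modules indexed by $(k_1,\dots,k_m)$), and your sketch only gestures at this in its final sentence while asserting, without justification, that it is ``essentially equivalent'' to the slice bookkeeping. It is not: already for $(m,n)=(1,1)$ the slice decomposition yields the positive series $\sum_{a\geq 0}q^{a}/(q;q)_a^2$, and equating this with $\sum_{k\geq 0}(-1)^{k}q^{k(k+1)/2}/(q;q)_\infty^2$ is a nontrivial $q$-series identity that your argument never supplies. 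Until that inclusion--exclusion/resolution step is carried out, the proposal does not establish the formula.
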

Let us consider the pit condition $(2,n+1)$ by setting $m=1$. Then, we have 
\begin{align*}
    F_{1,n}(q)
    &=\frac{1}{(q;q)_\infty^{n+1}}\sum_{k_1\geq 0}(-1)^{k_1}q^{\frac{1}{2}k_1(k_1+1)}\prod_{1\leq i<j \leq n}(1-q^{k_i-k_j+j-i}).
\end{align*}
Since 
\begin{align*}
\prod_{1\leq i<j \leq n}(1-q^{k_i-k_j+j-i})
&=\prod_{1\leq i< n}(1-q^{k_1+j-i}) \times \prod_{2\leq i<j \leq n}(1-q^{j-i})\\
&=(q^{k_1+1};q)_{n-1}\times (q;q)_{n-2}(q^2;q)_{n-3}\cdots (q^{n-2};q)_{1},
\end{align*}
we have
\begin{align*}
    F_{1,n}(q)
    \frac{\sum_{k_1=0}^\infty (-1)^{k_1}q^{\frac{1}{2}k_1(k_1+1)}(q^{k_1+1};q)_{n-1}}{(q;q)_\infty^2(q,\cdots,q^{n-1};q)_\infty},
\end{align*}
which agrees with the right-hand side of the equality in Theorem \ref{thm: character formula of the center}.
Therefore, we obtain the following corollary, generalizing the case $n=1$ proven by Molev and Mukhin in \cite{MM}.
\begin{corollary}\label{cor: characters as plane partition counting} For all $n\geq 1$, 
    \begin{align*}
        \ch[\cent(V^{\kk_c}(\gl_{n|1}))](q)=\sum_{\begin{subarray}c \lambda \in \mathrm{PP}\\ (2,n+1)\notin \lambda \end{subarray}} q^{|\lambda|}.
    \end{align*}
\end{corollary}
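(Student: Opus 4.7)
The plan is to compare the closed-form character given in Theorem~\ref{thm: character formula of the center} with the generating function $F_{1,n}(q)$ of plane partitions with pit condition $(2,n+1)$, obtained by specializing the formula of \cite{BFM18, FM12} at $m=1$. The comparison is purely combinatorial, so no new vertex-algebraic input is required.

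First, I would apply Theorem~\ref{thm: character formula of the center} to write
\begin{align*}
\ch[\cent(V^{\kk_c}(\gl_{n|1}))](q)=\frac{\sum_{m=0}^{\infty}(-1)^m q^{\frac{1}{2}m(m+1)}(q^{m+1};q)_{n-1}}{(q;q)_\infty^2(q,\cdots,q^{n-1};q)_\infty}.
\end{align*}
Next, I would take the formula of \cite{BFM18, FM12} with $m=1$ (so the sum collapses to a single index $k_1\geq 0$) and simplify the double product. The only factors in $\prod_{1\leq i<j\leq n}(1-q^{k_i-k_j+j-i})$ depending on $k_1$ are those with $i=1$, which contribute $\prod_{j=2}^n (1-q^{k_1+j-1})=(q^{k_1+1};q)_{n-1}$; the remaining factors $\prod_{2\leq i<j\leq n}(1-q^{j-i})$ produce $(q;q)_{n-2}(q^2;q)_{n-3}\cdots(q^{n-2};q)_1$, which combined with the $(q;q)_\infty^{-(n+1)}$ prefactor rearranges into exactly $(q;q)_\infty^{-2}(q,\ldots,q^{n-1};q)_\infty^{-1}$.

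Putting these together yields $F_{1,n}(q)=\ch[\cent(V^{\kk_c}(\gl_{n|1}))](q)$, and then the corollary follows from the combinatorial interpretation of $F_{1,n}(q)$ as the generating function of plane partitions $\lambda$ with $(2,n+1)\notin \lambda$.

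There is no substantive obstacle here: the combinatorial identity needed is a direct specialization of a known formula, and the only verification required is to track the prefactors $(q;q)_\infty^{-(n+1)}$ correctly and to recognize the $q$-Pochhammer identity
\begin{align*}
(q;q)_{n-2}(q^2;q)_{n-3}\cdots(q^{n-2};q)_1 = \prod_{j=1}^{n-1}(q^j;q)_\infty^{-1}\cdot (q;q)_\infty^{n-1}
\end{align*}
after absorbing into the denominator. The mild bookkeeping aside, this is the kind of proof that amounts to matching two explicit $q$-series, and hence should be presented as a short computation following Theorem~\ref{thm: character formula of the center}.
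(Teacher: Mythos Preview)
Your proposal is correct and follows essentially the same route as the paper: the paper also specializes the \cite{BFM18, FM12} formula at $m=1$, splits $\prod_{1\leq i<j\leq n}(1-q^{k_i-k_j+j-i})$ into the $i=1$ factors $(q^{k_1+1};q)_{n-1}$ and the remaining factors, and then matches against Theorem~\ref{thm: character formula of the center}. The only bookkeeping to watch is that the constant product $\prod_{2\leq i<j\leq n}(1-q^{j-i})$ equals $\prod_{j=1}^{n-1}(q;q)_{j-1}$ (grouping by $i$), which is what combines with $(q;q)_\infty^{-(n+1)}$ to give the desired denominator.
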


\section{Regular $\W$-superalgebras beyond the \tmath{$\gl_{n|1}$} case}\label{sec: Some conjectures}
\subsection{Conjectures}
To describe the center $\cent(V^{\kappa_c}(\gl_{n|1}))$, the regular $\W$-superalgebra $\W^\kappa(\gl_{n|1})$ and its duality to the subregular $\W$-algebra $\W^{\check{\kappa}}(\gl_{n},\OO_{\sub})$ played a crucial role. 
A more general duality has been proposed for other regular $\W$-superalgebras $\W^\kappa(\gl_{n|m})$ in \cite{CFLN}.
We may assume $n\geq m$ and let $\W^{\check{\kappa}}(\gl_{n},\OO_{[n-m,1^m]})$ denote the $\W$-algebra associated with the nilpotent orbit $\OO_{[n-m,1^m]}$ corresponding to the Jordan block $[n-m,1^m]$. 
\begin{conjecture}[\cite{CFLN}] \label{Ito conjecture}
For irrational levels $\kappa,\check{\kappa} \notin \Q$ satisfying the relation 
\begin{align*}
    (\kk+n-m)(\check{\kk}+n)=1,
\end{align*}
there is an isomorphism of vertex superalgebras 
\begin{align*}
    \W^{\kk}(\gl_{n|m})\simeq \Com \left(V^{\check{\kk}_{\circ}}(\gl_{m}), \W^{\check{\kk}}(\gl_{n},\OO_{[n-m,1^m]})\otimes \mathcal{F}_{n|m} \right)
\end{align*}
where the $\widehat{\gl}_{m}$-action is the diagonal and 
\begin{align*}
    \mathcal{F}_{n|m}=\begin{cases}
        V_{\Z^m}, & (n>m),\\
        \mathcal{A}^{m}\otimes V_{\Z^m},  & (n=m),
    \end{cases}
    \quad \check{\kk}_{\circ}=\check{\kk}+(n-m).
\end{align*}
\end{conjecture}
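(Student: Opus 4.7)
The plan is to extend the duality argument of Theorem \ref{FS duality} from $\gl_{n|1}$ to $\gl_{n|m}$ by promoting the $m=1$ ingredients---the single lattice $V_\Z$, the diagonal Heisenberg $\pi_{H_\Delta}$, and the screening-by-screening matching $\sigma$ of Lemma \ref{FS duality for FF}---to their rank-$m$ analogues, with the non-abelian part of $V^{\check{\kk}_\circ}(\gl_m)$ handled after an initial reduction to its Cartan.

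First I would set up Wakimoto-type free field realizations for both sides. The $\W$-superalgebra $\W^\kk(\gl_{n|m})$ admits a realization in a free field algebra of the form $\mathcal{A}_{\mathrm{odd}} \otimes \pi^{k+\dC}_{\h}$ generalizing Theorem \ref{thm: Wakimoto for sW}, where $\mathcal{A}_{\mathrm{odd}}$ is a tensor product of $bc$-systems indexed by the odd positive roots surviving the regular BRST reduction, and the image is cut out by screening operators $S_i$ attached to the simple roots. Dually, $\W^{\check{\kk}}(\gl_n, \OO_{[n-m,1^m]})$ has a hook-type Wakimoto realization (\cite{CGN21, G20}) in a product of $m(n-m)$ $\beta\gamma$-systems with $\pi^{\check{k}+\check{\dC}}_{\check{\h}}$, cut out by screenings $\check{S}_i$. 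Localizing the $\beta\gamma$-fields along the hook rows introduces lattice vertex superalgebras, and tensoring with $\mathcal{F}_{n|m}$ enlarges the lattice to the correct rank and supplies the missing odd parities.

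The central construction is an isomorphism of Heisenberg vertex algebras generalizing Lemma \ref{FS duality for FF},
\begin{equation*}
\sigma : \pi^{k+\dC}_{\h} \otimes (V_\Z)^{\otimes m} \xrightarrow{\,\simeq\,} \Com\bigl(\pi_{H_\Delta^{(m)}},\, (\pi^{\check{k}+\check{\dC}}_{\check{\h}} \otimes (\mathcal{A}^\times)^{\otimes m(n-m)}) \otimes \mathcal{F}_{n|m}\bigr),
\end{equation*}
where $\pi_{H_\Delta^{(m)}}$ is the rank-$m$ Heisenberg subalgebra of the diagonally embedded $V^{\check{\kk}_\circ}(\gl_m)$. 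One verifies by direct $\lambda$-bracket computation that $\sigma$ intertwines the screenings $S_i \leftrightarrow \check{S}_i$, thereby identifying the Heisenberg-coset pieces on both sides. To upgrade to the full non-abelian coset, one then checks that the additional $\gl_m$-root-vector invariants on the right correspond, under $\sigma$, to the remaining simple-root screenings on the $\gl_{n|m}$ side that characterize $\W^\kk(\gl_{n|m})$ inside its free field algebra.

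The main obstacle is this non-abelian step: for $m>1$, taking invariants under the full current algebra $V^{\check{\kk}_\circ}(\gl_m)$ is genuinely more than a Heisenberg coset, and one must show that the positive-root screenings of $\gl_m$ on the right match the missing simple-root screenings of $\gl_{n|m}$ on the left after $\sigma$. The irrationality assumption $\kk,\check{\kk}\notin \Q$ enters crucially here, guaranteeing that both vertex superalgebras are characterized by their screening kernels and that no spurious invariants arise in the coset. A secondary difficulty is the degenerate case $n=m$: then $\OO_{[1^m]}$ is the zero orbit, $\W^{\check{\kk}}(\gl_m,\OO_{[1^m]}) = V^{\check{\kk}}(\gl_m)$, and the $\beta\gamma$-hook disappears; this is precisely why the extra $\mathcal{A}^m$ factor appears in $\mathcal{F}_{m|m}$, and $\sigma$ must be reconstructed from the affine Wakimoto realization of Proposition \ref{Wakimoto for affine I} applied to $V^{\check{\kk}}(\gl_m)$ coupled with the lattice $V_{\Z^m}$.
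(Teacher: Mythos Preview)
The statement you are attempting to prove is labeled \emph{Conjecture} in the paper (Conjecture~\ref{Ito conjecture}), attributed to \cite{CFLN}, and the paper does not supply a proof. It is presented as an open problem motivating the critical-level Conjectures~\ref{conj: reg Wsalg} and \ref{critical Ito conjecture}; the only case treated in the paper is the $m=1$ situation of Theorem~\ref{FS duality}, which is already established in \cite{CGN21, FS}. So there is no proof in the paper against which to compare your attempt.

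Your proposal is a reasonable outline of how one might try to push the $m=1$ argument to general $m$, and you correctly isolate the genuine obstruction: for $m>1$ the coset by $V^{\check{\kk}_\circ}(\gl_m)$ is non-abelian, and matching the full set of $\gl_m$-root-vector invariants on the right with the remaining simple-root screenings on the $\gl_{n|m}$ side is not a formal consequence of the Heisenberg isomorphism $\sigma$. You acknowledge this, but you do not indicate how to carry it out; in particular, the step ``one then checks that the additional $\gl_m$-root-vector invariants on the right correspond, under $\sigma$, to the remaining simple-root screenings'' is precisely the content of the conjecture and is not reducible to a $\lambda$-bracket computation in the way the abelian $m=1$ case is. As written, the proposal is a plausible strategy sketch rather than a proof, and its main step remains at the level of the original open problem.
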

Note that the dual of the critical level $\kappa=\kappa_c$ is $\check{\kk}=\infty$, which is naturally interpreted as the limit of $\W^{\check{\kk}}(\gl_{n},\OO_{[n-m,1^m]})$ to the Poisson vertex supalgebra 
$$\W^{\infty}(\gl_{n},\OO_{[n-m,1^m]})=\C[J_\infty \mathscr{S}_{\OO}].$$
Here, the right-hand side is the algebra of regular functions over the jet (super)scheme associated to the Slodowy slice $\mathscr{S}_{\OO}\subset \gl_n$ for the orbit $\OO=\OO_{[n-m,1^m]}$. The slice $\mathscr{S}_{\OO}$ is a Poisson affine superspace equipped with $\mathrm{GL}_m$-action induced by a natural embedding $\gl_m\subset \mathscr{S}_{\OO}$. The $\mathrm{GL}_m$-action decomposes $\mathscr{S}_{\OO}$ into 
\begin{align*}
    \mathscr{S}_{\OO} \simeq \A^{n-m} \oplus \gl_m \oplus \C^m \oplus \overline{\C}^m
\end{align*}
where $\A^{n-m}$ is the $(n-m)$-dimensional trivial representation and $\C^m$ is the natural representation with $\overline{\C}^m$ its dual.
The subspace $\A^{n-m}$ generates a commutative vertex algebra $\C[J_\infty \A^{n-m}]$ while $\C^m \oplus \overline{\C}^m$ does a commutative one $\C[J_\infty (\C^m \oplus \overline{\C}^m)]$ equipped with a $\GL_m$-action, which we identify with the degenerate $\beta\gamma$-system vertex algebra $\sA^m_{\mathrm{deg}}$. 
The $\gl_m$-structure on $\C[J_\infty \mathscr{S}_{\OO}]$ motivates the following conjecture, the critical level analogue of Conjecture \ref{Ito conjecture}.

\begin{conjecture}\label{conj: reg Wsalg}
There is an isomorphism of vertex superalgebras 
\begin{align*}
    \W^{\kk_c}(\gl_{n|m}) \simeq \C[J_\infty \A^{n-m}] \otimes (\sA^m_{\mathrm{deg}} \otimes \mathcal{F}_{n|m})^{\GL_m}.
\end{align*}
\end{conjecture}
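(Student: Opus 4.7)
The plan is to adapt the strategy used in the proof of Theorem~\ref{critical level structure of reg superW} (the $m=1$ case) by combining a Wakimoto-type free field realization of $\W^{\kk_c}(\gl_{n|m})$ with the large level limit of the Feigin--Semikhatov-type duality in Conjecture~\ref{Ito conjecture}. The critical level $\kk=\kk_c$ is not directly covered by that duality since it corresponds to $\check{\kk}=\infty$, so the main technical step is to construct an $\dO$-integral form of the duality, analogous to the construction in Section~\ref{sec: Center of affine at the critical level}, and then to specialize.

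Concretely, I would first construct integral forms of both sides of Conjecture~\ref{Ito conjecture} over a ring $\dO=\C[\x]$ with $\x$ corresponding to $\kk+(n-m)$. The Wakimoto realization of $\W^\kk(\gl_{n|m})$ from \cite{CGN21} depends continuously on $\kk$ and lifts to an embedding over $\dO$; the Wakimoto realization of the rectangular $\W$-algebra $\W^{\check{\kk}}(\gl_n,\OO_{[n-m,1^m]})$ extends analogously over the dual ring $\dso$. The compatibility of the screening operators, which is the essential content of Conjecture~\ref{Ito conjecture}, would promote the conjectured isomorphism to one of vertex superalgebras over $\dO$ after suitable localization. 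Evaluating at $\check{\kk}=\infty$ then replaces $\W^{\check{\kk}}(\gl_n,\OO_{[n-m,1^m]})$ by its classical Poisson vertex algebra limit, which by \cite{Ar15} is the jet algebra $\C[J_\infty \mathscr{S}_\OO]$ of the Slodowy slice. Simultaneously, the shifted level $\check{\kk}_\circ=\check{\kk}+(n-m)$ tends to infinity, so the coset with $V^{\check{\kk}_\circ}(\gl_m)$ becomes the algebra of $\GL_m$-invariants under the classical $\GL_m$-action on $J_\infty \mathscr{S}_\OO$.

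The final identification is then purely geometric: the $\GL_m$-equivariant decomposition $\mathscr{S}_\OO\simeq \A^{n-m}\oplus \gl_m \oplus \C^m \oplus \overline{\C}^m$ induces a $\GL_m$-equivariant tensor decomposition at the jet level $\C[J_\infty \mathscr{S}_\OO]\simeq \C[J_\infty \A^{n-m}]\otimes \C[J_\infty \gl_m]\otimes \sA^m_{\mathrm{deg}}$. The trivial summand contributes the central $\C[J_\infty \A^{n-m}]$-factor; the hypermultiplet summand produces $\sA^m_{\mathrm{deg}}$; and the $\gl_m$-summand pairs with the diagonal $V^{\check{\kk}_\circ}(\gl_m)$-coset in the limit to leave only the $\GL_m$-invariant product with the untouched tensor factor $\mathcal{F}_{n|m}$. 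Combining these pieces reproduces the right-hand side of Conjecture~\ref{conj: reg Wsalg}.

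The main obstacle is that Conjecture~\ref{Ito conjecture} itself is open for $m\geq 2$, so its $\dO$-integral form rests on equally uncertain ground. An unconditional proof would follow the direct route of Theorem~\ref{critical level structure of reg superW}: exhibit $n-m$ algebraically independent central strong generators of $\W^{\kk_c}(\gl_{n|m})$---the natural candidates being the coefficients of a pseudo-differential operator $(\pd-u_1)\cdots(\pd-u_n)(\pd+u_{n+1})^{-1}\cdots(\pd+u_{n+m})^{-1}$ generalizing Theorem~\ref{THM: FF center for gln1 vis pD op}---and then identify the remaining generators with the asserted $\GL_m$-invariant orbifold. For $m\geq 2$ this separation is delicate since the non-central sector must carry a full $\GL_m$-action rather than just the $U(1)$ of the $m=1$ analysis, and the $n=m$ case presents a further difficulty because the extra $\sA^m$ factor in $\mathcal{F}_{m|m}$ is not produced by the critical level limit of Conjecture~\ref{Ito conjecture} and must arise from an intrinsic free field realization specific to the square case.
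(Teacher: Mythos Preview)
The statement you are addressing is labeled a \emph{conjecture} in the paper, not a theorem: the paper does not claim a proof in general. The only case treated beyond $m=1$ is $\gl_{3|2}$, and that proof is entirely computational rather than conceptual: the authors write down the explicit Kac--Wakimoto presentation of $\W^{\kk_c}(\sll_{3|2})$ by generators and $\lambda$-brackets, list the strong generators $J_{i,p}$ of $(\sA^2_{\mathrm{deg}}\otimes V_{\Z^2})^{\GL_2}$, exhibit an explicit map $\Psi$ on generators, verify by hand that it respects the $\lambda$-brackets, and conclude it is an isomorphism by matching characters (Lemma~\ref{lem: character formula}). No integral form, no limiting argument, and no pseudo-differential operator appears in that verification.

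Your outline is a plausible strategy for a general proof, and you correctly flag its dependence on the open Conjecture~\ref{Ito conjecture}. But even granting that conjecture, the passage from the coset at finite $\check{\kk}$ to $\GL_m$-invariants at $\check{\kk}=\infty$ is not justified: for $m\geq 2$ the commutant is with a non-abelian affine vertex algebra, and there is no established analogue of the semisimplicity argument used for Heisenberg cosets in the $m=1$ case to guarantee that $\Com(V^{\check{\kk}_\circ}(\gl_m),-)$ specializes to $(-)^{\GL_m}$ under $\check{\kk}_\circ\to\infty$. Your handling of the $\gl_m$-summand of $\mathscr{S}_\OO$ is also too quick: the sentence ``pairs with the diagonal $V^{\check{\kk}_\circ}(\gl_m)$-coset in the limit to leave only the $\GL_m$-invariant product'' does not explain why $\C[J_\infty\gl_m]$ disappears from the answer rather than contributing its own invariants. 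Your alternative direct route via a pseudo-differential operator $(\pd-u_1)\cdots(\pd-u_n)(\pd+u_{n+1})^{-1}\cdots(\pd+u_{n+m})^{-1}$ is closer in spirit to the $m=1$ analysis, but the paper makes no attempt to carry this out for $m\geq 2$, and its $\gl_{3|2}$ verification neither uses nor suggests such a construction.
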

As a consequence, the centers of $\W^{\kk_c}(\gl_{n|m})$ and $V^{\kk_c}(\gl_{n|m})$ should have the following description, thanks to \cite{A}.

\begin{conjecture}\label{critical Ito conjecture}
There are isomorphisms of (commutative) vertex algebras 
\begin{align*}
     \cent(V^{\kk_c}(\gl_{n|m})) \simeq  \cent(\W^{\kk_c}(\gl_{n|m})) \simeq \C[J_\infty \A^{n-m}] \otimes \sA^m_{\mathrm{deg}}{}^{\GL_m}.
\end{align*}
\end{conjecture}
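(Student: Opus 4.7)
The plan is to mimic the strategy used for $\gl_{n|1}$, reducing the center computation to the structural description of $\W^{\kk_c}(\gl_{n|m})$ predicted by Conjecture \ref{conj: reg Wsalg}. First, I would generalize the Wakimoto arguments of Section \ref{sec: Wakimoto realization} to $\gl_{n|m}$: adapting Proposition \ref{Wakimoto for affine I}, Lemma \ref{FF center goes Heis}, and Lemma \ref{FF center lies in W-center}, one obtains an embedding $\w{\rho}\colon V^{\kk_c}(\gl_{n|m})\hookrightarrow \sA\otimes \pi^0_\h$ whose restriction to the center lands in $\pi^0_\h$, and an induced injection
$$\iota\colon \cent(V^{\kk_c}(\gl_{n|m}))\hookrightarrow \cent(\W^{\kk_c}(\gl_{n|m})).$$
This reduces the first isomorphism in the statement to a surjectivity check, postponed to the final step.

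Next, I would establish Conjecture \ref{conj: reg Wsalg} itself by taking the $\check{\kk}\to\infty$ specialization of the duality in Conjecture \ref{Ito conjecture}, using an integral form of the duality over a local ring in $(\check{\kk}+\check{\dC})^{-1}$, along the lines of the large-level arguments in Sections \ref{sec: duality} and \ref{sec: Poisson structure on the center}. In this limit $\W^{\check{\kk}}(\gl_n,\OO_{[n-m,1^m]})$ specializes to the Poisson vertex algebra $\C[J_\infty\mathscr{S}_\OO]$, the diagonal affine $\w{\gl}_m$-action degenerates to the classical $\GL_m$-action compatible with the embedding $\gl_m\subset\mathscr{S}_\OO$, and the coset by the affine $\w{\gl}_m$ becomes the Hamiltonian reduction at zero, i.e.\ taking $\GL_m$-invariants of the moment-map fiber. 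Using the $\GL_m$-equivariant decomposition $\mathscr{S}_\OO\simeq\A^{n-m}\oplus\gl_m\oplus\C^m\oplus\overline{\C}^m$, one isolates $\C[J_\infty\A^{n-m}]$ as a central tensor factor, eliminates the $\gl_m$-direction through the moment map, and assembles the remaining pieces into $(\sA^m_{\mathrm{deg}}\otimes\mathcal{F}_{n|m})^{\GL_m}$.

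Given Conjecture \ref{conj: reg Wsalg}, the center computation goes as follows. Because $\sA^m_{\mathrm{deg}}$ is a commutative vertex algebra, $\sA^m_{\mathrm{deg}}{}^{\GL_m}$ automatically sits inside the center of $(\sA^m_{\mathrm{deg}}\otimes\mathcal{F}_{n|m})^{\GL_m}$. For the reverse inclusion, decompose $\mathcal{F}_{n|m}$ into its $\GL_m$-isotypic Fock components; a central element must have trivial isotypic type, which for $n>m$ forces the $V_{\Z^m}$-contribution to be the vacuum, and for $n=m$ the same argument on $\mathcal{A}^m\otimes V_{\Z^m}$ applies. Finally, the surjectivity of $\iota$ in the first step can be verified on associated graded spaces under the Li filtration, by feeding in enough explicit central elements from $V^{\kk_c}(\gl_{n|m})$, for instance the Molev--Ragoucy Segal--Sugawara vectors of Theorem \ref{Segal-Sugawara vectors}, and matching their associated graded classes with generators of $\C[J_\infty\A^{n-m}]\otimes\sA^m_{\mathrm{deg}}{}^{\GL_m}$, in parallel with Proposition \ref{affine ss polys in FF center}.

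The main obstacle will be the second step, the large-level specialization of Conjecture \ref{Ito conjecture}. Unlike the $m=1$ case, where the relevant coset is with respect to the Heisenberg vertex algebra and hence controlled by the completely reducible decomposition \eqref{eq: Heisenberg decomposition}, here the coset is taken with respect to the non-abelian affine algebra $\w{\gl}_m$, for which no such semisimplicity is available at generic levels. Building a coherent integral form in which the $\w{\gl}_m$-coset behaves well up to and including $\check{\kk}=\infty$, and tracking its degeneration into the jet scheme of $\mathscr{S}_\OO$ as a $\GL_m$-space, is the most demanding part. A secondary difficulty is the invariant-theoretic analysis of $(\sA^m_{\mathrm{deg}}\otimes\mathcal{F}_{n|m})^{\GL_m}$: for $m\geq 2$ the classical invariants of $\GL_m$ acting on several copies of its natural and dual representations are non-trivial, so one must carefully rule out exotic central elements mixing the commutative factor $\sA^m_{\mathrm{deg}}$ with the non-commutative $\mathcal{F}_{n|m}$.
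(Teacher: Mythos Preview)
The statement you are attempting to prove is recorded in the paper as a \emph{conjecture}, not a theorem; the paper offers no proof in general. What the paper does establish is the special case $\gl_{3|2}$ of the preceding Conjecture~\ref{conj: reg Wsalg}, and it asserts (citing \cite{A}) that Conjecture~\ref{critical Ito conjecture} should follow once Conjecture~\ref{conj: reg Wsalg} is known. Even for $\gl_{3|2}$ the first isomorphism $\cent(V^{\kk_c})\simeq\cent(\W^{\kk_c})$ is not argued explicitly in the paper.

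Your plan has a genuine gap in the second step. You propose to establish Conjecture~\ref{conj: reg Wsalg} by specializing Conjecture~\ref{Ito conjecture} at $\check{\kk}=\infty$, but Conjecture~\ref{Ito conjecture} is itself open in the paper (it is known only for $m=1$, via \cite{CGN21,FS}). You rightly flag this as the main obstacle, but it is not merely a technical hurdle: it is precisely the open problem, and your scheme does not reduce it to anything more tractable. The paper does not attempt Conjecture~\ref{Ito conjecture} for $\gl_{3|2}$ either; instead it bypasses the duality entirely and verifies Conjecture~\ref{conj: reg Wsalg} for $\gl_{3|2}$ by brute force: it presents $\W^{\kk_c}(\sll_{3|2})$ via explicit generators and $\lambda$-brackets using \cite{KW04}, writes down an explicit homomorphism to $(\sA^2_{\mathrm{deg}}\otimes V_{\Z^2})^{\GL_2}$, checks surjectivity on the known strong generators $J_{i,p}$, and concludes bijectivity by matching $q$-characters (Lemma~\ref{lem: character formula}). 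This hands-on method does not visibly scale to general $m$, which is why the general statement remains conjectural.

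Your steps one, three, and four are reasonable in outline and broadly parallel to what the paper does for $m=1$, but they do nothing to close the gap in step two.
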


We note that this conjecture agrees with our previous conjecture \cite{AN-2024} on the description of $\cent(V^{\kk_c}(\gl_{n|m}))$.
In the remainder of this section, we prove these conjectures for small-rank cases.

\subsection{Case of \tmath{$\gl_{3|2}$}}
Based on \cite{KW04}, it is straightforward to compute the following presentation of $\W^{\kk_c}(\sll_{3|2})$ by the generators and relations.

\begin{proposition}
   The $\W$-superalgebra $\W^{\kk_c}(\sll_{3|2})$ admits freely generated by  
\begin{align*}
    J, S_2,L,S_3\ \mathrm{(even)},\qquad  s_1^\pm, s_2^\pm\ \mathrm{(odd)}
\end{align*}
with $S_2, S_3$ central, satisfying the following $\lambda$-brackets
\begin{align*}
    &[J {}_\lambda J]=2\lambda,\quad [J {}_\lambda L]=0,\quad [L{}_\lambda L]=\partial L+ 2L\lambda +\frac{1}{12}\lambda^3,\\
    &\\
    & [J{}_\lambda s^\pm_{i}]=\pm s^\pm_{i}\quad (i=1,2),\\
       & [L{}_\lambda s^\pm_{1}]=s^\pm_{2}+\tfrac{1}{4} s^\pm_{1}\lambda,\quad  [L{}_\lambda s^\pm_{2}]=Ls^\pm_{1}+\tfrac{5}{4} s^\pm_{1} \lambda+\tfrac{1}{4} s^\pm_{1}\lambda^2,\\
    &\\
    &[s^\pm_{1}{}_\lambda s^\pm_{1}]=0,\quad [s^\pm_{1}{}_\lambda s^\pm_{2}]=0,
        \quad [s^\pm_{2}{}_\lambda s^\pm_{2}]=\tfrac{1}{4}s_1^\pm \pd s_1^\pm-s_1^\pm{}s_2^\pm,\\
    &[s_1^+{}_\lambda s_1^-]= S_2\\
        &[s_1^+{}_\lambda s_2^-]=(\tfrac{2}{9}S_3+\tfrac{1}{6}\partial S_2+\tfrac{1}{2}JS_2-\tfrac{1}{2}s_1^+s_1^-)+\tfrac{1}{2}S_2\lambda,\\
        &[s_2^+{}_\lambda s_1^-]= -(\tfrac{2}{9}S_3+\tfrac{1}{6} \partial S_2+\tfrac{1}{2}JS_2-\tfrac{1}{2}s_1^+s_1^-)-\tfrac{1}{2}S_2\lambda,\\
        &[s_2^+{}_\lambda s_2^-]=(-\tfrac{1}{4}J \partial S_2-S_2L-\tfrac{1}{2}s_1^-s_2^+-\tfrac{1}{2}s_1^+s_2^-+\tfrac{1}{4}s_1^+\partial s_1^--\tfrac{1}{4}S_2 \partial J-\tfrac{1}{9}\partial S_3+\tfrac{1}{24}\partial^2S_2)\\
        &\hspace{2cm}+(-\tfrac{2}{9}S_3-\tfrac{1}{6} \partial S_2-\tfrac{1}{2}JS_2+\tfrac{1}{4}s_1^+s_1^-)\lambda-\tfrac{3}{8}S_2\lambda^2. 
\end{align*}
\end{proposition}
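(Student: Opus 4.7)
The plan is to combine the Kac--Wakimoto structure theorem \cite{KW04} with an explicit super-Wakimoto realization, specialized to $\g=\sll_{3|2}$ at $\kk=\kk_c$. The entire statement is meant to be a direct calculation once the right coordinates on the free field side are chosen.

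First, I would identify the strong generating type. Fixing the regular nilpotent $f$ and an even good grading $\Gamma$ on $\sll_{3|2}$, \cite{KW04} produces one strong generator $w_{\mathfrak{q}}$ of quasi-conformal weight $1-\Gamma(\mathfrak{q})$ for each element $\mathfrak{q}$ of a homogeneous basis of the centralizer $(\sll_{3|2})^f$, and their differential monomials form a PBW basis. A direct inspection of $(\sll_{3|2})^f$ produces four even generators, to be labeled $J,L,S_2,S_3$ of quasi-conformal weights $1,2,2,3$, and four odd generators to be labeled $s_1^\pm,s_2^\pm$ carrying $J$-charge $\pm1$ and quasi-conformal weights $1,1,2,2$. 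This already establishes free strong generation.

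Next, I would produce explicit representatives through the super-Wakimoto realization analogous to Proposition \ref{Wakimoto for principal super I} (whose construction does not use the rank-one property of $\gl_{n|1}$ in any essential way), embedding $\W^{\kk_c}(\sll_{3|2})$ into a tensor product of a small $bc$-system/lattice piece with the Heisenberg vertex algebra $\pi^{0}_{\mathfrak{h}}$ attached to the Cartan of $\sll_{3|2}$. At the critical level, the Heisenberg factor is central in the ambient free field algebra, so one can work with explicit normally ordered expressions rather than with screening kernels when verifying the brackets.

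Finally, all stated $\lambda$-brackets reduce to routine Wick calculus on the free field side. Most brackets are determined up to scalars by $J$-charge conservation and quasi-conformal weight counting, with super-Jacobi relative to $J$ and $L$ fixing the constants. The centrality of $S_2,S_3$ amounts to checking $\lm{S_i}{X}=0$ for each remaining generator $X$, a check that is forced by the critical-level analogue of the Segal--Sugawara argument of Section \ref{sec: Wsuperalgebras}. The main computational obstacle is the bracket $\lm{s_2^+}{s_2^-}$, which contains a quartic normally ordered expression in $J, s_1^\pm, s_2^\pm$ together with derivative corrections and couplings to $S_2, S_3, L$; however, once the $J$-charge, conformal weight, and super-Jacobi identities involving the pairs $(s_1^\pm,s_2^\mp)$ and $(s_1^\pm,s_1^\mp)$ are imposed, the remaining coefficients are pinned down almost uniquely, so no new technique is needed beyond careful bookkeeping.
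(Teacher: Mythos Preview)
Your plan is in the same spirit as the paper's: the paper gives no argument at all beyond the sentence ``Based on \cite{KW04}, it is straightforward to compute the following presentation,'' so invoking the Kac--Wakimoto structure theorem and then computing the $\lambda$-brackets is exactly what is intended.

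There are, however, two concrete problems with your execution. First, your claimed quasi-conformal weights $1,1,2,2$ for the odd generators are incorrect. The paper, when comparing characters in the theorem immediately following, assigns $\Delta(s_1^\pm)=3/2$ and $\Delta(s_2^\pm)=5/2$; these are forced by the decomposition of the odd part of $(\sll_{3|2})^f$ as $V_3\oplus V_1$ (twice) under the $\sll_2$-triple, with lowest weights $-3$ and $-1$, so that $1-\Gamma$ gives $5/2$ and $3/2$. The Dynkin grading here is genuinely half-integral, and your assertion of an even good grading with weights $1,1,2,2$ does not match any good grading for this nilpotent. This means the BRST complex carries a neutral free-fermion factor from $\g_{1/2}$, which your plan does not mention.

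Second, the Wakimoto detour is unnecessary and not justified by the paper. The Kac--Wakimoto theorem already produces explicit $d$-closed representatives $w_{\mathfrak q}$ inside the BRST complex $C^\bullet_f(V^\kk(\g))$ itself, and all $\lambda$-brackets can be computed there by Wick calculus on the affine and ghost pieces. Your appeal to a super-Wakimoto realization ``analogous to Proposition \ref{Wakimoto for principal super I}'' implicitly assumes the construction extends from $\gl_{n|1}$ to $\sll_{3|2}$; the paper does not develop this, and it is not needed. The entire Proposition is a finite (if lengthy) calculation in the BRST model, exactly as \cite{KW04} sets up.
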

On the other hand, recall from \cite[Proposition 5.2]{A} that the vertex superalgebra $(\sA^2_{\mathrm{deg}} \otimes V_{\Z^2})^{\GL_2}$ is strongly generated by 
\begin{align*}
    &J_{0,p}=-\sum \psi_i\pd^p \psi^*_i,\quad J_{1,p}=\sum a_i\pd^p a_i^*, \\
    &J_{2,p}=-\sum \psi_i\pd^p a_i^*,\quad J_{3,p}=\sum a_i\pd^p \psi^*_i.
\end{align*}
for $ 0 \leq p <2$ where the sum is taken over $i=1,2$.
We introduce the (quasi-) conformal grading on $\sA^2_{\mathrm{deg}} \otimes V_{\Z^2}$ by the formula 
\begin{align*}
 &\Delta(a_i)=\Delta(a_i^*)=1,\quad \Delta(\psi_i)=\Delta(\psi_i^*)=1/2.
\end{align*}
Since the $\GL_2$-action preserves the grading, it induces a grading on $(\sA^2_{\mathrm{deg}} \otimes V_{\Z^2})^{\GL_2}$. The character is given by the following formula by using the Pochhammer symbol
$(a_1,\cdots,a_n;q)_\infty=\prod_{i=1}^n \prod_{p=0}^\infty(1-a_iq^p).$
\begin{lemma}\label{lem: character formula}
    $$\ch[(\sA^2_{\mathrm{deg}}\otimes V_{\Z^2})^{\GL_2}]=\frac{(-q^{3/2},-q^{3/2},-q^{5/2},-q^{5/2};q)_\infty}{(q,q^2,q^2,q^3;q)_\infty}.$$
\end{lemma}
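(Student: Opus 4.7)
My plan is to derive the character by combining the explicit strong generation of the orbifold from \cite[Proposition 5.2]{A} with a freeness check at the associated graded level. Using $\Delta(a_i) = \Delta(a_i^*)=1$ and $\Delta(\psi_i)=\Delta(\psi_i^*)=1/2$, the strong generators $J_{r,p}$ ($0\leq r\leq 3$, $0\leq p< 2$) carry the following conformal weights: the even generators $J_{0,0}, J_{0,1}, J_{1,0}, J_{1,1}$ have weights $1, 2, 2, 3$, while the odd generators $J_{2,0}, J_{3,0}, J_{2,1}, J_{3,1}$ have weights $3/2, 3/2, 5/2, 5/2$. Assuming these strongly and freely generate the orbifold, the character is then given by the standard product formula for a freely generated vertex superalgebra,
\begin{align*}
\prod_{n\geq 0}\frac{(1+q^{3/2+n})^2(1+q^{5/2+n})^2}{(1-q^{1+n})(1-q^{2+n})^2(1-q^{3+n})} = \frac{(-q^{3/2}, -q^{3/2}, -q^{5/2}, -q^{5/2};q)_\infty}{(q, q^2, q^2, q^3;q)_\infty},
\end{align*}
which is exactly the formula in the lemma.

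To verify freeness I will pass to the associated graded Poisson superalgebra with respect to the Li filtration, where $\sA^2_{\mathrm{deg}} \otimes V_{\Z^2}$ becomes a (super)commutative differential polynomial algebra on the symbols of $a_i, a_i^*, \psi_i, \psi_i^*$. The $\GL_2$-orbifold then reduces to a classical invariant-theory problem for $\GL_2$ acting on the arc space of the corresponding bosonic and fermionic defining representations. A direct Molien--Weyl (Hilbert) series computation for these classical invariants must reproduce the product above; freeness at the classical level then lifts to freeness of the strong generators in the vertex superalgebra by standard PBW-type arguments, which allows the character to be read off.

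The main obstacle is this freeness step: a priori, the first and second fundamental theorems for $\GL_2$-invariants produce determinantal relations among the bilinear invariants, and one must check that these relations are already accounted for as differential consequences of the chosen generators, rather than yielding additional non-trivial relations at the vertex algebra level. An alternative self-contained route is to compute the character directly via the Weyl integration formula applied to the bigraded character
\begin{align*}
    \ch_{\GL_2}[\sA^2_{\mathrm{deg}} \otimes V_{\Z^2}](z_1,z_2;q) = \prod_{k\geq 1}\prod_{i=1,2}\frac{(1+z_iq^{k-1/2})(1+z_i^{-1}q^{k-1/2})}{(1-z_iq^{k})(1-z_i^{-1}q^{k})},
\end{align*}
and extracting the constant term via Jacobi triple product identities; this bypasses the freeness question and confirms the closed form directly. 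In either route, the final comparison with the right-hand side of the lemma is a matter of rewriting the resulting product in Pochhammer notation.
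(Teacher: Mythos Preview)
Your first route---assuming the strong generators from \cite[Proposition 5.2]{A} are \emph{free} and reading off the product---does not stand on its own. In the paper's logic this lemma is precisely what is used, via character comparison in the subsequent theorem, to establish the isomorphism $\W^{\kk_c}(\sll_{3|2})\simeq (\sA^2_{\mathrm{deg}}\otimes V_{\Z^2})^{\GL_2}$, from which freeness of the $J_{r,p}$ follows a posteriori. So invoking freeness to prove the lemma is circular in context. Even setting that aside, your own attempt to justify freeness collapses into ``compute the Molien--Weyl series and check it matches the product'': that is no longer a proof \emph{via} freeness, it is simply the direct character computation with an unnecessary detour. The concern you raise about determinantal relations from the second fundamental theorem for $\GL_2$ is real and you do not resolve it.

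Your alternative route---Weyl integration applied to the equivariant character---is exactly the paper's approach. The paper writes the $\GL_2$-invariant part as
\[
\int \frac{\dz_1\,\dz_2}{z_1^2 z_2}\,(z_1-z_2)\,
\frac{(-z_1 q^{1/2},-z_1^{-1}q^{1/2},-z_2 q^{1/2},-z_2^{-1}q^{1/2};q)_\infty}{(z_1 q,z_1^{-1}q,z_2 q,z_2^{-1}q;q)_\infty}
\]
and evaluates it not with the Jacobi triple product but with the trigonometric $q$-beta expansion
\[
\frac{(zq^{1/2},z^{-1}q^{1/2};q)_\infty}{(azq^{1/2},bz^{-1}q^{1/2};q)_\infty}
=\sum_{n\in\Z}(-1)^n\frac{(aq^{n+1},bq^{-n+1};q)_\infty}{(q,abq;q)_\infty}\,q^{n^2/2}z^{-n},
\]
applied to each factor $i=1,2$; the two residue terms then combine into the stated product. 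You have identified the correct method but not actually carried out the extraction of the constant term, which is the entire content of the proof.
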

\proof
The weights of the $\GL_2$-action on $\sA^2_{\mathrm{deg}}\otimes V_{\Z^2}$ is given by 
$$\wt(a_i )=\wt(\psi_i)=z_i,\quad \wt(a_i^*)=\wt(\psi_i^*)=-z_i$$
extended through the formulas 
$$\wt(\partial A)=\wt(A),\quad \wt(AB)=\wt(A)+\wt(B).$$
Then the character $\ch[(\sA^2_{\mathrm{deg}}\otimes V_{\Z^2})^{\GL_2}]$ is expressed as
\begin{align*}
    &\ch[(\sA^2_{\mathrm{deg}}\otimes V_{\Z^2})^{\GL_2}]\\
    &=\int \frac{\dz_1\dz_2}{z_1^2z_2^1}(z_1-z_2) \frac{(-z_1q^{1/2},-z_1^{-1}q^{1/2},-z_2q^{1/2},-z_2^{-1}q^{1/2};q)_\infty}{(z_1q,z_1^{-1}q,z_2q,z_2^{-1}q;q)_\infty},
\end{align*}
thanks to Schur's orthogonality theorem of characters.
Now, by using the trigonometric $\beta$-integral
\begin{align*}
    \frac{(zq^{1/2},z^{-1}q^{1/2};q)_\infty}{(azq^{1/2},bz^{-1}q^{1/2};q)_\infty}=\sum_{n\in \Z} (-1)^n\frac{(aq^{n+1},bq^{-n+1};q)_\infty}{(q,abq;q)_\infty}q^{n^2/2}z^{-n},
\end{align*}
the right-hand side is rewritten as follows:
\begin{align*}
    \ch[(\sA^2_{\mathrm{deg}}\otimes V_{\Z^2})^{\GL_2}]
    &=\frac{(-q^{3/2},-q^{3/2};q)_\infty^2}{(q,q^2;q)_\infty^2}-q \frac{(-q^{1/2},-q^{5/2};q)_\infty^2}{(q,q^2;q)_\infty^2}\\
    &=\frac{(-q^{3/2},-q^{3/2},-q^{5/2},q^{5/2};q)_\infty}{(q,q^2,q^2,q^3;q)_\infty}.
\end{align*}
\endproof
Now, we are ready to show Conjecture \ref{conj: reg Wsalg} for $\gl_{3|2}$.
\begin{theorem}
There is an isomorphism of vertex superalgebras
\begin{align*}
    \begin{array}{ccc}
    \Psi\colon \W^{\kk_c}(\sll_{3|2})  & \rightarrow &  (\sA^2_{\mathrm{deg}} \otimes V_{\Z^2})^{\GL_2}
    \end{array}
\end{align*}
such that 
\begin{align*}
    &J\mapsto -J_{0,0},\quad L \mapsto J_{0,1}-\tfrac{1}{4}J_{0,0}^2-\tfrac{1}{2} \partial J_{0,0}, \\
    &S_2 \mapsto -J_{1,0},\quad S_3 \mapsto -\tfrac{9}{2}J_{1,1}+\tfrac{3}{4} \partial J_{1,0} \\
    &s_1^+\mapsto J_{2,0}, \quad s_2^+\mapsto -J_{2,1}+\tfrac{1}{2}J_{0,0}J_{2,0}+\partial J_{2,0},\\
    &s_1^-\mapsto J_{3,0},\quad s_2^-\mapsto J_{3,1}-\tfrac{1}{2}J_{0,0}J_{3,0}.
\end{align*}
Thus, $ \W^{\kk_c}(\gl_{3|2})\simeq  \C[J_\infty \mathbb{A}]\otimes (\sA^2_{\mathrm{deg}} \otimes V_{\Z^2})^{\GL_2}$ holds as vertex superalgebras.
\end{theorem}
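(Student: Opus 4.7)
The plan is to construct $\Psi$ by the explicit formulas given on the free generators $J, L, S_2, S_3, s_1^\pm, s_2^\pm$ of $\W^{\kk_c}(\sll_{3|2})$, to verify that the proposed images satisfy the $\lambda$-brackets of the preceding presentation, and then to conclude isomorphism by a character comparison with Lemma \ref{lem: character formula}. Since $\W^{\kk_c}(\sll_{3|2})$ is freely generated as a differential superalgebra by these strong generators, any consistent assignment of their images into a target vertex superalgebra uniquely extends to a vertex superalgebra homomorphism, so the verification reduces to a finite list of $\lambda$-bracket identities in $\sA^2_{\mathrm{deg}}\otimes V_{\Z^2}$.

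The bracket identities split into easy pieces and one harder piece. The identities involving $J=-J_{0,0}$ simply detect the $\GL_2$-charge of each $J_{p,q}$: $J_{0,q}$ and $J_{1,q}$ have charge $0$, while $J_{2,q}$ and $J_{3,q}$ have charges $+1$ and $-1$, so $\lm{J}{s_i^\pm}=\pm s_i^\pm$ and $\lm{J}{L}=\lm{J}{S_2}=\lm{J}{S_3}=0$ follow by inspection. The brackets involving $L$ amount to checking that its image is a Virasoro element assigning the correct conformal weights and primary-like behaviour to the other generators, which reduces to the standard OPE of a Sugawara-corrected free-fermion stress tensor built from $J_{0,0}, J_{0,1}$. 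Centrality of $S_2=-J_{1,0}$ and of $S_3\propto -J_{1,1}+\tfrac{3}{4}\partial J_{1,0}$ reduces, via the OPE $a_i(z)a_j^*(w)\sim \delta_{ij}/(z-w)$, to a cancellation of double-contraction terms that holds precisely at the critical level.

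The main computational bulk lies in the mixed odd--odd brackets $\lm{s_1^+}{s_1^-}$, $\lm{s_1^+}{s_2^-}$, $\lm{s_2^+}{s_1^-}$ and $\lm{s_2^+}{s_2^-}$, which must reproduce the polynomial expressions in $J, L, S_2, S_3$ prescribed by the presentation. These are computed via Wick's theorem in $\sA^2_{\mathrm{deg}}\otimes V_{\Z^2}$, using the OPEs of $a_i, a_i^*$ and $\psi_i, \psi_i^*$. The non-trivial normal-ordering corrections $\tfrac{1}{2}J_{0,0}J_{2,0}+\partial J_{2,0}$ and $-\tfrac{1}{2}J_{0,0}J_{3,0}$ in the definitions of $s_2^\pm$ are tuned precisely to cancel the derivative and double-contraction terms that would otherwise destroy the prescribed brackets. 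This is the main obstacle: lengthy but mechanical, and best organised by first verifying the identities modulo the Li filtration (i.e.\ in the associated graded Poisson algebra) and then reinstating the subleading normal-ordering terms.

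Once $\Psi$ is known to be a homomorphism, injectivity follows because its symbol on the associated graded is an embedding: the images of the generators become algebraically independent in the associated graded Poisson superalgebra of $(\sA^2_{\mathrm{deg}}\otimes V_{\Z^2})^{\GL_2}$. Surjectivity is then forced by equality of characters, since free generation of $\W^{\kk_c}(\sll_{3|2})$ in conformal weights $(1,\tfrac{3}{2},\tfrac{3}{2},2,2,\tfrac{5}{2},\tfrac{5}{2},3)$ gives
\[
\ch[\W^{\kk_c}(\sll_{3|2})](q)=\frac{(-q^{3/2},-q^{3/2},-q^{5/2},-q^{5/2};q)_\infty}{(q,q^2,q^2,q^3;q)_\infty},
\]
which agrees with $\ch[(\sA^2_{\mathrm{deg}}\otimes V_{\Z^2})^{\GL_2}](q)$ by Lemma \ref{lem: character formula}. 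Finally, the $\gl_{3|2}$ statement follows from $\W^{\kk_c}(\gl_{3|2})\simeq \W^{\kk_c}(\sll_{3|2})\otimes \pi^{\kk_c}_\Omega$ together with the observation that $\lm{\Omega}{\Omega}$ vanishes at the critical level, so $\pi^{\kk_c}_\Omega$ degenerates to the commutative vertex algebra $\C[J_\infty \mathbb{A}]$.
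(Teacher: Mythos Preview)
Your strategy matches the paper's: verify by direct computation that the assignment on generators respects the $\lambda$-brackets of the presentation, then conclude isomorphism via the character identity of Lemma~\ref{lem: character formula}. The only substantive difference is the order in which you pin down bijectivity. The paper first proves \emph{surjectivity}, by observing that the leading terms of the images of $J,L,S_2,S_3,s_i^\pm$ recover all the strong generators $J_{i,j}$ ($i=0,\dots,3$, $j=0,1$) of $(\sA^2_{\mathrm{deg}}\otimes V_{\Z^2})^{\GL_2}$; equality of characters then forces injectivity. You instead argue \emph{injectivity} first via the associated graded, and deduce surjectivity from the character match.

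Both routes are valid in principle, but your injectivity step is the weaker link as written: you assert that the images of the generators become algebraically independent in the associated graded of the target, without justifying it. Since $(\sA^2_{\mathrm{deg}}\otimes V_{\Z^2})^{\GL_2}$ is not freely generated by the $J_{i,j}$ (there are relations among them, cf.\ \cite{A}), this independence is not automatic and would require an explicit check. The paper's route avoids this entirely: surjectivity is immediate from reading off leading terms, and then character equality (with both sides having finite-dimensional graded pieces) gives injectivity for free. You may want to swap to that order.
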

\proof
It is straightforward to show that $\Psi$ is a homomorphism of vertex superalgebras by checking the $\lambda$-brackets. 
As $J_{i,j}$ ($i=0,\cdots,3$, $j=0,1$) strongly generate $(\sA^2_{\mathrm{deg}} \otimes V_{\Z^2})^{\GL_2}$, we obtain that $\Psi$ is surjective by comparing the leading terms of the images.
To show that $\Psi$ is an isomoprhism, we introduce the (quasi-) conformal weights on $\W^k(\sll_{3|2})$ by the formula 
\begin{align*}
 &\Delta(J)=1,\quad \Delta(S_2)=\Delta(L)=2,\quad \Delta(S_3)=3,\\
 &\Delta(s_1^\pm)=3/2,\quad \Delta(s_2^\pm)=5/2.
\end{align*}
Since these strong generators induce the PBW basis, the character is given by
\begin{align*}
    \ch[\W^k(\sll_{3|2},\OO_{[3|2]})]=\frac{(-q^{3/2},-q^{3/2},-q^{5/2},-q^{5/2};q)_\infty}{(q,q^2,q^2,q^3;q)_\infty}.
\end{align*}
It is clear that $\Psi$ preserves the grading and thus the assertion follows since 
\begin{align*}
    \ch[\W^k(\sll_{3|2},\OO_{[3|2]})]=\ch[(\sA^2_{\mathrm{deg}} \otimes V_{\Z^2})^{\GL_2}]
\end{align*}
by Lemma \ref{lem: character formula} and $\Psi$ is surjective.
Now, the assertion for $\W^{\kk_c}(\gl_{3|2})$ is clear as 
$$\W^{\kk_c}(\gl_{3|2})\simeq \C[J_\infty \mathbb{A}]\otimes \W^{\kk_c}(\sll_{3|2}).$$
This completes the proof.
\endproof

\end{document}